\newcommand{\C}{\mathcal{C}}
\newcommand{\PC}{\mathbb{P}\mathcal{C}}
\newcommand{\G}{\mathcal{G}}
\newcommand{\h}{\mathbb{H}^2}
\newcommand{\M}{\mathcal{M}}
\newcommand{\N}{\mathbb{N}}
\newcommand{\p}{\mathcal{P}}
\newcommand{\R}{\mathbb{R}}
\renewcommand{\S}{\mathcal{S}}
\newcommand{\T}{\mathcal{T}}
\newcommand{\Z}{\mathbb{Z}}
\DeclareMathOperator{\sys}{sys}
\DeclareMathOperator{\Col}{Col}
\newtheorem{theorem}{Theorem}[section]
\theoremstyle{plain}
\newtheorem{lem}[theorem]{Lemma}
\newtheorem{cor}[theorem]{Corollary}
\newtheorem{rem}[theorem]{Remark}
\newtheorem{prop}[theorem]{Proposition}
\newtheorem{ques}{Question}
\theoremstyle{definition}
\newtheorem{defi}[theorem]{Definition}
\newtheorem*{MainThm}{Theorem \ref{thm:Main}}
\newtheorem*{ThmShortCurve}{Theorem \ref{thm:Short_curves}} 
\newtheorem*{ThmThickPart}{Theorem \ref{thm:Thick_part_guarantee}}
\newcounter{jenyacomments}
\title{A length comparison theorem for geodesic currents}
\author{Jenya Sapir}
\begin{document}

\begin{abstract}
 We work with the space $\C(S)$ of geodesic currents on a closed surface $S$ of negative Euler characteristic. By prior work of the author with Sebastian Hensel, each filling geodesic current $\mu$ has a unique length-minimizing metric $X$ in Teichm\"uller space. In this paper, we show that, on so-called thick components of $X$, the geometries of $\mu$ and $X$ are comparable, up to a scalar depending only on $\mu$ and the topology of $S$. We also characterize thick components of the projection using only the length function of $\mu$.
\end{abstract}

\maketitle

\section{Introduction}
Let $S$ be a closed, oriented, finite type surface with negative Euler characteristic. The space of geodesic currents, $\C(S)$, contains many of the structures one might wish to study on $S$. For example, it contains the set of closed curves up to homotopy, as well as an embedded copy of Teichm\"uller space, $\T(S)$. These sets are united by an intersection pairing $i(\cdot, \cdot)$ on $\C(S)$. If $\mu, \nu \in \C(S)$ represent two closed curves, then $i(\mu, \nu)$ is just their geometric intersection number. And if $\mu$ represents a metric, while $\nu$ represents a closed curve, then $i(\mu, \nu)$ is the length of the geodesic representative of $\nu$ in the metric $\mu$.

We let $\C_{fill}(S)$ denote the set of \textit{filling} currents, that is, those currents that have positive intersection with all other geodesic currents. Then all currents representing metrics are examples of filling currents. We extend the notion of length function from currents representing metrics to all of $\C_{fill}(S)$. If $\mu \in \C_{fill}(S)$, we define its length function $\ell_\mu : \C(\S) \to \R $  so that
\[
 \ell_\mu(\nu) : = i(\mu, \nu)
\]
for all $\nu \in \C(S)$.

In \cite{HS21}, Hensel and the author show that there is a continuous projection 
\[
 \pi: \C_{fill}(S) \to \T(S)
\]
that minimizes the length of $\mu$ in the sense that $\ell_{\pi(\mu)}(\mu) < \ell_X(\mu)$ for all $X \neq \pi(\mu) \in \T(S)$. We call $\pi(\mu)$ the \textbf{length minimizer} of $\mu$. The goal of this paper is to compare the length function of a filling current $\mu$ to the length function of its length minimizer $\pi(\mu)$. 

To state our result precisely, let $c_b$ be the Bers constant. Then any two curves of length less than $c_b$ with respect to $\pi(\mu)$ are disjoint. Cut $\pi(\mu)$ along all such curves. Then the connected components of the result are the \textbf{thick components} of $\pi(\mu)$. We describe how to identify these components in Theorems \ref{thm:Short_curves} and \ref{thm:Thick_part_guarantee} below.

In \cite{BIPP}, they show that if $\mu \in \C_{fill}(S)$, then its \textbf{systolic length} $\sys(\mu) = \inf_{\alpha \subset S} i(\mu, \alpha)$ is positive, where the infimum is taken over all simple closed curves on $S$. For any subsurface $Y$ of $S$, they also define the $Y$-systolic length to be 
\[
 \sys_Y(\mu)= \inf_{\alpha \subset Y} i(\mu, \alpha)
\]
where the infimum is taken over all essential, non-peripheral simple closed curves in $Y$. Then we show the following:

Then we show that the geometries of $\mu$ and $\pi(\mu)$ are comparable on thick components:
\begin{theorem}
\label{thm:Main}
 Let $\mu \in \C_{fill}(S)$, and let $\pi(\mu)$ be its length minimizer. Let $Y$ be a thick component of $\pi(\mu)$. Then for all essential, non-peripheral simple closed curves $\alpha$ in $Y$,
 \[
\frac{\ell_\mu(\alpha)}{\ell_{\pi(\mu)}( \alpha)} \asymp \sys_Y(\mu)
 \]
 where $\sys_Y(\mu)$ is the $Y$-systolic length of $\mu$, and the constants depend only on the Euler characteristic $\chi(S)$.
\end{theorem}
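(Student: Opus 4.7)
The plan is to reduce Theorem~\ref{thm:Main} to a bounded-geometry comparison on the thick component $Y$: choose a short Bers-style marking of $Y$ to serve as a common reference for both $\pi(\mu)$ and $\mu$, and then show that on this thick part the current $\mu$ is ``uniformly distributed'' at the scale set by $\sys_Y(\mu)$.

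First I would build a marking $M = P \cup T$ of $Y$, with $P$ a short pants decomposition of $Y$ and $T$ a set of dual transversals, every curve of $M$ having $\pi(\mu)$-length bounded by a constant $B = B(\chi(S))$; this is possible because $Y$ is a thick component, so all essential non-peripheral simple closed curves in $Y$ have $\pi(\mu)$-length at least $c_b$. Standard bounded-geometry estimates (each pair of pants has bounded diameter and all twists are controlled by $T$) then give
\[
 \ell_{\pi(\mu)}(\alpha) \asymp \sum_{\gamma\in M} i(\alpha,\gamma)
\]
for every essential non-peripheral simple closed curve $\alpha$ in $Y$, with constants depending only on $\chi(S)$.

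Next I would calibrate $\mu$ against $M$ by proving that $i(\mu,\gamma) \asymp \sys_Y(\mu)$ uniformly for all $\gamma \in M$. The lower bound $i(\mu,\gamma) \geq \sys_Y(\mu)$ is immediate since each $\gamma \in M$ is an essential non-peripheral simple closed curve in $Y$. For the upper bound I would invoke Theorems~\ref{thm:Short_curves} and \ref{thm:Thick_part_guarantee}, which characterize short and thick curves in $\pi(\mu)$ purely in terms of $\ell_\mu$; their contrapositive should force any marking curve $\gamma$ with $i(\mu,\gamma)$ too large relative to $\sys_Y(\mu)$ to become short in $\pi(\mu)$, contradicting $\gamma \subset Y$. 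To pass from the marking to an arbitrary $\alpha$, I would cut $\alpha$ into arcs by $M$, each contained in a bounded-geometry pair of pants of $\pi(\mu)$, and show that the $\mu$-mass of each arc is comparable to the transverse $\mu$-masses on the neighboring curves of $M$. Summing yields
\[
 i(\mu,\alpha) \asymp \sum_{\gamma\in M} i(\alpha,\gamma)\cdot i(\mu,\gamma) \asymp \sys_Y(\mu)\cdot \ell_{\pi(\mu)}(\alpha),
\]
which is the desired estimate.

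The main obstacle is the upper bound $i(\mu,\gamma) \lesssim \sys_Y(\mu)$ on the marking, which is where the minimizing property of $\pi(\mu)$ enters essentially: for a generic $X \in \T(S)$ there is no reason the intersections of $\mu$ with short curves should all agree up to a bounded factor, but the length minimizer should ``average out'' $\mu$ in precisely this way on its thick components. I expect the combinatorial description of short curves provided by Theorems~\ref{thm:Short_curves} and \ref{thm:Thick_part_guarantee} to be exactly what makes this quantitative, and the remaining arc-by-arc comparison in a bounded-geometry pair of pants to be a routine, though non-trivial, transverse-measure estimate.
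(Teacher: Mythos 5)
Your overall skeleton matches the paper's: compare $\ell_{\pi(\mu)}(\alpha)$ to $i(\alpha,\Gamma)$ for a hyperbolically short marking $\Gamma$ of $Y$ (Minsky's estimate), and then calibrate $\mu$ against $\Gamma$ by showing $i(\mu,\gamma)\asymp\sys_Y(\mu)$ for $\gamma\in\Gamma$ (Proposition \ref{lem:Hyp_short_marking_bounded_by_mu_systole}). However, both of the steps you flag as the real work contain genuine gaps as proposed.

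First, the upper bound $i(\mu,\gamma)\prec\sys_Y(\mu)$ does not follow from Theorems \ref{thm:Short_curves} and \ref{thm:Thick_part_guarantee} in the way you suggest. Theorem \ref{thm:Short_curves}(2) concludes that $\alpha$ is short only when $i(\mu,\beta)>N_2\, i(\mu,\alpha)$ holds for \emph{every} $\beta$ crossing $\alpha$; knowing that a single marking curve $\gamma$ has $i(\mu,\gamma)$ large compared to $\sys_Y(\mu)=i(\mu,\gamma_0)$ gives no such universal hypothesis for any curve, and in any case would shorten $\gamma_0$, not $\gamma$. Theorem \ref{thm:Thick_part_guarantee} only produces \emph{some} marking $\M$ of $Y$ with $i(\mu,\cdot)\asymp\sys_Y(\mu)$, which need not be (and in general is not) the hyperbolically bounded marking you use in the Minsky comparison; a $\mu$-short marking can be heavily twisted relative to a $\pi(\mu)$-short one. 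Transferring from $\M$ to your marking $M$ is exactly the content of Theorem \ref{thm:Another_Collar}, which states $i(\mu,\alpha)\Col_{\pi(\mu)}(\alpha)\prec i(\mu,\M)$ for any essential $\alpha$ in $Y$; its proof (together with Lemma \ref{lem:mu-short_marking} and the mixed collar lemma, Proposition \ref{prop:Mixed_Collar_Orthogonal_Arcs}) is where the minimizing property of $\pi(\mu)$ is used via the strip-removal surgery, and it occupies the bulk of the paper. Citing the characterization theorems does not shortcut this: they are themselves proved from the same machinery.

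Second, your arc-by-arc lower bound is false as stated. After normalizing, $i(\alpha,M)\asymp\ell_{\pi(\mu)}(\alpha)$ while $i(\mu,\alpha)\asymp\sys_Y(\mu)\,\ell_{\pi(\mu)}(\alpha)$, so when $\sys_Y(\mu)$ is small most arcs of $\alpha$ in a complementary region of $M$ carry essentially no $\mu$-mass; there is no pointwise comparability of the $\mu$-mass of an arc with the transverse masses of the neighboring marking curves. The paper's lower bound is instead the global inequality $\sys_Y(\mu)^2\, i(\alpha,\gamma)\leq 12\, i(\mu,\alpha)\, i(\mu,\gamma)$ of Lemma \ref{lem:Upper_bound_two_curve_comparison}, proved by locating a subarc of $\alpha$ of small $\mu$-mass that still captures a definite fraction of $i(\alpha,\gamma)$, closing it up with a subarc of $\gamma$ into an essential non-peripheral curve, and invoking the bound $i(\mu,\sigma)\geq\sys_Y(\mu)$ from \cite{BIPP}. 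Only the averaged statement survives, and it requires this surgery argument rather than a local transverse-measure estimate.
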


\subsection{Notation}
Given quantities $A, B$, we use say $A\prec B$ with constants depending only on $C$ if there is a constant $c$ depending only on $C$ so that $A \leq c B$. Likewise, we say $A \succ B$ if $A \geq c' B$ for some $c'>0$ depending only on $C$, and $A \asymp B$ if $A \prec B$ and $B \prec A$.

\subsection{Identifying thick components of $\pi(\mu)$}
We also characterize when curves in $\pi(\mu)$ are short, and when subsurfaces are thick, in terms of the length function of $\mu$.

First, it turns out that a simple closed curve $\alpha$ is short in $\pi(\mu)$ if all simple closed curves $\beta$ crossing $\alpha$ are \textit{relatively} long with respect to $\mu$.
\begin{theorem}
\label{thm:Short_curves}
 For every $\epsilon > 0$, there are constants $N_1, N_2$ so that for any $\mu \in PC_{fill}(S)$, any simple closed curve $\alpha$, and any simple closed curve $\beta$ with $i(\alpha, \beta) \geq 1$,
 \begin{enumerate}
  \item If $\ell_{\pi(\mu)}(\alpha) < \epsilon$, then 
  \[
   i(\mu, \beta) > N_1 i(\mu, \alpha)
  \]
\item If
  \[
   i(\mu, \beta) > N_2 i(\mu, \alpha)
  \]
  then $\ell_{\pi(\mu)}(\alpha) < \epsilon$.
 \end{enumerate}
\end{theorem}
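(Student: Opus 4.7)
The plan is to use the length-minimization property of $X = \pi(\mu)$, combined with hyperbolic collar geometry, via a first-order variational argument in Fenchel-Nielsen coordinates based on a pants decomposition containing $\alpha$. Both parts can be approached from the same circle of ideas, with part (2) handled via the contrapositive.

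For part (1), I would set $\ell := \ell_X(\alpha) < \epsilon$. By the collar lemma, $\alpha$ admits a standard embedded collar $C_\alpha$ in $X$ of half-width $w(\ell) \asymp \log(1/\ell)$. I then consider the one-parameter family of metrics $X_t$ obtained from $X$ by varying the Fenchel-Nielsen length of $\alpha$, holding all other FN coordinates fixed. Minimality of $\ell_\cdot(\mu)$ at $X$ forces $\frac{d}{dt}|_{t=0} \ell_{X_t}(\mu) = 0$. Decomposing $\mu$'s length contribution into an inside-collar and outside-collar piece, and using $w'(\ell) \sim -1/\ell$ as $\ell \to 0$, the dominant terms in this derivative come from two sources: a negative contribution of order $-2i(\mu,\alpha)/\ell$ from transverse crossings (each crossing has length $\asymp 2w(\ell)$, and the collar shrinks as $\ell$ grows), and a positive contribution from pieces of $\mu$ running near-parallel to $\alpha$ inside the collar (whose length scales with $\ell$). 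Setting the variation to zero produces a lower bound on the near-parallel ``winding mass'' $W$ of $\mu$ around $\alpha$, of order $W \succ i(\mu,\alpha)/\ell$. Once $W$ is controlled, any $\beta$ with $i(\alpha,\beta) \geq 1$ must cross each wind of the near-parallel support at least $i(\alpha,\beta)$ times, producing $i(\mu,\beta) \geq W \cdot i(\alpha,\beta) \succ i(\mu,\alpha)/\ell \geq i(\mu,\alpha)/\epsilon$, so $N_1 \asymp 1/\epsilon$ works.

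For part (2), I would argue contrapositively: assuming $\ell_X(\alpha) \geq \epsilon$, I exhibit a simple closed curve $\beta$ with $i(\alpha,\beta) \geq 1$ and $i(\mu,\beta) \leq N_2\, i(\mu,\alpha)$, which suffices because the statement is really a ``for all $\beta$'' assertion. Using a Bers-type bound inside the $\epsilon$-thick region of $X$ containing $\alpha$, I produce such a $\beta$ with $\ell_X(\beta) \leq K(\epsilon,\chi(S))$. Then, either by applying Theorem \ref{thm:Main} to the thick component containing $\alpha \cup \beta$---which yields $i(\mu,\beta)/i(\mu,\alpha) \asymp \ell_X(\beta)/\ell_X(\alpha) \leq K/\epsilon$---or by an independent bounded-geometry comparison of transverse intersection densities of $\mu$ across the two geodesics in this thick region, I obtain the desired bound with $N_2 = N_2(\epsilon,\chi(S))$.

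The delicate step throughout is the winding-mass argument. A generic filling current has no atomic support on the simple curve $\alpha$, so the ``parallel mass'' $W$ is not visible as a coefficient but must be defined as a quantitative measure-theoretic feature of $\mu|_{C_\alpha}$ in the Liouville-current framework. Establishing the precise asymptotic of $\partial \ell_X(\mu)/\partial \ell_\alpha$ as $\ell \to 0$---isolating transverse versus near-parallel contributions and showing the outside-collar remainder is lower order---is where the analysis becomes technical, and will likely require first-variation formulas for length functions of currents along FN deformations, drawing on or refining the infrastructure of \cite{HS21}.
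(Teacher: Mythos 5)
Your overall strategy is genuinely different from the paper's, and both halves have gaps that are not just technical polish. For part (1), the entire argument rests on the asserted decomposition of $\frac{d}{dt}\big|_{t=0}\ell_{X_t}(\mu)$ along the Fenchel--Nielsen length deformation into a transverse term of size $-2i(\mu,\alpha)/\ell$, a ``winding mass'' term, and a lower-order outside-collar remainder. This is obtained by differentiating coarse length estimates (e.g.\ $\ell_X(\beta)\approx 2i(\alpha,\beta)w(\ell)+O(1)$), which is not valid: the first variation of $\ell_X(\gamma)$ in the $\ell_\alpha$-direction is a global quantity (Riera/Wolpert-type formulas), and there is no a priori reason the contribution from outside the collar, or from mass of $\mu$ entering and leaving the collar at intermediate angles or without crossing $\alpha$, is negligible. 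You flag this as ``where the analysis becomes technical,'' but it is the whole proof; nothing in \cite{HS21} or elsewhere in this paper supplies such a first-variation formula for currents. The paper sidesteps differentiation entirely: it performs a finite surgery (removing an $\epsilon$-strip about an orthogonal arc system, Lemma \ref{lem:Length_change_closed}) with explicit two-sided length-change inequalities, and the minimality of $\pi(\mu)$ then yields the mixed collar inequality of Proposition \ref{prop:Mixed_Collar_Orthogonal_Arcs}; part (1) then follows from the collar theorem $i(\mu,\alpha)\Col_{\pi(\mu)}(\alpha)\prec i(\mu,\mathcal M)$ (Theorem \ref{thm:Another_Collar} via Lemma \ref{lem:Thm:short_curve_part1}), giving $N_1\asymp\Col(\alpha)\asymp\log(1/\epsilon)$. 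Note your claimed $N_1\asymp 1/\epsilon$ is strictly stronger than what the paper proves, which should itself make you suspicious that the heuristic is doing unearned work.

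For part (2), your contrapositive reading is the intended one, but the reduction to Theorem \ref{thm:Main} fails in general: knowing only $\ell_{\pi(\mu)}(\alpha)\geq\epsilon$ does not place $\alpha$ inside a thick component --- $\alpha$ could be a Bers curve with $\epsilon\le\ell(\alpha)<c_b$, or a long curve traversing several thin parts --- and Theorem \ref{thm:Main} only applies to curves contained in a single thick component. The fallback ``bounded-geometry comparison of intersection densities'' is not an argument. What is actually needed is a mixed collar inequality valid for \emph{every} simple closed curve $\alpha$: the paper uses Lemma \ref{lem:Shortest_arc_length_bound} to produce an orthogonal arc system $\gamma$ for $\alpha$ with $\ell_{\pi(\mu)}(\gamma)\prec\Col_{\pi(\mu)}(\alpha)+1$ (bounded once $\alpha$ is not short), and Proposition \ref{prop:Mixed_Collar_Orthogonal_Arcs} to convert $\delta(\ell(\gamma))\succ 1$ into $i(\mu,\gamma)\prec i(\mu,\alpha)$, then closes $\gamma$ up into a simple closed curve $\beta$ with $i(\mu,\beta)\prec i(\mu,\gamma)+i(\mu,\alpha)$ (Lemma \ref{lem:Short_curve_guarantee}). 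To repair your proof you would need either to establish your variational formula rigorously or to import the strip-removal machinery; as written, both parts are unproven at their central step.
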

Note that this theorem is a coarse biconditional. The constant $N_1$ grows coarsely like the width of the collar about $\alpha$, while $N_2$ comes from the constant in Theorem \ref{thm:Main}, and is more mysterious.

%

Once we know which curves are short, the following theorem gives a more practical characterization of thick components of $\pi(\mu)$.
\begin{theorem}
\label{thm:Thick_part_guarantee}
 Let $Y \subset \pi(\mu)$ be a subsurface so that $\ell_{\pi(\mu)}(\beta) < c_b$ for each boundary component $\beta$ of $Y$, where $c_b$ is the Bers constant. Then for each essential simple closed curve $\alpha$ in $Y$,
\[
 \ell_{\pi(\mu)}(\alpha) \succ 1
\]
if and only if there exists a marking $\Gamma$ of $Y$ so that
 \[
  i(\mu, \gamma) \asymp \sys_Y(\mu)
 \]
for all $\gamma \in \Gamma$, where all constants depend only on $S$.
\end{theorem}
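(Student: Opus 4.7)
The plan is to prove the two directions separately: the forward direction will reduce to Theorem \ref{thm:Main} and the reverse to Theorem \ref{thm:Short_curves}.

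For the forward direction, I would take the implicit constant in the hypothesis $\ell_{\pi(\mu)}(\alpha)\succ 1$ to be at least $c_b$, so that together with the assumption on $\partial Y$ the hypothesis says precisely that $Y$ is a thick component of $\pi(\mu)$. Standard hyperbolic geometry (Bers' theorem plus a uniform bound on the diameter of the thick part in terms of topology) then provides a marking $\Gamma$ of $Y$ — for instance a Bers pants decomposition of $Y$ together with minimal-length transversals — whose curves all satisfy $\ell_{\pi(\mu)}(\gamma)\asymp 1$ with constants depending only on $\chi(S)$. Applying Theorem \ref{thm:Main} to each $\gamma\in\Gamma$ then yields
\[
 i(\mu,\gamma)=\ell_\mu(\gamma)\;\asymp\;\ell_{\pi(\mu)}(\gamma)\cdot\sys_Y(\mu)\;\asymp\;\sys_Y(\mu),
\]
as required.

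For the reverse direction, I plan to argue by contrapositive. Let $C$ be the upper implicit constant in $i(\mu,\gamma)\asymp\sys_Y(\mu)$, so $i(\mu,\gamma)\leq C\sys_Y(\mu)$ for every $\gamma\in\Gamma$. Choose $\epsilon>0$ small enough that the constant $N_1$ produced by Theorem \ref{thm:Short_curves} at level $\epsilon$ satisfies $N_1>C$; this is possible since $N_1$ grows coarsely like the collar width and hence tends to infinity as $\epsilon\to 0$. Suppose toward contradiction that some essential non-peripheral curve $\alpha\subset Y$ has $\ell_{\pi(\mu)}(\alpha)<\epsilon$. Because $\Gamma$ fills $Y$ and $\alpha$ is essential and non-peripheral in $Y$, some $\gamma\in\Gamma$ must satisfy $i(\gamma,\alpha)\geq 1$ (otherwise $\alpha$ would sit in a complementary disk or boundary annulus of $\Gamma$, contradicting essentiality). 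Theorem \ref{thm:Short_curves}(1) then delivers
\[
 i(\mu,\gamma)\;>\;N_1\cdot i(\mu,\alpha)\;\geq\;N_1\sys_Y(\mu)\;>\;C\sys_Y(\mu),
\]
contradicting the upper bound on $i(\mu,\gamma)$. Hence $\ell_{\pi(\mu)}(\alpha)\geq\epsilon$ for every essential non-peripheral $\alpha$ in $Y$, which is $\succ 1$.

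The real content of the proof sits in the two cited theorems, so the main subtlety I anticipate is purely bookkeeping: calibrating the implicit constant in $\ell_{\pi(\mu)}(\alpha)\succ 1$ to be large enough (at least $c_b$) so that $Y$ is genuinely a thick component and Theorem \ref{thm:Main} applies without further decomposition, and then constructing the short marking on the thick part with constants that depend only on $\chi(S)$. Once those calibrations are in place, both directions are short applications of the prior results.
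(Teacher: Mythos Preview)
Your proposal is correct, and both directions work as you describe. The paper, however, routes the argument through more primitive ingredients rather than the headline theorems.

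For the forward direction, the paper does not invoke Theorem~\ref{thm:Main}; instead it appeals directly to Lemma~\ref{lem:mu-short_marking}, which builds a $\mu$-short marking from scratch (starting at the $Y$-systole of $\mu$ and extending via the mixed collar Proposition~\ref{prop:Mixed_Collar_Orthogonal_Arcs}). Your approach of taking a hyperbolically short marking and applying Theorem~\ref{thm:Main} is cleaner once that theorem is available, but note that Theorem~\ref{thm:Main} itself is proved \emph{using} Lemma~\ref{lem:mu-short_marking} (via Proposition~\ref{lem:Hyp_short_marking_bounded_by_mu_systole}), so you are implicitly invoking the same construction plus extra machinery.

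For the reverse direction, the paper bypasses Theorem~\ref{thm:Short_curves} and applies Theorem~\ref{thm:Another_Collar} directly: from $i(\mu,\alpha)\,\Col_{\pi(\mu)}(\alpha)\prec i(\mu,\Gamma)\asymp \sys_Y(\mu)$ together with $i(\mu,\alpha)\geq \sys_Y(\mu)$ one gets $\Col_{\pi(\mu)}(\alpha)\prec 1$, hence $\ell_{\pi(\mu)}(\alpha)\succ 1$. Your contrapositive via Theorem~\ref{thm:Short_curves}(1) is equivalent in content, since that part of Theorem~\ref{thm:Short_curves} is itself deduced from Theorem~\ref{thm:Another_Collar} (through Lemma~\ref{lem:Thm:short_curve_part1} and Corollary~\ref{cor:Short_curve_part1}); you are just packaging the same collar estimate one level higher. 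The paper's route is more direct and avoids the need to track the growth of $N_1$ as $\epsilon\to 0$.
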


\subsection{Outline and idea of proof}
The paper is organized as follows. 
\begin{enumerate}
\item In Section \ref{sec:First_Thick_Thin}, we prove Proposition \ref{prop:Thick-thin_without_collar_lemma}. This is a preliminary version of Theorem \ref{thm:Main}. It shows that, if $\mu \in \C_{fill}(S)$ and $Y$ is a thick component of $\pi(\mu)$, then 
  \[
   \frac{\sys_Y(\mu)^2} {i(\mu, \Gamma)} \prec \frac{i(\mu, \alpha)}{i(\pi(\mu), \alpha)} \prec i(\mu, \Gamma)
  \]
  where $\Gamma$ is a shortest marking for $Y$ with respect to $\pi(\mu)$ (see Section \ref{sec:First_Thick_Thin} for a definition). We spend the rest of the paper to show that $i(\mu, \Gamma) \asymp \sys_Y(\mu)$.
  
  This result is completely independent from the rest of the paper. In fact, it relies only on analogues of results in \cite{RafiTT}, which we prove for geodesic currents rather than flat structures. 
 \item In Sections \ref{sec:Mixed_collar_idea} - \ref{sec:Mixed_collar_theorem} we prove Theorem \ref{thm:Full_Collar}, giving a mixed collar lemma for geodesic currents and their length minimizers. Roughly, this theorem says the following. Let $\mu \in \C_{fill}(S)$, and let $\alpha$ and $\beta$ be two simple closed curves with $i(\alpha, \beta) \geq 1$. If $\alpha$ is short and $\beta$ is long in $\pi(\mu)$, then $\mu$ must intersect $\beta$ much more than $\alpha$. An idea of the proof is given in Section \ref{sec:Mixed_collar_idea}.
 \item In Sections \ref{sec:Mixed_length_comparisons} and \ref{sec:Another_collar}, we prove a few more mixed collar theorems that follow from Theorem \ref{thm:Full_Collar}.
 \item In Section \ref{sec:Full_Thick_Thin}, we prove Proposition \ref{lem:Hyp_short_marking_bounded_by_mu_systole}, which says that the hyperbolically short marking $\Gamma$ of a thick subsurface $Y$ is actually $\mu$-short:
  \[
   i(\mu, \Gamma) \asymp \sys_Y(\mu)
  \]
The theorem then follows directly from Propositions \ref{prop:Thick-thin_without_collar_lemma} and \ref{lem:Hyp_short_marking_bounded_by_mu_systole}.
\item In Section \ref{sec:When_curves_short}, we prove Theorems \ref{thm:Short_curves} and \ref{thm:Thick_part_guarantee} that show how to identify short curves and thick subsurfaces of $\pi(\mu)$. 
\item Section \ref{sec:Appendix} is an appendix proving a few identities about the collars of geodesics in hyperbolic surfaces.

\end{enumerate}

\subsection{Acknowledgements}
The author would like to thank D\'idac Mart\'inez-Granado, Giuseppe Martone and Sebastian Hensel for many helpful discussions.
\section{Background and connection to related results}

\subsection{Geodesic currents}
Geodesic currents on $S$ were first defined by Bonahon in \cite{Bonahon85} as follows. Fix a complete hyperbolic metric $X$ for $S$. Identify its universal cover with $\h$. Let $\G$ be the set of all unparameterized, unoriented geodesics in $\h$. Each geodesic in $\h$ is uniquely determined by its endpoints on the circle $S^1$ at infinity. Thus $\G$ can be identified with $S^1 \times S^1 \setminus \Delta/ \sim$, where $\Delta$ is the diagonal in $S^1 \times S^1$, and $\sim$ is the relation so that $(a,b) \sim(b,a)$. Since $\pi_1(S)$ acts on $\h$ by isometries, it also acts on $\G$. A \textbf{geodesic current} is a $\pi_1(S)$-invariant, Borel measure on $\G$. We let $\C(S)$ denote the space of currents and endow it with the weak* topology. It turns out that $\C(S)$ is independent of the choice of metric $X$. In fact, the space of currents defined using any other metric $Y$ is H\"{o}lder equivalent to $\C(S)$ \cite{Bonahon85}.

The set of closed geodesics on $X$ embeds into the space of currents as follows. Given any closed geodesic $\gamma$ on $X$, we can lift it to a subset $\tilde \gamma \in \G$. Then the current corresponding to $\gamma$ will be the Dirac measure on $\tilde \gamma$. Abusing notation, we will still let $\gamma$ refer to this current. There is a natural action of $\R^+$ on $\C(S)$ by scaling. It turns out that the $\R^+$ orbit of the set of closed geodesics is dense in $\C(S)$ \cite{Bonahon85}. Moreover, Bonahon also shows that the geometric intersection function $i(\cdot, \cdot)$ extends continuously from pairs of closed geodesics to a bilinear, symmetric intersection form on pairs of currents. 

By work of Bonahon, Teichm\"uller space also embeds in $\C(S)$. Moreover, suppose $\gamma$ is a closed geodesic and $Y \in \T(S)$. We abuse notation slightly, and let $\gamma$ and $Y$ still denote the corresponding currents. Then,
\[
i(\gamma, Y) = \ell_Y(\gamma)
\]
In fact, many spaces of metrics on $S$ embed into $\C(S)$: spaces of singular flat metrics \cite{DLR10}, metrics of variable negative curvature \cite{Otal90}, and others. The embeddings are all characterized by the fact that intersecting a metric with a closed curve $\gamma$ gives the length of the geodesic representative of $\gamma$ with respect to that metric.

\subsection{Relationship to the thick-thin decomposition for quadratic differentials}
This theorem is inspired by an analogous result of Rafi in \cite{RafiTT}. Rafi shows the following. Each holomorphic quadratic differential on $S$ induces a singular flat metric $q$. By \cite{DLR10}, this set of singular flat metrics can be viewed as filling currents in the sense that if $q$ is a flat structure and $\gamma$ is a curve, then $i(q,\gamma)$ is the length of the $q$-geodesic representative of $\gamma$ in $q$. Moreover, $q$ has a unique hyperbolic metric $X$ in its conformal class.  So, we get a projection from singular flat structures (which we will think of as a subset of $\C_{fill}(S)$) to $\T(S)$.

Given a hyperbolic metric $X$ conformally equivalent to a flat structure $q$, Rafi considers the thick components of $X$, defined the same way as above. He shows that if $Y$ is a thick component of $X$, and $\alpha$ is any essential, non-peripheral simple closed curve in $Y$, then, in the language of this paper,
\[
 \frac{\ell_q(\alpha)}{\ell_X(\alpha)} \asymp \sys_Y(q)
\]
where the constants depend only on $\chi(S)$ \cite[Theorem 1.3]{RafiShortCurves}. Thus, the statement of the theorem of Rafi is analogous to ours, although the projections to Teichm\"uller space are different. It would be interesting to see how different the two projections are.
\begin{ques}
 Given a singular flat metric $q$ coming from a holomorphic quadratic differential, let $X$ be the hyperbolic metric in its conformal class, and let $\pi(q)$ be its length minimizing metric. Are $X$ and $\pi(q)$ at a uniformly bounded distance, with respect to some natural metric on $\T(S)$?
\end{ques}

The method of proof of Theorem \ref{thm:Main} is also inspired by the proof in \cite{RafiTT}. Rafi's proof relies crucially on a mixed collar lemma for quadratic differentials, which he proves in \cite{RafiShortCurves}. His theorem says that if $X$ is the hyperbolic metric in the conformal class of a holomorphic quadratic differential $q$, then for any two intersecting simple closed curves $\alpha$ and $\beta$, 
\[
\ell_q(\alpha) \geq d \cdot \ell_q(\beta)
\]
where $d = d(\ell_X(\beta))$ depends only on the length of $\beta$ with respect to $X$, and on $\chi(S)$ \cite[Theorem 1.3]{RafiShortCurves}.

The first half of this paper involves proving an analogous result (Theorem \ref{thm:Full_Collar}). If $\alpha$ and $\beta$ are intersecting simple closed curves, and $\pi(\mu)$ is the length minimizer of some $\mu \in \C_{fill}(S)$, we show that
\[
 \ell_\mu(\alpha) \geq \frac{D}{i(\alpha, \beta)} \ell_\mu(\beta)
\]
where $D = D(\ell_{\pi(\mu)}(\beta))$ depends only on the length of $\beta$ with respect to $\pi(\mu)$.
The factor of $i(\alpha, \beta)$ in the denominator is due to the differences in the methods of proof. We use a careful geometric argument to show how the length of $\mu$ will change if we pinch a metric $X$ along $\alpha$. If $\mu$ intersects $\alpha$ much less than $\beta$, then pinching $\alpha$ increases the length of $\mu$ coming from crossing $\alpha$, while decreasing the contribution to length coming from intersecting $\beta$. Balancing these two effects gives our mixed collar lemma.

We can prove a version of the length comparison theorem (Proposition \ref{prop:Thick-thin_without_collar_lemma}) using techniques similar to those of \cite[Theorem 1]{RafiTT}. We need this result to prove the main theorem, however its statement is rather unsatisfying on its own. To get the full version of Theorem \ref{thm:Main}, we have to work specifically with length minimizing metrics, and the techniques are rather different.

\subsection{Connection to Higher Teichm\"uller Spaces} Giuseppe Martone recently made us aware of a similar length comparison theorem in Higher Teichm\"uller theory. Take the space of representations $\rho: \pi_1(S) \to PSL(3,\R)$. Consider its \textit{Hitchin component} $\T_H$, which is a connected component of discrete, faithful, orientation-preserving representations. There is a map from $\T_H$ to $\C_{fill}(S)$. This map is natural in the sense that, if $\rho \in \C_{fill}(S)$ represents an element of $\T_H$, then for any closed curve $\gamma$, $i(\rho, \gamma)$ gives the so-called Hilbert length of $\gamma$ with respect to $\rho$ \cite{BCL,MZ}.

Labourie \cite{Labourie_projection} and Loftin \cite{Loftin} showed independently that there is a mapping class group-invariant projection from $\T_H$ to Teichm\"uller space. This is part of a much larger, quite active research program: see \cite[Conjecture 14]{Wienhard} for an overview of the broader context. 

As noted in \cite[Lemma 5.1]{DM20}, work of Tholozan \cite[Theorem 3.9, Corollary 3.10]{Tholozan} implies a length comparison result for all but a bounded set of representations $\rho \in \T_H$ with projection $X \in \T(S)$. For all (not necessarily simple) closed curves $\gamma$ on $S$,
\[
\frac{\ell_\rho(\gamma)}{\ell_X(\gamma)} \asymp \frac{1}{h(\rho)}
\]
 where $h(\rho)$ is the topological entropy of the Hilbert length of $\rho$, and the length functions are the ones defined above for the associated currents. In fact, it follows from \cite[Theorem 1.4, Corollary 1.5]{MZ} that $\sys(\rho) \prec 1/h(\rho) \prec \sys_Y(\rho)$ for a thick subsurface $Y$ of $\pi(\rho)$. The upper bound is not stated this way in their paper, but it can be deduced using Theorem \ref{thm:Full_Collar} that the function $K_\rho$ in \cite{MZ} is the $Y$-systole of $\rho$ on a certain subsurface $Y$. Thus, we get the inequality
 \[
  \sys(\rho) \prec \frac{\ell_\rho(\gamma)}{\ell_X(\gamma)} \prec  \sys_Y(\rho)
 \]
where the lower bound is the systole of $\rho$ on all of $S$, and the upper bound is the $Y$-systole for a certain $Y \subset S$.
 
It should be noted that the constants in this inequality depend on the projection $X$ of $\rho$. It would be interesting to see if one could state this theorem with constants that depend only on the topology of $S$, and simultaneously tighten the two bounds to get a statement similar to Theorem \ref{thm:Main}. Moreover, it would again be interesting to know if this projection of a higher Teichm\"uller space to $\T(S)$ is bounded distance to the length minimizing projection, with respect to some metric on $\T(S)$.

\section{A first length comparison}
\label{sec:First_Thick_Thin}
This section is independent of the rest of the paper. Given a filling current $\mu$ and a closed curve $\alpha$, we re-interpret $i(\mu, \alpha)$ as the $\mu$-length of $\alpha$. Suppose the length minimizer of $\mu$ is $\pi(\mu)$. Then even without using the mixed collar lemma, we can compare the $\mu$- and $\pi(\mu)$- lengths of simple closed curves that lie in \textit{thick components} of $\pi(\mu)$, which we define now.

We can choose a shortest marking $\Gamma = \{\gamma_1, \dots, \gamma_n\}$ for $\pi(\mu)$ as follows: First, we choose a shortest pants decomposition for $\pi(\mu)$ using the greedy algorithm. That is, we let $\gamma_1$ be the shortest simple closed curve on $\pi(\mu)$, then given $\gamma_1, \dots, \gamma_i$, we let $\gamma_{i+1}$ be the shortest curve in the complement of $\gamma_1, \dots, \gamma_i$. This gives a pants decomposition $\gamma_1, \dots, \gamma_m$ for $S$. Then we choose the shortest transverse curves $\gamma_{m+1}, \dots, \gamma_{2m}$ so that if $i = j$ then $i(\gamma_m+i, \gamma_j) = 1$ or 2, and if $i \neq j$, then $i(\gamma_m+i, \gamma_j) = 0$, and so that $\tau_{\gamma_i}(\gamma_{m+i})$ is minimal.

Let $c_b$ be the Bers constant for $S$. That is, all closed curves shorter than $c_b$ are simple and disjoint. The curves that are shorter than $c_b$ in $\pi(\mu)$ will be called the \textbf{Bers curves}. Cut $\pi(\mu)$ along the Bers curves, and let $Y$ be a connected component of the result. We call $Y$ a \textbf{thick component} of $X$. Note that by taking the curves in $\Gamma$ that are essential and non-peripheral in $Y$, we get the shortest marking on $Y$. We call this the shortest marking on $\pi(\mu)$ restricted to $Y$. Note that this marking may be empty if $Y$ is a pair of pants.

Following the notation of \cite{BIPP}, we can define the systolic length of $\mu$ in any subsurface $Y$ of $S$. We let
\[
 \sys_Y(\mu) = \inf_{\alpha \subset Y} i(\mu, \alpha)
\]
where the infimum is taken over all essential, non-peripheral simple closed curves in $Y$.

\begin{prop}
\label{prop:Thick-thin_without_collar_lemma}
Let $\mu \in \C_{fill}(S)$, and let $\pi(\mu)$ be its length minimizer. Let $Y$ be a thick component of $\pi(\mu)$, and let $\Gamma$ be the shortest marking on $Y$. Then, for any essential, non-peripheral simple closed curve $\alpha$ in $Y$, we have 
 \[
 \frac{\sys_Y(\mu)^2} {i(\mu, \Gamma)} \prec \frac{i(\mu, \alpha)}{i(\pi(\mu), \alpha)} \prec i(\mu, \Gamma)
 \]
 where the constants depend only on $\chi(S)$.
\end{prop}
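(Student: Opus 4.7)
The plan is to adapt the thick-thin argument of Rafi \cite[Theorem 1]{RafiTT} to the current-theoretic setting, using the bilinearity and continuity of the intersection form on $\C(S)$ in place of flat-structure geometry. I would begin by recording the standard hyperbolic facts about a thick component $Y$: each curve in the shortest marking $\Gamma$ restricted to $Y$ has $\pi(\mu)$-length $\asymp 1$, and every essential non-peripheral simple closed curve $\alpha$ in $Y$ satisfies $i(\pi(\mu), \alpha) = \ell_{\pi(\mu)}(\alpha) \asymp i(\alpha, \Gamma)$ with constants depending only on $\chi(S)$. After dividing by $i(\pi(\mu), \alpha)$, the proposition reduces to the two combinatorial bounds
\[
i(\mu, \alpha) \prec i(\alpha, \Gamma) \cdot i(\mu, \Gamma) \quad \text{and} \quad i(\mu, \alpha) \cdot i(\mu, \Gamma) \succ \sys_Y(\mu)^2 \cdot i(\alpha, \Gamma).
\]

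For the upper bound, I would cut $\alpha$ into its $\asymp i(\alpha, \Gamma)$ subarcs at intersections with $\Gamma$. Each subarc has bounded $\pi(\mu)$-length and, because $Y$ is thick, is homotopic rel endpoints into a uniformly bounded neighborhood of a single marking curve $\gamma \in \Gamma$. Using bilinearity of intersection together with the density of weighted simple closed curves in $\C(S)$, each subarc contributes $\prec i(\mu, \gamma) \leq i(\mu, \Gamma)$ to $i(\mu, \alpha)$, and summing gives the upper bound. This step is the part of the argument most directly parallel to Rafi's.

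For the lower bound, I would perform a dual surgery: each subarc of $\alpha$ cut by $\Gamma$ can be closed up, together with a bounded segment of a marking curve, into a simple closed curve $\beta$ in $Y$ with $i(\mu, \beta) \geq \sys_Y(\mu)$. Summing the $\asymp i(\alpha, \Gamma)$ such inequalities while tracking how many times each marking segment is reused yields a relation of the form $i(\mu, \alpha) + \lambda \cdot i(\mu, \Gamma) \succ \sys_Y(\mu) \cdot i(\alpha, \Gamma)$, with the reuse factor $\lambda$ controlled by $i(\mu, \Gamma)/\sys_Y(\mu)$. Combining this with the trivial bound $i(\mu, \Gamma) \geq \sys_Y(\mu)$ produces the desired product inequality. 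The main obstacle will be this lower bound: Rafi's flat-metric argument uses extremal-length estimates and saddle-connection geometry that have no direct analogue for a general current, so the bookkeeping for how marking segments are apportioned among the surgered curves $\beta$ must be redone from scratch using only the bilinear intersection form and the thick-component geometry of $\pi(\mu)$.
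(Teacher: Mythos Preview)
Your upper bound is essentially the paper's Lemma \ref{lem:Lower_bound_Gamma_intersection}: homotope $\alpha$ onto the graph of $\Gamma$ and bound its $\mu$-length by $i(\alpha,\Gamma)\cdot i(\mu,\Gamma)$. The phrasing ``homotopic into a neighborhood of a \emph{single} marking curve'' is not literally correct---a subarc lies in a complementary region of $\Gamma$ and is homotopic onto a concatenation of arcs from possibly several marking curves---but the conclusion survives once stated this way, and the reduction to $i(\pi(\mu),\alpha)\asymp i(\alpha,\Gamma)$ via Minsky's estimate is exactly what the paper does.

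Your lower bound, however, has a genuine gap. A subarc of $\alpha$ cut by $\Gamma$ lies in a complementary region of $\Gamma$ in $Y$, and these regions are disks or boundary-parallel annuli. Closing such a subarc with a segment of $\partial(\text{region})$ therefore yields a curve that is null-homotopic or peripheral in $Y$, so the inequality $i(\mu,\beta)\geq \sys_Y(\mu)$ fails precisely where you need it. No amount of bookkeeping on the reuse factor $\lambda$ repairs this, because the surgered curves carry no systolic information.

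The paper's Lemma \ref{lem:Upper_bound_two_curve_comparison} avoids this by a different decomposition. Fix a single $\gamma_i\in\Gamma$ and cut $\alpha$ not along $\Gamma$ but into $m\asymp i(\mu,\alpha)/\sys_Y(\mu)$ arcs each of $\mu$-length at most $\tfrac12\sys_Y(\mu)$. One such arc $\bar\alpha$ carries $\geq i(\alpha,\gamma_i)/m$ intersections with $\gamma_i$; cutting $\gamma_i$ at the same-sign intersections yields a short arc $\bar\beta$, and the concatenation $\sigma=\bar\alpha\cup\bar\beta$ is essential and non-peripheral because the sign condition forces $i(\sigma,\alpha)\geq 1$. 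Now $\sys_Y(\mu)\leq i(\mu,\sigma)\leq \tfrac12\sys_Y(\mu)+i(\mu,\bar\beta)$ gives $\sys_Y(\mu)^2\, i(\alpha,\gamma_i)\prec i(\mu,\alpha)\,i(\mu,\gamma_i)$ directly, and summing over $\gamma_i\in\Gamma$ finishes. The two ideas you are missing are (i) partition $\alpha$ by $\mu$-length rather than by $\Gamma$-intersections, and (ii) use the sign of intersections with $\alpha$ to certify that the surgered curve is essential.
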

 This proposition follows directly from the lemmas in this section. We will first prove this proposition assuming the lemmas, then give the proofs of the lemmas after. 
\begin{proof}
To show the upper bound, we will show in Lemma \ref{lem:Lower_bound_Gamma_intersection} that 
 \[
  i(\alpha, \mu) \leq 2i(\mu, \Gamma) i(\alpha, \Gamma) 
 \]
 Then, just as in \cite{RafiTT}, we use the upper bound in \cite[Equation 4.1]{Minsky93}, which says $i(\alpha, \Gamma) \prec \ell_{\pi(\mu)}(\alpha)$, where the constant depends on the $\pi(\mu)$-lengths of the curves in $\Gamma$. As $\Gamma$ is a marking for a thick component of $\pi(X)$, the lengths of the curves are all bounded in terms of $\chi(S)$. Thus, we get 
 \[
  i(\alpha, \mu) \prec i(\mu, \Gamma) i(\pi(\mu), \alpha)
 \]
where the constants depend only on $\chi(S)$.  We then rearrange this inequality into the upper bound we need.

For the lower bound, we show in Lemma \ref{lem:Upper_bound_two_curve_comparison} that if $\gamma_i$ is a curve in $\Gamma$, then 
\[
  \frac{1}{12} \sys_Y(\mu)^2 i(\alpha, \gamma_i) \leq i(\mu, \alpha) i(\mu, \gamma_i) 
\]
Summing this over all curves in $\Gamma$ gives 
\[
  \frac{1}{12} \sys_Y(\mu)^2 i(\alpha, \Gamma) \leq i(\mu, \alpha) i(\mu, \Gamma) 
\]
This time we use the lower bound in \cite[Equation 4.1]{Minsky93}, which says $\ell_{\pi(\mu)}(\alpha)\prec i(\alpha, \Gamma)$, where again the fact that $Y$ is a thick component of $\pi(\mu)$ mean the constants depend only on $\chi(S)$. This gives us
\[
 \sys_Y(\mu)^2 i(\alpha, \pi(\mu)) \prec i(\mu, \alpha) i(\mu, \Gamma) 
\]
Again, rearranging this inequality gives us the lower bound we need.
\end{proof}

\subsection{Upper bound}
We will use the following lemma for the upper bound in Proposition \ref{prop:Thick-thin_without_collar_lemma}:

\begin{lem}
\label{lem:Lower_bound_Gamma_intersection}
 If $\Gamma$ fills a subsurface $Y$ of $\pi(\mu)$, and $\alpha$ is an essential simple closed curve in $Y$, then
 \[
  i(\alpha, \mu) \leq 2  i(\mu, \Gamma) i(\alpha, \Gamma)
 \]
\end{lem}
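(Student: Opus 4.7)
The plan is to bound $i(\mu,\alpha)$ by realizing $\alpha$ as a specific closed-curve representative supported on $\Gamma$, and then estimating its $\mu$-mass edge by edge.

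First I would take a taut transverse representative of $\alpha$ (say, the $\pi(\mu)$-geodesic) and let $\alpha\cap\Gamma=\{p_1,\dots,p_k\}$ with $k=i(\alpha,\Gamma)$. These points cut $\alpha$ into arcs $a_1,\dots,a_k$, and each $a_j$ lies in a single connected component $P_j$ of $Y\setminus\Gamma$. The assumption that $\Gamma$ fills $Y$ says exactly that each $P_j$ is a topological disk whose boundary is a polygonal path in $\Gamma$, so a standard disk-homotopy inside $P_j$ homotopes $a_j$ rel endpoints to an arc $b_j\subset\partial P_j\subset\Gamma$; by selecting the shorter of the two boundary arcs from $p_j$ to $p_{j+1}$ along $\partial P_j$, I can arrange that each $b_j$ traverses every edge of the graph $\Gamma$ at most once.

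Concatenating the $b_j$'s (with a small transverse perturbation off the self-intersections of $\Gamma$) produces a closed curve $\tilde\alpha$ freely homotopic to $\alpha$ and supported on $\Gamma$. The basic comparison principle for geodesic currents --- that for any representative $\tilde\alpha$ of the homotopy class of $\alpha$ and $\mu$-almost every $g\in\G$, one has $\#(\tilde\alpha\cap g)\geq i_{\mathrm{geom}}([\alpha],g)$ --- then gives, upon integration against $\mu$,
\[
 i(\mu,\alpha)\;\leq\;\int_\G \#(\tilde\alpha\cap g)\,d\mu(g)\;=\;\sum_{j=1}^{k}\int_\G \#(b_j\cap g)\,d\mu(g).
\]
Since $b_j\subset\Gamma$ traverses each edge of $\Gamma$ at most once, the pointwise bound $\#(b_j\cap g)\leq\#(\Gamma\cap g)$ holds, and integrating gives $\int_\G\#(b_j\cap g)\,d\mu(g)\leq\sum_i i(\mu,\gamma_i)=i(\mu,\Gamma)$. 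Summing over $j=1,\dots,k$ yields $i(\mu,\alpha)\leq k\cdot i(\mu,\Gamma)=i(\alpha,\Gamma)\,i(\mu,\Gamma)$; the factor $2$ in the statement is slack absorbing the transverse perturbation of $\tilde\alpha$ off $\Gamma$ and possible double-counting near the $p_j$.

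The step I expect to require the most care is verifying that the concatenation $\tilde\alpha=b_1\cdots b_k$ really lies in the same free homotopy class as $\alpha$, rather than being some other loop on $\Gamma$ with the same crossing pattern. This is immediate once the arc-level homotopies inside the disks $P_j$ are assembled: since each $a_j$ is homotopic to $b_j$ rel endpoints and the endpoints match up at the $p_j$'s, composing these disk homotopies produces a free homotopy from $\alpha$ to $\tilde\alpha$ inside $Y$. A secondary subtlety is how to treat complementary regions $P_j$ that are collar neighborhoods of $\partial Y$ (when $Y$ has boundary); these are handled by treating $\partial Y$-arcs of $\partial P_j$ on the same footing as edges of $\Gamma$, contributing a bounded multiplicative overhead that is already absorbed in the factor of $2$.
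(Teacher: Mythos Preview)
Your approach is the same as the paper's: push $\alpha$ onto the graph $\Gamma$ arc by arc through the complementary regions, then bound the $\mu$-mass of the resulting representative. Two points deserve tightening.

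First, the assertion that by taking the ``shorter'' of the two boundary arcs each $b_j$ traverses every edge of $\Gamma$ at most once is not correct: an edge of $\Gamma$ can appear on $\partial P_j$ twice (when $P_j$ lies on both sides of it), and those two occurrences can both fall into the half you pick. This is exactly where the factor $2$ comes from in the paper's proof---it is not slack from a perturbation. The clean bookkeeping is: $b_j$ passes through each point of $\partial P_j$ at most once, and each point of $\Gamma$ lies on $\partial P_j$ at most twice, so $b_j$ passes through each point of $\Gamma$ at most twice, giving $i(\mu,b_j)\le 2\,i(\mu,\Gamma)$.

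Second, your handling of the non-disk regions is off. The complementary components that are not disks are boundary-parallel annuli, and an arc of $\alpha$ in such an annulus has both endpoints on the $\Gamma$-side (since $\alpha$ does not meet $\partial Y$). The point is not to treat $\partial Y$-arcs as extra edges; rather, because $\alpha$ is \emph{simple}, each such arc is isotopic rel endpoints to an arc in the $\Gamma$-boundary of the annulus that traverses each point at most once. The paper invokes simplicity precisely here; without it an arc could wind around the annulus and the multiplicity bound would fail.

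With these two fixes your argument matches the paper's.
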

\begin{proof}
Let $\alpha$ be a simple closed curve in $Y$. We will homotope $\alpha$ into a closed curve $\alpha'$ that lies entirely in the graph of $\Gamma$, so that $\alpha'$ passes through each point of $\Gamma$ at most $2 i(\alpha, \Gamma)$ times. This will imply that $i(\mu, \alpha') < 2 i(\alpha, \Gamma)  i(\mu, \Gamma)$. But by the same proof as in \cite[Lemma 4.4]{MZ}, $i(\mu, \alpha) < i(\mu, \alpha')$, so this will complete the proof.

The marking $\Gamma$ cuts $Y$ into simply connected regions, and boundary-parallel annuli. Now, $\Gamma$ cuts $\alpha$ into $i(\alpha, \Gamma)$ arcs, where each arc lies in one of the regions of $Y \setminus \Gamma$. Let $\bar \alpha$ be one such arc lying in a region $D$. 

Then $\bar \alpha$ is homotopic relative its endpoints to a concatenation of the arcs in $\partial D$ which we denote $\bar \alpha'$. Note that $\bar \alpha'$ passes through each point in $\partial D$ at most once. In the case where $D$ is simply connected, this is always the case. In the case where $D$ is an annulus, this follows from the fact that $\alpha$ is a simple closed curve. In either case, this implies that $\bar \alpha'$ passes through each point in $\Gamma$ at most twice. 

Homotoping each segment of $\alpha \setminus \Gamma$ in this way, we get a closed curve $\alpha'$ lying entirely in the graph of $\Gamma$. As each arc passes through each point of $\Gamma$ at most twice, and we have $i(\alpha, \Gamma)$ arcs, we get that $\alpha'$ passes through each point of $\Gamma$ at most $2i(\alpha, \Gamma)$ times. Therefore, $i(\alpha, \mu) \leq 2  i(\mu, \Gamma) i(\alpha, \Gamma)$, as desired.
\end{proof}

\subsection{Lower bound}
The following lemma is equivalent to \cite[Lemma 5]{RafiTT}. The author would like to thank Sebastian Hensel for discussions that led to the proof. 
\begin{lem}
\label{lem:Upper_bound_two_curve_comparison}
 Let $\alpha, \beta$ be two essential, non-peripheral closed curves in $Y$, so that $\alpha$ is simple. Then, 
 \[
  \sys_Y(\mu)^2 i(\alpha, \beta) \leq 12 i(\mu, \alpha) i(\mu, \beta) 
 \]
\end{lem}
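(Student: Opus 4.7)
The plan is to adapt the partition-and-count strategy of \cite[Lemma 5]{RafiTT} from flat metrics to currents: divide $\alpha$ and $\beta$ into sub-arcs of $\mu$-weight on the order of $\sys_Y(\mu)$, argue that any two such short sub-arcs intersect at most a uniformly bounded number of times, and then sum to bound $i(\alpha,\beta)$.

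Concretely, I would place $\alpha$ and $\beta$ in minimal position so that $|\alpha\cap\beta|=i(\alpha,\beta)=:n$ transversely. Since $\alpha$ is simple, essential, and non-peripheral in $Y$, the bound $\sys_Y(\mu)\leq i(\mu,\alpha)$ is immediate. For $\beta$, iteratively smoothing self-intersections produces a simple essential non-peripheral curve of no greater $\mu$-mass, so $\sys_Y(\mu)\leq i(\mu,\beta)$ as well. Partition $\alpha$ into sub-arcs $a_1,\dots,a_k$ with $i(\mu,a_i)\leq\sys_Y(\mu)$ and $k\leq 2\,i(\mu,\alpha)/\sys_Y(\mu)$, and similarly partition $\beta$ into $m\leq 2\,i(\mu,\beta)/\sys_Y(\mu)$ sub-arcs $b_1,\dots,b_m$.

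The technical heart is a uniform bound $|a_i\cap b_j|\leq 3$. If $a=a_i$ and $b=b_j$ met at $N$ points, each consecutive pair of crossings along $a$ would bound a bigon-type loop obtained by concatenating a sub-arc of $a$ with a sub-arc of $b$. Minimal position of $\alpha,\beta$ in $Y$ ensures none of these $N-1$ loops is null-homotopic. A topological argument, controlling how many can be peripheral in terms of the bounded number of boundary components of $Y$, shows that once $N$ is large enough at least one bigon loop is essential and non-peripheral, with $\mu$-mass at most $(i(\mu,a)+i(\mu,b))/(N-1)\leq 2\sys_Y(\mu)/(N-1)$. For this to be at least $\sys_Y(\mu)$, one needs $N\leq 3$. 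Summing over all pairs then gives
\[
n=\sum_{i,j}|a_i\cap b_j|\leq 3km \leq 12\,\frac{i(\mu,\alpha)\,i(\mu,\beta)}{\sys_Y(\mu)^2},
\]
which is the desired inequality.

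The main obstacle will be the uniform bound $|a_i\cap b_j|\leq 3$. The arrangement of crossings on two short sub-arcs need not be alternating, so the bigon-type loops come in a variety of combinatorial patterns, and identifying which of them are guaranteed to be essential and non-peripheral requires careful topological bookkeeping. Ruling out the extreme case where many bigon loops land in the peripheral subgroup of $\pi_1(Y)$ — and doing so with an absolute constant rather than one depending on $\chi(S)$ — is the most delicate ingredient.
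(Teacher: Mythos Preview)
Your overall strategy---partition into short sub-arcs and bound pairwise crossings---is different from the paper's, and as written it has a real gap in the ``technical heart.''

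The averaging claim ``$\mu$-mass at most $(i(\mu,a)+i(\mu,b))/(N-1)$'' is not justified. If $p_1,\dots,p_N$ are the crossings listed in order along $a$, then the sub-arcs of $a$ between consecutive $p_i$'s do partition $a$, but the matching sub-arcs of $b$ (from $p_i$ to $p_{i+1}$ along $b$) need \emph{not} partition $b$: the points $p_i$ can appear in an arbitrary order along $b$, so these $b$-arcs can overlap heavily and their total $\mu$-mass can far exceed $i(\mu,b)$. Thus the average mass of your bigon loops is not controlled by $(i(\mu,a)+i(\mu,b))/(N-1)$. If instead you take crossings consecutive along $b$, you get one $b$-arc of mass $\le i(\mu,b)/(N-1)$, but you must then use all of $a$ to close up, giving a loop of mass at most $\sys_Y(\mu)\cdot\frac{N}{N-1}$, which never drops below $\sys_Y(\mu)$. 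To make this work you would need to partition into arcs of mass at most $\tfrac12\sys_Y(\mu)$, not $\sys_Y(\mu)$---and then your constants change.

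The non-peripherality issue is also more serious than you suggest. The paper does \emph{not} argue by counting how many loops can be peripheral; instead it partitions only $\alpha$ into arcs of $\mu$-mass $\le\tfrac12\sys_Y(\mu)$, picks the arc $\bar\alpha$ carrying the most crossings with $\beta$, and then orients and assigns signs to those crossings. Taking two consecutive crossings \emph{of the same sign} along $\beta$ gives a sub-arc $\bar\beta$ such that $\sigma=\bar\alpha\cup\bar\beta$ has $i(\sigma,\alpha)\ge 1$ essentially---this is what forces $\sigma$ to be non-peripheral, with no dependence on the number of boundary components. A short case analysis then handles the possible internal intersections of $\bar\alpha$ and $\bar\beta$. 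Your pairwise-arc approach has no analogue of this sign trick, and an absolute bound of $3$ on $|a_i\cap b_j|$ that avoids any $\chi$-dependence does not follow from the argument you sketched.
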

Note that $\beta$ is allowed to be non-simple.
\begin{proof}

 Let $m = \lceil 2 \frac{i(\alpha, \mu)}{\sys_Y(\mu)} \rceil$. We will show that there is a closed segment $\bar \alpha$ of $\alpha$ that contains at least $\frac 1m i(\alpha, \beta)$ of the intersections between $\alpha$ and $\beta$, and so that the $\mu$-length of its interior satisfies $i(\bar \alpha^\circ, \mu) \leq \frac 12 \sys_Y(\mu)$. 
 
 \begin{figure}[h!]
  \centering 
  \includegraphics{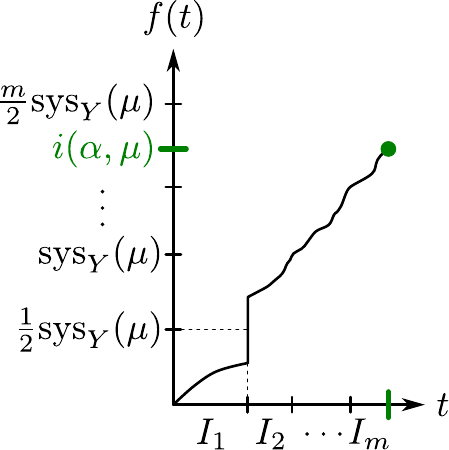}
  \caption{The graph of $f(t) = i(\mu, \alpha[0,t])$}
  \label{fig:Cutting_up_alpha}
 \end{figure}

 In fact, fix a parametrization $\alpha: [0,1] \to S$ for $\alpha$. Let $f(t) = i(\mu, \alpha[0,t])$ (Figure \ref{fig:Cutting_up_alpha}). Then $f$ is an increasing function with image $[0, i(\mu, \alpha)]$. Moreover, $f$ is continuous away from at most a countable set of points. For each $i = 1, \dots, m$, let $I_i$ be the closure of $f^{-1}[\frac{i-1}{2}\cdot \sys_Y(\mu), \frac i2\cdot \sys_Y(\mu)]$. Then 
 \[
  i(\mu,\alpha(I_i^\circ)) \leq \frac 12 \sys_Y(\mu))
 \]
That is, the interior of each arc $\alpha(I_i)$ has $\mu$-length at most $\frac 12 \sys_Y(\mu)$. Note that we have to take the interior of $I_i$ because $f$ might not be continuous at the endpoints of $I_i$.

We have cut $\alpha$ into $m$ arcs. Therefore, there is some $i$ so that the arc $\bar \alpha = \alpha(I_i)$ contains $n \geq \frac 1m i(\alpha, \beta)$ intersections with $\beta$. 

Thus at this stage, we have defined $m, n$ and $\bar \alpha$ so that
\begin{align*}
 m & = \left \lceil 2 \frac{i(\alpha, \mu)}{\sys_Y(\mu)} \right \rceil \\
 n & = i(\bar \alpha, \beta) \geq \frac 1m i(\alpha, \beta) \\
 i(\bar \alpha, \mu) & \leq \frac 12 \sys_Y(\mu)
\end{align*}

Fixing an orientation for $\bar \alpha$ and $\beta$ we can give a sign to each intersection between them. We will say that an intersection is positive if $\beta$ crosses $\bar \alpha$ from right to left, and negative otherwise (Figure \ref{fig:alphabeta}). Without loss of generality, there are $k \geq n/2$ positive intersections. The positive intersections divide $\beta$ into arcs $\beta_1, \dots, \beta_k$. Then since $\sum i(\beta_i, \mu) = i(\beta, \mu)$, there must be some $\beta_i$ so that $i(\beta_i, \mu) \leq i(\beta, \mu)/k$. (Here we use that no intersection between $\alpha$ and $\beta$ is an atom of $\mu$.) So if $\bar \beta = \beta_i$, we have 
\[
i(\bar \beta, \mu) \leq 2i(\beta, \mu)/n
\]
Note that there are no positive intersections of $\bar \alpha$ with $\beta$ along $\bar \beta$, but there may be many positive intersections of $\bar \alpha$ and $\beta$ along $\bar \alpha$.
\begin{figure}[h!]
 \centering 
 \includegraphics{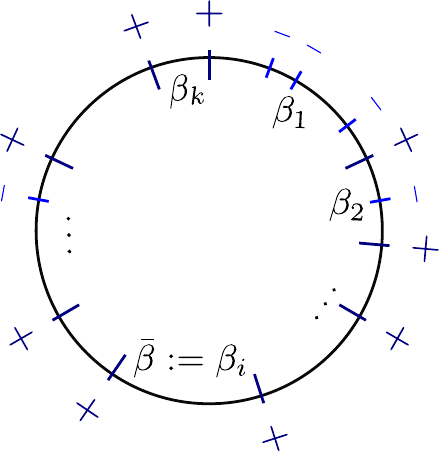}
 \caption{The arc $\bar \alpha$ (in blue) cuts $\beta$ (in black) into a collection of arcs.}
 \label{fig:alphabeta}
\end{figure}

We will now find an essential, non-peripheral closed curve $\sigma$ so that $i(\sigma, \mu)\leq i(\bar \alpha, \mu) + i(\bar \beta, \mu)$. (The reason for considering the signs of intersections between $\alpha$ and $\beta$ is precisely to ensure that $\sigma$ is non-peripheral.)

Once we find $\sigma$, the proof will proceed as follows. They show in \cite{BIPP} that $i(\mu, \nu) \geq \sys_Y(\mu)$ for any geodesic current $\nu$ whose support lies entirely in $Y$. Thus we will have
\begin{align*}
 \sys_Y(\mu) &\leq i(\sigma, \mu) \\
  & \leq i(\bar \alpha, \mu) + i(\bar \beta, \mu) \\
  & \leq \frac 12 \sys_Y(\mu)+ 2i(\beta, \mu)/n  \\
  & \leq \frac 12 \sys_Y(\mu)  + 2\frac{m}{i(\alpha, \beta)} \cdot i(\beta, \mu)
\end{align*}
Rearranging this will give 
\[
\sys_Y(\mu) i(\alpha, \beta)\leq 4m \cdot i(\beta, \mu)
\]
As $m = \left \lceil 2 \frac{i(\alpha, \mu)}{\sys_Y(\mu)} \right \rceil$ and $i(\alpha, \mu) \geq \sys_Y(\mu)$, we have $m \leq 3\frac{i(\alpha, \mu)}{\sys_Y(\mu)}$. And so we get 
\[
 \sys_Y(\mu)^2 i(\alpha, \beta) \leq 12i(\beta, \mu)i(\alpha, \mu)
\]

The construction of $\sigma$ involves reducing to the two situations in Figure \ref{fig:alpha1}.
\begin{figure}[h!]
 \centering 
 \includegraphics{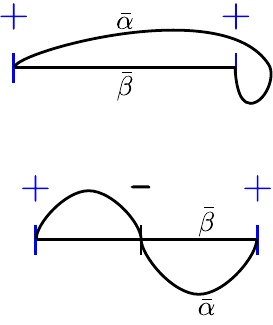}
 \caption{We show that without loss of generality, $\bar \alpha$ and $\bar\beta$ fall into one of these two cases.}
 \label{fig:alpha1}
\end{figure}
We first deal with the case where $\bar \alpha$ does not intersect $\bar \beta$ in its interior. Shrink $\bar \alpha$ to the arc whose endpoints both lie on $\bar \beta$. We still denote the new arc by $\bar \alpha$, as its intersection number with $\mu$ can only decrease when we do this. Then $\sigma = \bar \beta \circ \bar \alpha$ is a closed curve (Figure \ref{fig:OneInt}). This curve is also essential and non-peripheral because $i(\sigma,\alpha) \geq 1$. We show this as follows: Homotope $\sigma$ off of itself slightly so that all of its intersections with $\alpha$ are transverse. By construction, $\bar \alpha$ has no transverse intersections with $\alpha$. Thus, $\sigma$ and $\alpha$ can only intersect along $\bar \beta$. As $\alpha$ and $\beta$ are in minimal position, all of these intersections must be essential. Moreover, from the point of view of $\alpha$, the intersections at the endpoints of $\bar \beta$ have the same sign. So we have homotoped $\sigma$ to have at least one intersection with $\alpha$ coming from its endpoint. Therefore, $i(\sigma, \alpha) \geq 1$, and so $\sigma$ is essential and non-peripheral. Thus we have
\[
 i(\sigma, \mu) \leq i(\bar \beta, \mu) + i(\bar \alpha,\mu)
\]
as desired. 

\begin{figure}[h!]
 \centering 
 \includegraphics{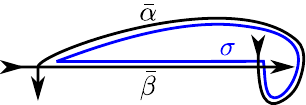}
 \caption{If $\bar \beta$ and $\bar \alpha$ have disjoint interiors, then $i(\sigma, \alpha) \geq 1$.}
 \label{fig:OneInt}
\end{figure}

\begin{figure}[h!]
 \centering 
 \includegraphics{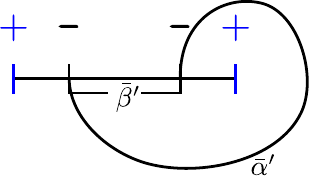}
 \caption{If $\bar \alpha$ intersects $\bar \beta$ more than once, we can find subarcs $\bar \beta'$ and $\bar \alpha'$ that are disjoint.}
 \label{fig:ExcessIntersections}
\end{figure}

In general, $\bar \alpha$ only intersects $\bar \beta$ in negative intersection points. If $\bar \alpha$ intersects $\bar \beta$ in at least two negative intersection points (Figure \ref{fig:ExcessIntersections}), then we can find subarcs $\bar \beta'\subset \bar \beta$ and $\bar \alpha' \subset \bar \alpha$ so that the endpoints of $\bar \alpha'$ are negative intersection points with $\beta$, and $\bar \beta'$ and $\bar \alpha'$ have disjoint interior. In this case, we let $\sigma = \bar \beta' \circ \bar \alpha'$. By the same logic as above, $\sigma$ is essential and non-peripheral. Since $i(\bar \beta', \mu) \leq i(\bar \beta, \mu)$ and $i(\bar \alpha', \mu) \leq i(\bar \alpha, \mu)$, we get $i(\sigma, \mu) \leq i(\bar \alpha, \mu) + i(\bar \beta, \mu)$.

\begin{figure}[h!]
 \centering 
 \includegraphics{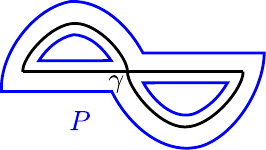}
 \caption{If $i(\bar \beta, \bar \alpha) = 1$, then a regular neighborhood of $\alpha$ is an embedded pair of pants $P$.}
 \label{fig:PantsBoundary}
\end{figure}

Suppose lastly that $\bar \alpha$ intersects $\bar \beta$ exactly once in its interior (Figure \ref{fig:alpha1}). Then $\sigma = \bar \beta \circ \bar \alpha$ is a curve with at least one self-intersection. (If $\sigma$ were simple, then $\beta$ and $\alpha$ would form a null-homotopic bigon, which is impossible.) As $\sigma$ is non-simple, it is essential and non-peripheral, and we again have that $i(\sigma, \mu) \leq i(\bar \alpha, \mu) + i(\bar \beta, \mu)$.

As we can find an essential, non-peripheral closed curve $\sigma$ with $i(\sigma, \mu) \leq i(\bar \alpha, \mu) + i(\bar \beta, \mu)$, we complete the proof.
\end{proof}

\section{Sketch of proof for Mixed Collar Theorem \ref{thm:Full_Collar}}
\label{sec:Mixed_collar_idea} 
Before embarking on a proof of Theorem \ref{thm:Full_Collar}, we give a short sketch of its proof. 
For simplicity, suppose $\mu$ is a filling closed geodesic, and $\alpha$ and $\beta$ are simple closed curves with $i(\alpha, \beta) = 1$. We wish to show that if $\alpha$ is short and $\beta$ is long in the length minimizer $\pi(\mu)$, then $\mu$ must intersect $\beta$ much more than $\alpha$. 

To see this, suppose we can shrink the length of $\alpha$ by contracting $S$ along an $\epsilon$-strip about $\beta$. Thus, for each intersection of $\mu$ with $\beta$, the length of $\mu$ will decrease by roughly $\epsilon$. However, we choose $\epsilon$ so that this construction will increase the width of the maximal embedded collar about $\alpha$ by length at most 1. Thus, for each intersection between $\mu$ and $\alpha$, the length of $\mu$ will increase by 1. In other words, the length of $\mu$ changes by roughly $i(\mu, \alpha) - \epsilon i(\mu, \beta)$.

If we do this construction to the length minimizer $\pi(\mu)$ of $\mu$, then the total length of $\mu$ can only increase. This gives us our result on the relationship between $i(\mu, \alpha), i(\mu, \beta)$ and $\epsilon$. Roughly, $i(\mu, \alpha) > \epsilon \cdot i(\mu, \beta)$. Our possible choices of $\epsilon$ depend on the length of $\beta$ in $\pi(\mu)$. If $\beta$ is longer, then the maximal width of an embedded strip about $\beta$ is smaller. So $\epsilon$ depends on $\ell_{\pi(\mu)}(\beta)$, and we get our mixed collar lemma.

To make this result precise, we first explain how to remove an $\epsilon$ strip about a geodesic arc in a surface with boundary in Section \ref{sec:Surgery_with_boundary}. The construction itself is a slight refinement of work of Papadopoulos and Th\'eret (\cite{PT09}), which is itself based on a construction of Thurston \cite{Thurston88}. However, we have to carefully control the geometry of the resulting surface. This way, in Section \ref{sec:Closed_surfaces}, we show that we can do this surgery on a closed hyperbolic surface $X$, by cutting $X$ along $\alpha$, doing the construction for surfaces with boundary, and carefully controlling the lengths of the resulting boundary components so that we can glue them back together. Moreover, we estimate precisely how the length of a current $\mu$ changes after removing an $\epsilon$-strip from a hyperbolic surface $X$ (Lemma \ref{lem:Length_change_closed}). Using this result, we use the fact that $\pi(\mu)$ is a length minimizer to prove the mixed collar theorem in Section \ref{sec:Mixed_collar_theorem}.

\section{Surgery on surfaces with boundary}
\label{sec:Surgery_with_boundary}

Let $X$ be a hyperbolic surface with geodesic boundary. Let $\beta$ be a simple geodesic arc whose endpoints are orthogonal to $\partial X$. For all $\epsilon > 0$ small enough, we will describe how to get a new hyperbolic surface with boundary by \textbf{removing an $\epsilon$-strip about $\beta$.} This is the exact construction used by Papadopoulos and Th\'eret in \cite{PT09}. However, we will need to carefully control the geometry of the resulting surface, so we will re-describe the construction here.

Let $\hat X$ be the Nielsen completion of $X$. That is, we attach an infinite hyperbolic cuff to each boundary component of $X$. Alternatively, if we embed the universal cover $\tilde X$ of $X$ into $\h$, the the action $\pi_1(X)$ on $\tilde X$ extends to an action on $\h$. Then $\hat X$ is the quotient of $\h$ by this action. 

Take a lift $\tilde \beta$ of $\beta$ to $\h$, and let $\tilde x_0$ be its midpoint. Let $\tilde \beta_1$ and $\tilde \beta_2$ be two complete geodesics in $\h$ that are at distance $2\epsilon$ from each other and equidistant from $\tilde \beta$, such that their mutual perpendicular passes through $\tilde x_0$. We denote this mutual perpendicular by $\tilde \eta_0$. This condition was not imposed in the original construction in \cite{PT09}, but is necessary to be able to extend this construction to closed surfaces in Section \ref{sec:Closed_surfaces}.

Let $\tilde S_\epsilon$ be the bi-infinite strip bounded by $\tilde \beta_1$ and $\tilde \beta_2$ (Figure \ref{fig:S_epsilon}). As pointed out in \cite{PT09}, for all $\epsilon$ small enough, $\tilde S_\epsilon$ embeds in $\hat X$. We give an upper bound $\epsilon_{\max}$ on $\epsilon$ that depends only on $\ell_X(\beta)$ in equation \ref{eq:epsilon_max} below (proven in Lemma  \ref{lem:Geometry_embedded}).
\begin{figure}[h!]
 \centering 
 \includegraphics{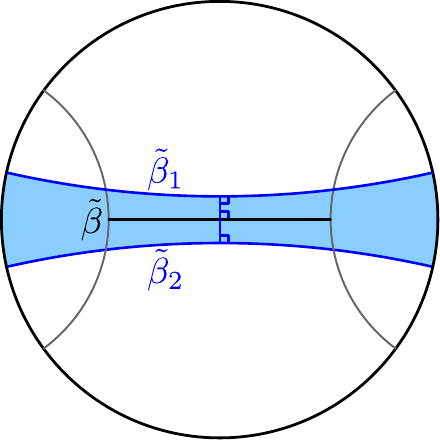}
 \caption{The blue strip is $S_\epsilon$.}
 \label{fig:S_epsilon}
\end{figure}

Let $S_\epsilon$ be the projection of $\tilde S_\epsilon$ to $\hat X$. Then $\beta_1$ and $\beta_2$ are the projections of its boundary curves $\tilde \beta_1$ and $\tilde \beta_2$, respectively, $\eta_0$ is the projection of the mutual orthogonal $\tilde \eta_0$, and $x_0$ is the midpoint of $\beta$. Extend $\beta$ to a bi-infinite geodesic parameterized by arclength, $\beta: \R \to \hat X$, so that $\beta(0) = x_0$. Then for each $t$, let $\eta_t$ be the arc intersecting $\beta$ at $\beta(t)$, joining $\beta_1$ to $\beta_2$ and staying a constant distance $|t|$ away from $\eta_0$. We collapse each $\eta_t$ to a point to get a new hyperbolic surface $\hat Y$. We call its convex core, $Y$, the result of removing an $\epsilon$ strip about $\beta$ from $X$ (Figure \ref{fig:Surface_with_boundary_construction}).

\begin{figure}[h!]
 \includegraphics{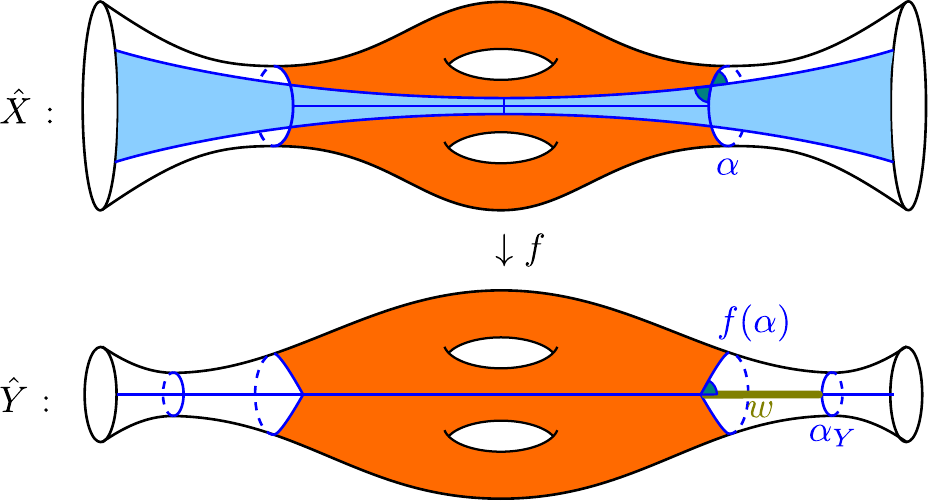}
 \caption{We glue together the two boundary curves of the strip $S_\epsilon$ (in blue). The image of $X$ (orange) is inside the convex core of $\hat Y$.}
 \label{fig:Surface_with_boundary_construction}
\end{figure}

\subsection{Geometry of the new surface}

\label{subsec:Geometry}

Let $f: \hat X \to \hat Y$ be the quotient map coming from this construction (Figure \ref{fig:Surface_with_boundary_construction}). Note that $f$ sends $X$ into $Y$ (see the proof of Lemma \ref{lem:Geometry_embedded} for details.) Moreover, suppose one of the endpoints of the original arc $\beta$ is on a boundary curve $\alpha$ of $X$. Then $f(\alpha)$ is a closed curve that is a concatenation of either one or two geodesic segments. (The number of geodesic segments depends on whether $\beta$ has one or both endpoints on $\alpha$.) Let $\alpha_Y$ be the geodesic representative of $\alpha$ in $Y$. Then $f(\alpha)$ and $\alpha_Y$ bound a cylinder $C$. We say the \textbf{width} $w$ of $C$ is the Hausdorff distance between its boundaries. We now show that we can choose $\epsilon$ depending only $\ell_X(\beta)$ so that we can remove an $\epsilon$ strip from $X$ (Lemma \ref{lem:Geometry_embedded}), control the width $w$ of $C$ (Lemma \ref{lem:Geometry_collar_width}), and achieve $\ell_Y(\alpha)$ in a range of values that depend only on $\ell_X(\beta)$ (Lemma \ref{lem:Geometry_alpha_length}).

First we show a collar lemma for arcs that is analogous to the one for simple closed curves. It is an easy corollary of the usual collar lemma, but does not seem to be explicitly written anywhere in the literature.
\begin{lem}
\label{lem:Collar_for_arcs}
Let $X$ be a hyperbolic surface with geodesic boundary. Let $\beta$ be a simple geodesic arc whose endpoints are orthogonal to the boundary of $X$. For all $\epsilon$ with 
 \[
  \epsilon \leq \sinh^{-1}\left (\frac{1}{\sinh(\ell_X(\beta))} \right)
 \]
the regular $\epsilon$-neighborhood of $\beta$ is embedded in $X$.
\end{lem}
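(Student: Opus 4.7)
The plan is to reduce to the classical collar lemma for closed geodesics by doubling. First, I would form the double $DX$ of $X$ along $\partial X$: take two isometric copies of $X$ and glue them by the identity on $\partial X$. This is again a hyperbolic surface (with geodesic boundary consisting of any boundary components of $X$ that were not glued, if we chose to only double along $\partial X$; for this argument it is simplest to double along all of $\partial X$). The doubling construction comes with a natural isometric involution $\iota: DX \to DX$ that fixes $\partial X$ pointwise and exchanges the two copies of $X$.

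Since the endpoints of $\beta$ meet $\partial X$ orthogonally, $\beta$ and its reflected image $\iota(\beta)$ fit together smoothly at each endpoint: the two tangent vectors agree because each is perpendicular to $\partial X$ and they point to opposite sides. Therefore $\hat\beta := \beta \cup \iota(\beta)$ is a smooth simple closed geodesic in $DX$ of length
\[
\ell_{DX}(\hat\beta) = 2\,\ell_X(\beta),
\]
where simplicity follows from simplicity of $\beta$ together with the fact that the two copies of $\beta$ intersect only at their common boundary endpoints.

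Now I would invoke the classical collar lemma applied to the simple closed geodesic $\hat\beta \subset DX$: the regular $w$-neighborhood of $\hat\beta$ is embedded in $DX$ for every
\[
w \leq \sinh^{-1}\!\left(\frac{1}{\sinh(\ell_{DX}(\hat\beta)/2)}\right) = \sinh^{-1}\!\left(\frac{1}{\sinh(\ell_X(\beta))}\right),
\]
which is exactly the upper bound $\epsilon$ in the statement. Because $\iota$ fixes $\hat\beta$ setwise, the $\epsilon$-neighborhood of $\hat\beta$ in $DX$ is $\iota$-invariant, so intersecting it with the copy of $X$ inside $DX$ recovers precisely the regular $\epsilon$-neighborhood of $\beta$ in $X$. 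Since an embedded neighborhood in $DX$ restricts to an embedded neighborhood in $X$, this completes the proof.

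There is essentially no obstacle: the only thing to check carefully is that the two copies of $\beta$ glue to a smooth, simple geodesic in $DX$, and both points are immediate from the orthogonality and simplicity hypotheses on $\beta$. The rest is just a bookkeeping comparison between the doubled length $2\ell_X(\beta)$ and the exponent in the classical collar inequality.
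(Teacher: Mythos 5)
Your proof is correct and follows essentially the same route as the paper: the paper likewise doubles $X$ along its boundary, notes that the orthogonality hypothesis makes $\beta$ double to a simple closed geodesic of length $2\ell_X(\beta)$, applies the classical collar lemma to that geodesic, and restricts the resulting embedded collar back to $X$.
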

\begin{proof}
 Double $X$ along its boundary to get a closed hyperbolic surface $X'$. We can likewise double $\beta$ along the boundary of $X$ to get a new curve $\beta'$ on $X'$. Since the $\beta$ is orthogonal to $\partial X$ at its endpoints, $\beta'$ is a closed geodesic on $X'$, whose length is $2 \ell_X(\beta)$. Moreover, an $\epsilon$-neighborhood of $\beta'$ on $X'$ restricts to a $\epsilon$-neighborhood of $\beta$ on $X$. By the collar lemma, $N_\epsilon(\beta')$ is embedded for every $\epsilon$ with 
 \[
  \epsilon <  \sinh^{-1}\left (\frac{1}{\sinh(\frac 12 \ell_{X'}(\beta'))}\right )
 \]
As $\frac 12 \ell_{X'}(\beta')) = \ell_X(\beta)$, we are done.
\end{proof}
From now on, for any geodesic arc $\beta$, we let 
\[
 \Col_X(\beta) =  \sinh^{-1}\left (\frac{1}{\sinh(\ell_X(\beta))} \right)
\]
be the \textbf{collar width} of $\beta$.

Now, we show that the range of $\epsilon$ for which we can remove an $\epsilon$ strip from $X$ depends only on $\ell_X(\beta)$. In fact, set $\epsilon_{\max} = \epsilon_{\max}(\ell_X(\beta))$ so that 
\begin{equation}
\label{eq:epsilon_max}
\sinh \epsilon_{\max} = \tanh 1 \frac{\tanh(\frac 12 \Col_X(\beta))}{\cosh( \frac12\ell_X(\beta))}
\end{equation}
\begin{rem}
\label{rem:decay_of_epsilon_max}
In fact, $\epsilon_{\max}$ is a decreasing function with $0 < \epsilon_{\max} < \tanh 1$. Moreover, as $x$ tends to infinity, $\epsilon_{\max}(x)$ decays like $e^{-\frac 32 x}$. That is, the limit of $e^{\frac 32 x}\epsilon_{\max}(x)$ tends to a non-zero constant as $x$ tends to infinity.
\end{rem}

Then we have:
\begin{lem}
 \label{lem:Geometry_embedded}
 Let $\epsilon_{\max} = \epsilon_{\max}(\ell_X(\beta))$ be as in equation \ref{eq:epsilon_max}. If $\epsilon \leq \epsilon_{\max}$, then the $\epsilon$-strip $S_\epsilon$ about $\beta$ will be embedded in $\hat X$.
\end{lem}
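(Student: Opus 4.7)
The approach is to show that for $\epsilon \leq \epsilon_{\max}$, the strip $\tilde S_\epsilon$ fits inside an embedded neighborhood of $\tilde\beta$ in $\hat X$, with the width controlled via the collar lemma for arcs (Lemma \ref{lem:Collar_for_arcs}). First, I set up convenient coordinates: place the upper half-plane model so that $\tilde\eta_0$ is the positive imaginary axis and $\tilde x_0 = i$. Then $\tilde\beta, \tilde\beta_1, \tilde\beta_2$ become concentric semicircles of radii $1, e^{-\epsilon}, e^\epsilon$, the strip $\tilde S_\epsilon$ is the annular region $\{z : e^{-\epsilon} < |z| < e^\epsilon\}$ in $\h$, and an arclength parametrization of $\tilde\beta$ through $\tilde x_0$ is $p(s) = (-\tanh s, \operatorname{sech} s)$. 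The standard formula $\cosh d(z_0, \{|z|=r\}) = (|z_0|^2+r^2)/(2r\operatorname{Im} z_0)$ then yields the key identity
\[
\cosh d(p(s), \tilde\beta_i) = \cosh s \cdot \cosh\epsilon,
\]
so the maximum distance from the arc portion of $\tilde\beta$ (corresponding to $|s|\leq \tfrac 12 \ell_X(\beta)$) to either side of the strip is $\cosh^{-1}\bigl(\cosh(\tfrac 12 \ell_X(\beta))\cosh\epsilon\bigr)$, attained at the endpoints of $\beta$.

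Next I would reduce embeddedness to the claim that this maximum distance is at most $\Col_X(\beta)$. Since $\tilde S_\epsilon$ is convex and symmetric about $\tilde\beta$, any non-trivial overlap $g\tilde S_\epsilon \cap \tilde S_\epsilon \neq \emptyset$ with $g \in \pi_1(\hat X)$ forces some translate $g\tilde\beta$ to enter the $R$-neighborhood of $\tilde\beta$, where $R$ is the maximum strip-width computed above. On the portion of $\tilde S_\epsilon$ whose perpendicular foot lies on the arc $\beta$, Lemma \ref{lem:Collar_for_arcs} forbids such an encroachment whenever $R \leq \Col_X(\beta)$. The remaining portions of $\tilde S_\epsilon$ project into the funnels of $\hat X$, each isometric to the quotient of a half-plane by a single hyperbolic translation (namely the one corresponding to the boundary component of $X$ containing an endpoint of $\beta$); there one checks directly that the strip does not wrap around the funnel's core geodesic for the relevant range of $\epsilon$.

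Finally, the inequality $\cosh(\tfrac 12 \ell_X(\beta))\cosh\epsilon \leq \cosh\Col_X(\beta)$, rewritten using $\sinh\Col_X(\beta) = 1/\sinh\ell_X(\beta)$ together with standard half-angle manipulations, becomes an upper bound on $\sinh\epsilon$ of the form $(\mathrm{const}) \cdot \tanh(\tfrac 12 \Col_X(\beta))/\cosh(\tfrac 12 \ell_X(\beta))$. The factor $\tanh 1$ appearing in $\epsilon_{\max}$ is not strictly necessary for embeddedness: it is a safety margin, saved for the next lemma (Lemma \ref{lem:Geometry_collar_width}) to ensure the collar of $\alpha$ in the new surface $Y$ grows by at most $1$. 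The main obstacle I anticipate is the funnel portion of the argument: the collar lemma for arcs does not directly control translates of $\tilde\beta$ beyond the endpoints of $\beta$, so one must separately verify embeddedness there using the explicit description of each funnel as a quotient of a half-plane; once this is handled, the rest is bookkeeping with standard hyperbolic identities.
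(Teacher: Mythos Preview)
Your overall plan---show that the strip lies in an embedded collar of $\beta$ inside $X$, then deal with the funnels separately---matches the paper's two-step structure. But the execution breaks down at the very first computation. Your ``standard formula'' $\cosh d(z_0,\{|z|=r\})=(|z_0|^2+r^2)/(2r\operatorname{Im}z_0)$ is the formula for the distance from $z_0$ to the \emph{point} $ir$, not to the geodesic $\{|z|=r\}$ (just test it at $z_0\in\{|z|=r\}$ with $z_0\neq ir$). So the key identity $\cosh d(p(s),\tilde\beta_i)=\cosh s\,\cosh\epsilon$ is false, and your target inequality $\cosh(\tfrac12\ell_X(\beta))\cosh\epsilon\le\cosh\Col_X(\beta)$ is impossible for long $\beta$: as $\ell_X(\beta)\to\infty$ we have $\Col_X(\beta)\to 0$, so the right side tends to $1$ while the left side tends to infinity even at $\epsilon=0$. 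Thus the reduction you propose cannot be what governs embeddedness. There is also a more structural issue: bounding $d(p(s),\tilde\beta_1)$ for $|s|\le\tfrac12\ell_X(\beta)$ does not bound the distance from points of $\tilde S_\epsilon\cap\tilde X$ back to $\tilde\beta$, because the relevant portion of $\tilde S_\epsilon$ is cut off by the lifts $\tilde\alpha$ of $\partial X$, not by an arclength window on $\tilde\beta$.

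What the paper does instead is compute the length $a$ of the segment $\tilde S_\epsilon\cap\tilde\alpha$ on one side of $\tilde\beta$ via the Lambert quadrilateral identity $\tanh a=\tanh\epsilon\,\cosh(\tfrac12\ell_X(\beta))$, and then observe that $\epsilon\le\epsilon_{\max}$ forces $a<\tfrac12\Col_X(\beta)$; since the Lambert quadrilateral is convex and its farthest vertex from $\tilde\beta$ is at distance $\le a$, one gets $\tilde S_\epsilon\cap\tilde X\subset N_{\frac12\Col_X(\beta)}(\tilde\beta)$, which the collar lemma for arcs embeds in $X$. The funnel step is not routine: the paper argues by contradiction that if two translates of the strip meet in a cuff, the resulting isoceles triangle has base at least $\Col_X(\beta)$ and base angles equal to the acute angle $\theta$ of the Lambert quadrilateral, and then the angle-of-parallelism bound $\cos\theta\ge\tanh(\tfrac12\Col_X(\beta))$ contradicts $\cos\theta=\sinh\epsilon\,\sinh(\tfrac12\ell_X(\beta))$. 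Your sketch (``check directly that the strip does not wrap around the funnel's core geodesic'') does not supply this argument, and it is precisely here that the factor $\tanh 1$ and the specific form of $\epsilon_{\max}$ are used.
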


\begin{proof}
First, we show that if $\epsilon < \epsilon_{\max}$, then $\tilde S_\epsilon$ embeds in $X$, and then we will show that it embeds in all of $\hat X$. Refer to Figure \ref{fig:Lombardi_quadrilaterals} for what follows. 

\begin{figure}[h!]
 \includegraphics{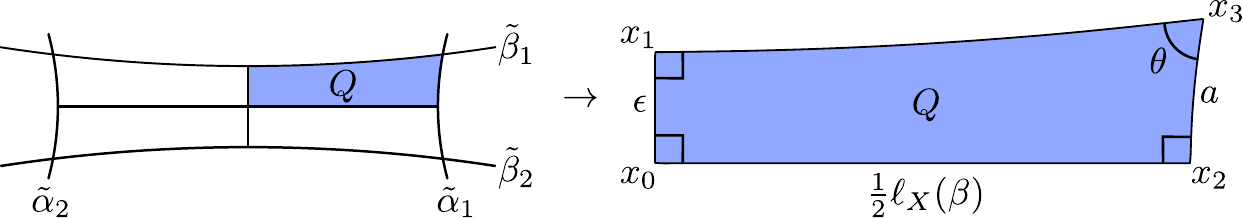}
 \caption{Between $\tilde \alpha_1$ and $\tilde \alpha_2$, $\tilde S_\epsilon$ looks like 4 congruent Lombardi quadrilaterals}
 \label{fig:Lombardi_quadrilaterals}
\end{figure}

We fix some notation: Recall that $\tilde \eta_0$ is the arc orthogonal to $\tilde \beta_1, \tilde \beta$ and $\tilde \beta_2$. Let $\tilde X$ be the connected component of the lift of $X$ to $\h$ that contains $\tilde \beta$. Then $S_\epsilon \cap \tilde X$ is a quadrilateral with 4 congruent angles. In fact, $\tilde \eta_0$ and $\tilde \beta$ cut this quadrilateral into 4 congruent quadrilaterals. Since $\tilde \eta_0$ and $\tilde \beta$ bisect each other, all four quadrilaterals are congruent. Let $Q$ be a quadrilateral bounded by $\tilde \beta$ and a lift $\tilde \alpha$ of $\alpha$. 

Recall that $\tilde \eta_0$ passes through the midpoint $x_0$ of $\tilde \beta$. Let $x_1$ be the endpoint of $\tilde \eta_0$ on $\tilde \beta_1$, $x_2$ be the endpoint of $\tilde \beta$ on $\tilde \alpha$, and $x_3$ be the intersection point of $\tilde \beta_1$ and $\tilde \alpha$. We have: 
\[
 \ell (\overline{x_0 x_1}) = \epsilon, \text{ and } \ell (\overline{x_0 x_2}) = \frac 12 \ell_X(\beta)
\]

Note that $Q$ has right angles at $x_0, x_1$ and $x_2$, so it is a Lambert quadrilateral. Let $a = \ell(\overline{x_1 x_2})$. By, for example, \cite[Theorem 32.21]{Martin}, we have 
\begin{equation}
\label{eq:define_a}
  \tanh a = \tanh \epsilon \cosh \frac{\ell_X(\beta)}{2} 
\end{equation}
First we show that $\epsilon \leq \epsilon_{\max}$ implies 
\[
a < \frac 12 \Col_X(\beta)
\]
Indeed, we have $\tanh \epsilon < \sinh \epsilon$ for all $\epsilon$. Then using $\tanh 1 < 1$, we see that $\epsilon < \epsilon_{\max}$ implies $\tanh \epsilon  \cosh \frac{\ell_X(\beta)}{2}  < \tanh(\frac 12 \Col_X(\beta))$. This gives us the desired inequality. Thus, $\tilde S_\epsilon \cap \tilde X$ lies in a $\frac 12 \Col_X(\beta)$-neighborhood of $\tilde \beta$. In particular, $\tilde S_\epsilon$ is embedded in $X$.

\begin{figure}[h!]
 \includegraphics{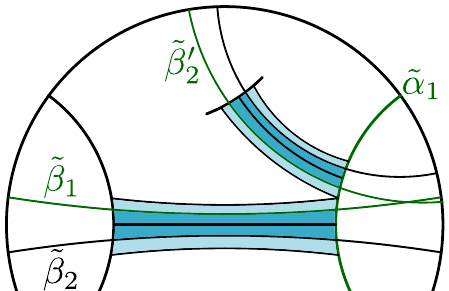}
 \caption{Two lifts of $\tilde S_\epsilon$. The light blue regions are $\Col_X(\beta)$-neighborhoods of $\tilde \beta$.}
 \label{fig:Nested_strip_and_max_angle}
\end{figure}

Next we show that $\tilde S_\epsilon$ is embedded in $\hat X$. Recall that $\beta_1, \beta_2$ are the projections of $\tilde \beta_1, \tilde \beta_2$ to $\hat X$. Since $\tilde S_\epsilon$ is embedded in $X$, if it is not embedded in $\hat X$, then $\beta_1$ and $\beta_2$ must intersect in $\hat X \setminus X$. That is, they intersect in a cuff. Without loss of generality, they intersect in the cuff bounded by $\alpha$. Thus, there is a lift $\tilde \beta_2'$ of $\beta_2$ that intersects $\tilde \alpha$ and $\tilde \beta_1$.

Let $x_2'$ be the intersection points between $\tilde \beta_2'$ and $\tilde \alpha$, and let $p$ be the intersection point of $\tilde \beta_1$ and $\tilde \beta_2'$. Then we have a triangle $\Delta$ with vertices $x_2, x_2'$ and $p$. Since $a < \frac 12 \Col_X(\beta)$, and the $\Col_X(\beta)$-neighborhood of $\beta$ is embedded in $X$, the distance between $x_2$ and $x_2'$ is at least $\Col_X(\alpha)$. Now let $\theta$ be its acute angle of $Q$. Then the angles of $\Delta$ at $x_2$ and $x_2'$ are both $\theta$ as well.  

Recall that for a distance $d$, then \textit{angle of parallelism} $\theta_{\max}$ of $d$ is the maximum angle for which a right hyperbolic triangle can have base $d$ and acute angle $\theta_{\max}$. Moreover, $\theta_{\max}$ decreases as $d$ increases. In particular, We can cut $\Delta$ into two congruent right triangles with base $\frac 12 \Col_X(\beta)$. Thus, $\theta$ must be smaller than the angle of parallelism for $\frac 12 \Col_X(\beta)$. So, for example, by \cite[Chapter 32]{Martin}, 
\[
 \cos(\theta) \geq \tanh(\frac 12 \Col_X(\beta))
\]
as $\cos(\theta)$ increases as $\theta$ decreases for $0 < \theta < \pi/2$. However, $\theta$ is the acute angle of $Q$, so we have 
\[
  \cos(\theta) = \sinh \epsilon \sinh (\ell_X(\beta)/2)
\]
Given our choice of $\epsilon_{\max}$, this is a contradiction. Indeed, $\sinh(\ell_X(\beta/2)) < \cosh(\ell_X(\beta/2))$. Thus, $\sinh \epsilon \sinh (\ell_X(\beta)/2) < \sinh \epsilon \cosh(\ell_X(\beta)/2)$. And by our choice of $\epsilon_{\max}$, this is smaller than $\tanh (\frac 12 \Col_X(\beta))$.

In particular, $\tilde S_\epsilon$ is embedded in $X$.
\end{proof}

\begin{lem}
 \label{lem:Geometry_collar_width}
 For $\epsilon_{\max}$ as in Equation (\ref{eq:epsilon_max}), if $\epsilon \leq \epsilon_{\max}$, then the width $w$ of the cylinder bounded by $f(\alpha)$ and $\alpha_Y$ will be at most 1.
 \end{lem}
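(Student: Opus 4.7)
My approach is to explicitly compute the corner angle of $f(\alpha)$ at the surgery points and then translate that angle bound into a distance bound via a Saccheri quadrilateral argument.

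First, I would analyze the piecewise-geodesic structure of $f(\alpha)$. The map $f$ collapses $S_\epsilon$ to the geodesic arc $\beta$, so $f(\alpha)$ is the (isometric) image of $\alpha\setminus S_\epsilon$ together with a doubly-traversed arc on $\beta$; after removing the backtracking, we obtain a closed piecewise geodesic homotopic to $\alpha_Y$ with a corner at the image point $p=f(x_3)=f(x_3'')$ (and a second corner if $\beta$ has both endpoints on $\alpha$). By the reflection symmetry of the strip construction across $\beta$, the two geodesic branches at $p$ are mirror images across $\beta_Y$, each making the angle $\theta$ with $\beta_Y$ where $\theta$ is the acute Lambert angle from Lemma \ref{lem:Geometry_embedded}, namely $\cos\theta=\sinh\epsilon\,\sinh(\ell_X(\beta)/2)$. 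Hence the exterior angle at $p$ is $\psi=\pi-2\theta$ with
\[
\sin(\psi/2)=\cos\theta=\sinh\epsilon\,\sinh\bigl(\tfrac12\ell_X(\beta)\bigr).
\]

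Next, applying $\epsilon\le\epsilon_{\max}$ and the definition in Equation (\ref{eq:epsilon_max}), I obtain
\[
\sin(\psi/2)\le \tanh(1)\,\tanh\bigl(\tfrac12\Col_X(\beta)\bigr)\,\tanh\bigl(\tfrac12\ell_X(\beta)\bigr)<\tanh(1).
\]
The factor $\tanh(1)$ built into $\epsilon_{\max}$ is tuned exactly so that this angle bound will ultimately force $h\le 1$ in the Saccheri computation below.

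Finally, I would translate the corner-angle bound into the distance bound. Lift to the universal cover $\tilde Y$ and write $\gamma$ for the deck transformation corresponding to $\alpha$, so that $\alpha_Y$ lifts to the axis of $\gamma$ and the corners of the lift of $f(\alpha)$ are the translates $\{\gamma^k\tilde p\}$. Reflection through the perpendicular from $\tilde p$ to the axis fixes $\tilde p$ and swaps $\gamma\tilde p$ with $\gamma^{-1}\tilde p$, so this perpendicular bisects the corner angle at $\tilde p$. Forming the Saccheri quadrilateral with top segment $[\tilde p,\gamma\tilde p]$ of length $L$, base on the axis of length $\ell_Y(\alpha_Y)$, perpendicular sides of length $h$, and summit angle $(\pi-\psi)/2$, the Lambert identities combined with the Saccheri relation $\sinh(L/2)=\cosh(h)\sinh(\ell_Y(\alpha_Y)/2)$ yield
\[
\sinh(h)=\frac{\sin(\psi/2)}{\sqrt{\tanh^2(L/2)-\sin^2(\psi/2)}},
\]
so $h\le 1$ is equivalent to $\sin(\psi/2)\le\tanh(1)\tanh(L/2)$. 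Since the top side of a Saccheri quadrilateral realizes its maximum distance to the base at the corners, we get $w\le h\le 1$.

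The main obstacle will be controlling $L$ from below when $\ell_X(\alpha)$ is very small: the Saccheri formula needs $\tanh(L/2)>\sin(\psi/2)$, and the final bound requires $\tanh(L/2)\ge\tanh(\tfrac12\Col_X(\beta))\tanh(\tfrac12\ell_X(\beta))$. When $\alpha$ is short one will need to exploit that $\alpha$ then has a wide collar, which forces $\Col_X(\beta)$ to be small (since $\beta$ must traverse the collar), and hence keeps $a$ and $\sin(\psi/2)$ small enough that the inequality still closes.
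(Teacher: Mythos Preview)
Your approach is essentially the paper's: compute the corner angle $\theta$ at the singular point of $f(\alpha)$ via the Lambert quadrilateral from Lemma \ref{lem:Geometry_embedded}, then use the Saccheri/Lambert quadrilateral bounded by $f(\alpha)$ and $\alpha_Y$ to turn the angle bound into a distance bound. Your formula $\sinh h=\sin(\psi/2)/\sqrt{\tanh^2(L/2)-\sin^2(\psi/2)}$ is just a rearrangement of the Lambert identity $\cos\theta=\tanh(L/2)\tanh w$ that the paper uses, and your criterion $\sin(\psi/2)\le\tanh(1)\tanh(L/2)$ is exactly equivalent to the paper's $\tanh w\le\tanh 1$.

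Where you diverge is in the last paragraph, and there you have misidentified the difficulty. The lower bound on $L$ is not delicate and does not require any argument through the collar of $\alpha$. The paper observes (already in the proof of Lemma \ref{lem:Geometry_embedded}) that for $\epsilon\le\epsilon_{\max}$ the strip $S_\epsilon\cap X$ sits inside the $\tfrac12\Col_X(\beta)$-neighborhood of $\beta$, while the full $\Col_X(\beta)$-neighborhood of $\beta$ is embedded by Lemma \ref{lem:Collar_for_arcs}. Hence each component of $\alpha\setminus S_\epsilon$ has length at least $\Col_X(\beta)$, so $L\ge\Col_X(\beta)$ and $\tanh(L/2)\ge\tanh(\tfrac12\Col_X(\beta))$. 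This is already stronger than the bound $\tanh(L/2)\ge\tanh(\tfrac12\Col_X(\beta))\tanh(\tfrac12\ell_X(\beta))$ you said you needed, and it holds uniformly regardless of how short $\alpha$ is. Your proposed workaround (short $\alpha\Rightarrow$ wide collar $\Rightarrow$ long $\beta\Rightarrow$ small $\Col_X(\beta)$) is therefore unnecessary; once you plug in $L\ge\Col_X(\beta)$ the inequality closes immediately.
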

 \begin{proof}
  Let $S_\epsilon$ be the (embedded) $\epsilon$-strip about $\beta$ in $\hat X$.  Suppose for what follows that $\beta$ has only one endpoint on a boundary curve $\alpha$ of $X$. The case where $\beta$ has both endpoints on $\alpha$ gives the same estimates. Following the notation from Lemma \ref{lem:Geometry_embedded}, let $x_3$ be an intersection point between boundary geodesic $\beta_1$ of $S_\epsilon$ and $\alpha$. Let $p = f(x_3)$. Since $\beta$ intersects $\alpha$ once, $f(\alpha)$ consists of a geodesic segment whose endpoints are joined at $p$ in $Y$. Drop a perpendicular from $p$ down to a point $q$ on $\alpha$. Then $w = \ell_Y(\overline{pq})$ is exactly the Hausdorff distance between $f(\alpha)$ and $\alpha_Y$. (If $\beta$ intersects $\alpha$ twice, then we would get two singular points on $f(\alpha)$, and two perpendiculars. But by symmetry of the construction, the lengths of these two perpendiculars will be the same.)
  
  Let $C$ be the cylinder bounded by $f(\alpha)$ and $\alpha$. Then the segment $\overline{pq}$ cuts $C$ into a Sacchieri quadrilateral: it has two right angles corresponding to $q$, and two congruent opposite sides corresponding to $\overline{pq}$.  By taking the mutual perpendicular between points $r \in f(\alpha)$ and $s \in \alpha_Y$, we cut the Sacchieri quadrilateral into two congruent Lambert quadrilaterals. Let $R$ be one of these quadrilateral. Abusing notation, we denote its vertices by $p,q,r,s$.
  
  First, note that the angle $\theta$ at $p$ is the same as the angle $\theta$ at $x_3$ inside $S_\epsilon \cap X$ (Figure \ref{fig:Surface_with_boundary_construction}). In fact, because the mutual perpendicular between boundary curves $\beta_1$ and $\beta_2$ of $S_\epsilon$ passes orthogonally through $\beta$, the point $x_3$ gets glued to a point $x_3'$ at the intersection of $\beta_2$ and $\alpha$. The interior angle at $x_3$ of $S_\epsilon \cap X$ is also the same as the interior angle at $x_3'$. So the interior angle of $C$ at $p = f(x_3)$ is exactly $2 \theta$. Dropping the perpendicular from $p$ to $\alpha_Y$ bisects this angle, again, by symmetry. From the proof of Lemma \ref{lem:Geometry_embedded},
  \[
   \cos(\theta) = \sinh \epsilon \sinh (\ell_X(\beta)/2)
  \]

  Next, we get a lower bound on the length of $\overline{pr}$. By the proof of Lemma \ref{lem:Geometry_embedded}, if $\epsilon < \epsilon_{\max}$, then $S_\epsilon \cap X$ lies in a $\frac 12 \Col_X(\beta)$-neighborhood of $\beta$. Now, $S_\epsilon$ cuts $\alpha$ into either one or two components, depending on whether $\beta$ intersects $\alpha$ othogonally once or twice. Since a $\Col_X(\beta)$-neighborhood of $\beta$ is embedded, this implies that these components both have length at least $\Col_X(\beta)$. In particular, each geodesic segment of $f(\alpha)$ has length at least $\Col_X(\beta)$. Since we bisected this segment to form the quadrilateral $R$, we have 
  \[
   \ell_Y(\overline{pr}) \geq \frac 12 \Col_X(\beta)
  \]

  By identities for Lambert quadrilaterals (see, eg, \cite[Theorem 32.21]{Martin}), for $w = \ell_Y(\overline{pq})$,
  \begin{align*}
   \cos \theta & = \tanh \ell_Y(\overline{pr}) \tanh w \\
    & \geq \tanh \left( \frac 12 \Col_X(\beta)\right) \tanh w
  \end{align*}

  Putting this together, we get 
  \[
   \tanh w \leq \frac{\sinh \epsilon \sinh (\ell_X(\beta)/2)}{\tanh \left( \frac 12 \Col_X(\beta)\right)}
  \]
 But we chose $\epsilon < \epsilon_{\max}$, so $\sinh \epsilon < \tanh 1 \frac{\tanh(\frac 12 \Col_X(\beta))}{\cosh \ell(\beta)/2}$. As $\sinh(x) < \cosh(x)$ for all $x$, this implies $\tanh w < \tanh 1$. That is, the Hausdorff distance between $f(\alpha)$ and $\alpha_Y$ is at most 1.
 \end{proof}

 \begin{lem}
 \label{lem:Geometry_alpha_length}
  Given $\ell_X(\alpha)$, there is a range of possible values for $\ell_Y(\alpha)$ that depends only on $\ell_X(\beta)$. More precisely, there is a constant $\ell_{\min} = \ell_{\min}(\ell(\beta))$ so that there is a one-to-one correspondence between the set of $\ell$ so that 
  \[
   \ell_{\min} \leq \ell \leq \ell_X(\alpha)
  \]
and the set of $\epsilon$ so that 
\[
 0 \leq \epsilon \leq \epsilon_{\max}(\ell_X(\beta))
\]
for which
  \[
   \ell_Y(\alpha) = \ell.
  \]
  where $Y$ is the metric obtained by removing an $\epsilon$-strip about $\beta$ from $X$. Moreover, the function $\ell_{\min}(x)$ is increasing in $x$.
 \end{lem}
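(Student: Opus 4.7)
The plan is to parameterize the construction by $\epsilon \in [0, \epsilon_{\max}]$, where $\epsilon_{\max} = \epsilon_{\max}(\ell_X(\beta))$ is as in equation \eqref{eq:epsilon_max}, and to show that the function
\[
\phi(\epsilon) := \ell_{Y_\epsilon}(\alpha)
\]
(where $Y_\epsilon$ is the result of removing the $\epsilon$-strip about $\beta$ from $X$) is a continuous, strictly decreasing map from $[0, \epsilon_{\max}]$ onto an interval $[\ell_{\min}, \ell_X(\alpha)]$. The one-to-one correspondence then follows from the intermediate value theorem.

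First, I would establish continuity and the boundary value $\phi(0)=\ell_X(\alpha)$. At $\epsilon=0$ no strip is removed, so $Y_0=X$; continuity of $\phi$ follows because the construction in Section \ref{sec:Surgery_with_boundary} varies continuously with $\epsilon$, and the geodesic representative of a simple closed curve is a continuous function of the hyperbolic metric.

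Next, for strict monotonicity I would work in the Sacchieri/Lambert quadrilateral setup from the proof of Lemma \ref{lem:Geometry_collar_width}. Let $L(\epsilon)$ denote the length of $f(\alpha)$ in $Y_\epsilon$ and $w(\epsilon)$ the width of the cylinder bounded by $f(\alpha)$ and $\alpha_{Y_\epsilon}$. From equation \eqref{eq:define_a}, $L(\epsilon) = \ell_X(\alpha) - 2a(\epsilon)$ (or $\ell_X(\alpha)-4a(\epsilon)$ if both endpoints of $\beta$ lie on $\alpha$), where $\tanh a(\epsilon) = \tanh\epsilon\,\cosh(\ell_X(\beta)/2)$. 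Both $a(\epsilon)$ and $w(\epsilon)$ are strictly increasing in $\epsilon$. Applying the standard Sacchieri identity relating $L(\epsilon), w(\epsilon)$, and $\phi(\epsilon)$ (and verifying via the Lambert identity $\cos\theta = \tanh(L(\epsilon)/2)\tanh(w(\epsilon))$ used in Lemma \ref{lem:Geometry_collar_width}), one sees that increasing $\epsilon$ simultaneously shortens the summit and widens the cylinder, both of which strictly decrease the length of the base $\alpha_{Y_\epsilon}$. Hence $\phi$ is strictly decreasing.

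Setting $\ell_{\min} := \phi(\epsilon_{\max})$, the one-to-one correspondence is immediate. The main obstacle will be the final claim that $\ell_{\min}$ depends only on $\ell_X(\beta)$ (not on $\ell_X(\alpha)$), and is increasing in $\ell_X(\beta)$. To obtain this, I plug $\epsilon=\epsilon_{\max}$ into the Sacchieri identity and invoke the defining relation \eqref{eq:epsilon_max}. The geometric interpretation is that $\epsilon_{\max}$ is precisely the angle-of-parallelism threshold for $\frac{1}{2}\Col_X(\beta)$ (as shown in the proof of Lemma \ref{lem:Geometry_embedded}), an intrinsic feature of $\beta$; at this extremal value, the $\ell_X(\alpha)$ terms in the Sacchieri formula cancel, leaving $\ell_{\min}$ as an explicit function of $\ell_X(\beta)$ alone. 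Monotonicity of $\ell_{\min}$ in $\ell_X(\beta)$ then follows by a direct calculation, combined with Remark \ref{rem:decay_of_epsilon_max} (monotone decay of $\epsilon_{\max}$) and the fact that $\Col_X(\beta)$ is decreasing in $\ell_X(\beta)$. Verifying this cancellation and checking the sign of the derivative is the most delicate part, and will use the Lambert-quadrilateral identities from \cite{Martin} together with the specific form of $\epsilon_{\max}$.
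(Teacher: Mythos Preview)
Your overall strategy---deriving an explicit relation between $\epsilon$ and $\ell_{Y_\epsilon}(\alpha)$ via the Lambert/Sacchieri quadrilaterals in the cylinder $C$, and then reading off continuity and strict monotonicity---is exactly what the paper does. In fact the paper obtains the closed formula
\[
\bigl(1-\sinh^2\epsilon\,\sinh^2(\ell_X(\beta)/2)\bigr)\cosh^2\!\bigl(\ell_Y(\overline{pr})\bigr)=\cosh^2\!\bigl(\tfrac12\ell_Y(\alpha)\bigr),
\]
and then simply observes that the left-hand side is strictly decreasing in $\epsilon$ (since both factors are), equals $\cosh^2(\tfrac12\ell_X(\alpha))$ at $\epsilon=0$, and attains its minimum at $\epsilon=\epsilon_{\max}$. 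That already gives the bijection, with $\ell_{\min}:=\phi(\epsilon_{\max})$.

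The genuine problem in your proposal is the last paragraph. You read the lemma as asserting that $\ell_{\min}$ is \emph{independent} of $\ell_X(\alpha)$, and you propose to produce this by a cancellation of the $\ell_X(\alpha)$ terms at $\epsilon=\epsilon_{\max}$. That cancellation does not occur: in the formula above, plugging in $\epsilon=\epsilon_{\max}$ fixes $a=a_{\max}$ and the first factor (both functions of $\ell_X(\beta)$ only), but the second factor still contains $\ell_X(\alpha)$, so $\ell_{\min}$ visibly depends on $\ell_X(\alpha)$. The lemma is not claiming otherwise. The phrase ``Given $\ell_X(\alpha)$, \dots\ depends only on $\ell_X(\beta)$'' means: with $\ell_X(\alpha)$ held fixed, the range $[\ell_{\min},\ell_X(\alpha)]$ is determined by $\ell_X(\beta)$ and nothing else about $X$; and ``$\ell_{\min}(x)$ is increasing in $x$'' is a statement about the $\ell_X(\beta)$-dependence with $\ell_X(\alpha)$ still fixed. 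The paper proves the monotonicity in $\ell_X(\beta)$ directly from the formula: for fixed $\ell_X(\alpha)$, increasing $\ell_X(\beta)$ decreases $\epsilon_{\max}$ and (through both factors) decreases the left-hand side, hence increases $\ell_{\min}$. So drop the cancellation step entirely and argue monotonicity in $\ell_X(\beta)$ straight from the explicit identity; the rest of your outline is fine.
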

 \begin{proof}
  In what follows, we will carefully compute $\ell_Y(\alpha)$ given $\ell_X(\alpha)$, $\ell_X(\beta)$ and $\epsilon$. But first, suppose $\beta$ has exactly one endpoint on $\alpha$. The case where $\beta$ has both endpoints on $\alpha$ is very similar, and we will point out the differences as they come up. Then $S_\epsilon$ intersects $\alpha$ in an arc of length $2a$, where, by Equation (\ref{eq:define_a}) in the proof of Lemma \ref{lem:Geometry_embedded},
  \[
   \tanh a = \tanh \epsilon \cosh \frac{\ell_X(\beta)}{2} 
  \]
  Let $a_{\max}$ be so that $\tanh a_{\max} = \tanh \epsilon_{\max} \cosh \frac{\ell_X(\beta)}{2}$. Then for any $a$ with $0 < a < a_{\max}$, we can find an $\epsilon$ with $0 < \epsilon < \epsilon_{\max}$ so that $S_\epsilon$ intersects $\alpha$ in an arc of length $2a$. Moreover, as $\epsilon$ or $\ell_X(\beta)$ increases, so does $a$.
  
  Next, we see that $\ell_Y(f(\alpha)) = \ell_X(\alpha) - 2a$. We form the Lambert quadrilateral $R$ in the same way as in the proof of Lemma \ref{lem:Geometry_collar_width}. In particular, we drop a perpendicular from the singular point of $f(\alpha)$ to $\alpha$, and then take the mutual perpendicular between $f(\alpha)$ and $\alpha$. In the case where $\beta$ intersects $\alpha$ once, this gives us two congruent Lambert quadrilaterals. In the case where $\beta$ intersects twice, this gives us four such quadrilaterals. We let $R$ be one such quadrilateral, with vertices $p,q,r,s$, where $p$ is the singular point of $f(\alpha)$, $r$ is the midpoint of the geodesic segment of $f(\alpha)$, and $\overline{pq}$ and $\overline{rs}$ are the sides of $R$ joining $f(\alpha)$ to $\alpha_Y$. Then, 
  \[
   \ell_Y(\alpha) = 2 \ell_Y(\overline{qs})
  \]
  when $\beta$ intersects $\alpha$ once. If $\beta$ intersects $\alpha$ twice, then the constant is 4 instead of 2. By, eg, \cite[Theorem 32.21]{Martin}, and using that $\ell_Y(\overline{qs}) = \frac 12 \ell_Y(\alpha)$ and $\ell_Y(\overline{pr}) = \ell_X(\alpha) - a$, we get
  \[
   \sin \theta = \frac{\cosh \left( \frac 12 \ell_Y(\alpha)\right )}{\cosh \left(\ell_X(\alpha) - a\right )}
  \]
 where $\theta$ is the acute angle of $R$. As we showed in the proof of Lemma \ref{lem:Geometry_collar_width}, $\theta$ is the same as any of the interior angles of $S_\epsilon \cap X$. In particular, 
 \[
  \cos (\theta) = \sinh \epsilon \sinh (\ell_X(\beta)/2)
 \]
Using that $\sin(\cos^{-1}(x)) = \sqrt{1 - x^2}$, we get that 
\begin{equation}
\label{eq:change_of_length_epsilon}
 \left(1 - \sinh^2 \epsilon \sinh^2 (\ell_X(\beta)/2)\right ) \cosh^2 \left(\ell_X(\alpha) - a\right )= \cosh^2 \left( \frac 12 \ell_Y(\alpha)\right )
\end{equation}
When $\epsilon = 0$, we get $\ell_Y(\alpha) = \ell_X(\alpha)$, as expected. For fixed $\ell_X(\alpha)$, as $\ell_X(\beta)$ increases, both $\epsilon_{\max}$ and the left hand side of this equation decrease. As $\epsilon$ increases, $a$ increases, so the left hand side decreases. Thus, the left hand side decreases monotonically to its minimum at $\epsilon = \epsilon_{\max}$.  In particular, for each $\ell_X(\beta)$, and $\epsilon_{\max}$ depending on $\ell_X(\beta)$, there is an $\ell_{\min}$ so that for all $\ell$ so that $\ell_{\min} < \ell < \ell_X(\alpha)$, there is an $\epsilon$ with $0 < \epsilon < \epsilon_{\max}$ for which 
\[
 \ell_Y(\alpha) = \ell
\]  
Conversely, for any $\epsilon < \epsilon_{\max}$, we have $ \ell_Y(\alpha) = \ell$ for $\ell$ between $\ell_{\min}$ and $\ell_X(\alpha)$.
 \end{proof}
 \begin{rem}
\label{rem:beta_decreases_epsilon_increases}
By Equation (\ref{eq:change_of_length_epsilon}), if $\ell_Y(\alpha)$ is fixed and $\ell_X(\beta)$ decreases, then $\epsilon$ must increase.
\end{rem}

\subsection{Change in length of arcs}
We now estimate how the length of an arc on $X$ changes when we remove an $\epsilon$ strip about $\beta$. Papadopoulos and Th\'eret prove a version of the lemma we need for simple closed curves on $X$ (Proposition 2.2 in \cite{PT09}). We give a slight extension of it here:   
\begin{lem}
\label{lem:Length_change_arcs}
Let $\gamma$ be any (not necessarily simple) geodesic arc on $X$ with endpoints on $\partial X$. Then there is a constant $C(\epsilon)$ so that 
 \[
 \ell_X(\gamma) - \ell_Y(f(\gamma)) >  C(\epsilon) i(\gamma, \beta) - C(\epsilon)
 \]
 for $C(\epsilon) = \min\{\epsilon, \log(1+e^{-\ell_X(\beta)/2}\epsilon^2)\}$.
\end{lem}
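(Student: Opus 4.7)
The proof adapts the strategy of Papadopoulos--Th\'eret \cite[Proposition 2.2]{PT09}, which treats the closed-curve case, to geodesic arcs with endpoints on $\partial X$. The starting point is the observation that $f$ is a local isometry on the complement of the interior of $S_\epsilon$ (even after the gluing $\beta_1 \sim \beta_2$), so that the length decrease $\ell_X(\gamma) - \ell_Y(f(\gamma))$ decomposes additively as a sum over the connected components of $\gamma \cap S_\epsilon$. These components split into $i(\gamma,\beta)$ \emph{full crossings}, in which $\gamma$ enters through one of $\beta_1,\beta_2$ and exits through the other, and at most two \emph{partial crossings} at the endpoints of $\gamma$ (which can begin or end on $\partial X$ inside $S_\epsilon$, since $\beta$ may terminate on $\partial X$).

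For one full crossing I pass to $\h$ and look at $\tilde\gamma$ intersecting $\tilde S_\epsilon$. Let $p_1 = \tilde\gamma \cap \tilde\beta_1$ and $p_2 = \tilde\gamma \cap \tilde\beta_2$. The $\eta_t$-foliation identifies $p_2$ with a point $p_2' \in \tilde\beta_1$, so the image of $\tilde\gamma \cap \tilde S_\epsilon$ under $f$ is exactly the subarc of $f(\tilde\beta_1)$ joining $f(p_1)$ to $f(p_2)$, of length $d_{\tilde\beta_1}(p_1, p_2')$. Hence the per-crossing saving is
\[
d_\h(p_1,p_2) - d_{\tilde\beta_1}(p_1,p_2'),
\]
a quantity depending only on $\epsilon$, $\ell_X(\beta)$, and the signed distance $t$ along $\tilde\beta$ from $\tilde\eta_0$ to the point $\tilde\gamma \cap \tilde\beta$. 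Placing $\tilde\eta_0$ on the imaginary axis in the upper half plane, so that $\tilde\beta_1, \tilde\beta_2$ are the semicircles of radii $e^{\pm\epsilon}$ and each $\eta_t$ is a Euclidean ray from the origin, a direct hyperbolic trigonometry computation --- reusing the Lambert quadrilateral identities already set up in the proof of Lemma~\ref{lem:Geometry_embedded} --- yields the uniform lower bound $C(\epsilon) = \min\{\epsilon, \log(1 + e^{-\ell_X(\beta)/2}\epsilon^2)\}$. The $\epsilon$-bound is dictated by the (best-case) perpendicular crossing $t=0$, where $p_2' = p_1$ and the saving is $2\epsilon$; the logarithmic bound dominates for oblique crossings at large $|t|$, where the saving becomes second-order in $\epsilon$ because the $\eta_t$-identification introduces an almost-canceling shift along $\tilde\beta_1$, controlled through the $\cosh$-growth of the $\eta_t$-length governed by $\ell_X(\beta)/2$.

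Summing over the $i(\gamma,\beta)$ full crossings gives a contribution of at least $C(\epsilon)\, i(\gamma,\beta)$. The two partial crossings contribute nonnegative savings, since $f$ is $1$-Lipschitz, and any discrepancy between the path-length of $f(\gamma)$ and $\ell_Y(f(\gamma))$ (arising either from the straightening of $f(\gamma)$ or from reconciling endpoints of $f(\gamma)$ on $f(\partial X)$ versus $\partial Y$) costs at most a bounded amount, absorbed into the $-C(\epsilon)$ error term via Lemma~\ref{lem:Geometry_collar_width}, which bounds the relevant collar width by $1$. The main obstacle is the per-crossing trigonometric estimate, specifically the logarithmic lower bound at large $|t|$: there the raw saving $d_\h(p_1,p_2) - d_{\tilde\beta_1}(p_1,p_2')$ is of order $\epsilon^2$, and one must carefully track the exponential growth of the equidistant arcs $\eta_t$ to prevent the $d_{\tilde\beta_1}$ term from entirely canceling $d_\h(p_1,p_2)$.
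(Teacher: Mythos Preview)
Your approach is essentially the paper's: decompose $\gamma\cap S_\epsilon$ into subarcs, apply the per-crossing estimate from \cite[Proposition~2.2]{PT09}, and sum. However, two points are misidentified.

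First, the counting is off. You assert that the $i(\gamma,\beta)$ subarcs crossing $\beta$ are all \emph{full} crossings, i.e., run from $\beta_1$ to $\beta_2$. This need not be so: a subarc crossing $\beta$ may have one (or both) of its endpoints on $\partial X$ rather than on $\beta_1\cup\beta_2$, precisely when an endpoint of $\gamma$ lies inside $S_\epsilon$. The paper's actual accounting is different: if a subarc $g$ crossing $\beta$ has \emph{both} endpoints on $\partial X$, then $g=\gamma$, so this happens for at most one subarc. The remaining $i(\gamma,\beta)-1$ subarcs each cross $\beta$ and at least one of $\beta_1,\beta_2$, and the computation in the proof of \cite[Proposition~2.2]{PT09} applies to those (not only to crossings from $\beta_1$ all the way to $\beta_2$).

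Second, the source of the $-C(\epsilon)$ is misattributed. It is exactly the cost of discarding that one possible bad subarc above. There is no straightening of $f(\gamma)$ and no endpoint reconciliation via Lemma~\ref{lem:Geometry_collar_width}: the quantity $\ell_Y(f(\gamma))$ is the length of the image curve $f(\gamma)$ itself, with endpoints on $f(\partial X)$, not of its geodesic representative with endpoints on $\partial Y$. So Lemma~\ref{lem:Geometry_collar_width} plays no role here.

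As a side remark, the paper does not re-derive the per-crossing bound $C(\epsilon)=\min\{\epsilon,\log(1+e^{-\ell_X(\beta)/2}\epsilon^2)\}$; it simply quotes the computation from the proof in \cite{PT09}.
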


\begin{proof}
 Let $Y$ be the result of removing an $\epsilon$-strip about $\beta$ from $X$. The map $f: X \to Y$ defined above is an isometry outside of $S_\epsilon$. So we just need to see how the length of $\gamma$ changes inside $S_\epsilon$. Since $f$ is 1-Lipschitz, we have that $\ell_Y(f(\gamma \cap S_\epsilon)) \leq \ell_X(\gamma \cap S_\epsilon)$. We just wish to quantify how much length we lose.

 For each intersection between $\gamma$ and $\beta$, we get a subarc $g$ in $S_\epsilon \cap \gamma$. If both endpoints of $g$ are on $\alpha$, then $g$ must be all of $\gamma$. In particular, at most one intersection between $\gamma$ and $\beta$ contributes such a subarc $g$. So we assume that for the other $i(\gamma, \beta) -1$ intersections, the corresponding subarc $g$ crosses both $\beta$ and at least one of $\beta_1$ and $\beta_2$. Let $g$ be such a subarc. 
 
By the computation in the proof of Proposition 2.2 in \cite{PT09}, 
 \[
 \ell_X(g) - \ell_Y(f(g)) \geq \min\{\epsilon, \log(1+e^{-\ell(\beta)/2}\epsilon^2)\}
 \]
 
Let $C(\epsilon) = \min\{\epsilon, \log(1+e^{-\ell(\beta)/2}\epsilon^2)\}$. Summing this inequality over all $i(\gamma, \beta)-1$ subarcs, we get our result.
\end{proof}
\begin{rem}
  Note that $C(\epsilon)$ goes to 0 as $\epsilon$ goes to 0. In fact, for $\epsilon = \epsilon_{\max}(\ell_X(\beta))$, we have that $C(\epsilon_{\max})$ decays like $e^{-\frac 52 \ell_X(\beta)}$ as $\ell_X(\beta)$ goes to infinity. That is, their ratio tends to a non-zero constant. This follows from the fact that $\epsilon_{\max}$ decays like $e^{-\frac 32 \ell_X(\beta)}$ as $\ell_X(\beta)$ goes to infinity by Remark \ref{rem:decay_of_epsilon_max} and the fact that $\log(1 + x)$ is asymptotic to $x$ as $x$ goes to 0.
\end{rem}

 \section{Surgery on closed surfaces}
 \label{sec:Closed_surfaces} 
 
 \begin{figure}[h!]
 \includegraphics{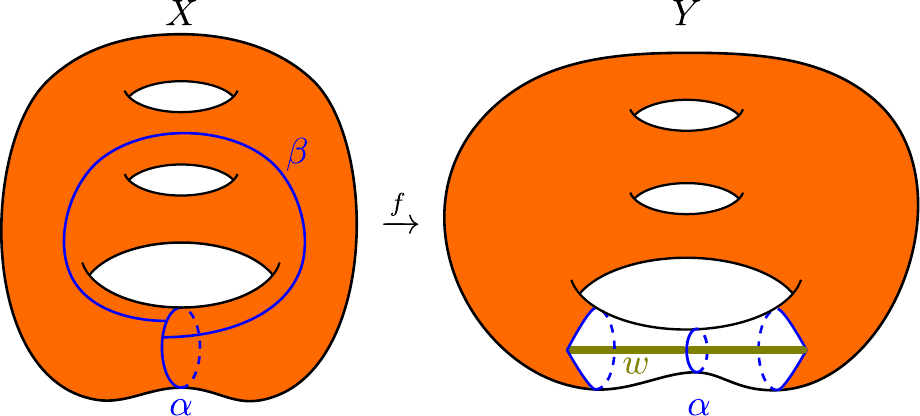}
 \caption{Given a closed surface $X$ with simple closed curve $\alpha$ and arc $\beta$, we obtain a new surface $Y$.}
 \label{fig:Closed_surface_construction}
\end{figure}
 Now let $X$ be a closed hyperbolic surface. For what follows, refer to Figure \ref{fig:Closed_surface_construction}. Let $\alpha$ be a simple closed geodesic on $X$. Given an arc $\beta$ with both endpoints on $\alpha$, we say that both endpoints of $\beta$ are on the \textbf{same side} of $\alpha$ if, given an orientation of $\alpha$, $\beta$ emerges from these two endpoints in the same direction. Otherwise, we say that the endpoints of $\beta$ are on \textbf{either side} of $\alpha$.
 \begin{defi}
 \label{def:Orthogonal_arc_system}
Let $\alpha$ be a simple closed geodesic. We say $\beta = \{\beta^+, \beta^-\}$ is an \textbf{orthogonal arc system} for $\alpha$ (Figure \ref{fig:Orthogonal_arc_system}) if $\beta^+, \beta^-$ are simple geodesic arcs that are orthogonal to $\alpha$ at their endpoints, are disjoint from $\alpha$ and each other in their interior, and either: 
\begin{itemize}
 \item The endpoints of $\beta^+$ are on either side of $\alpha$, and $\beta^-$ is empty, or 
 \item Both endpoints of $\beta^+$ are on one side of $\alpha$, and both endpoints of $\beta^-$ are on the other side of $\alpha$
\end{itemize}
 \end{defi}
  For any metric $X$, and orthogonal arc system $\beta = \{\beta^+, \beta^-\}$ for a simple closed geodesic $\alpha$, we let 
  \[
   \Col_X(\beta) = \Col_X(\beta^+) + \Col_X(\beta^-)
  \]
  to simplify notation later on.

 \begin{figure}[h!]
  \centering 
  \includegraphics{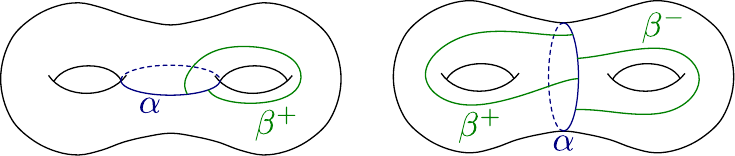}
  \caption{Two possibilities for an orthogonal arc system for $\alpha$}
  \label{fig:Orthogonal_arc_system}
 \end{figure}

 Let $\beta = \{\beta^+, \beta^-\}$ be an orthogonal arc system for some simple closed geodesic $\alpha$ on $X$. For all $\epsilon$ small enough, we describe how to \textbf{remove an $\epsilon$-strip about $\beta$ from $X$} to get a new closed surface $Y$. If $\beta$ consists of two arcs, then $\epsilon$ will refer to the width of the strip about $\beta^+$. The width of the strip about $\beta^-$ will depend on $\epsilon$, so we don't include it in the notation.
 
 \subsection{Constructing $Y_\alpha$} 
 Cut $X$ along $\alpha$ to get a surface $X_\alpha$ with two geodesic boundary components. We first wish to remove strips from $X_\alpha$ to get a new surface with boundary, $Y_\alpha$, whose two boundary components have the same length. There are two cases.
 
 \subsection*{Constructing $Y_\alpha$, case 1.} First suppose $\beta$ consists of just one arc. Then this arc, which we still refer to as $\beta$, joins the two different boundary components of $X_\alpha$. Let $\epsilon_{\max}$ be the constant from Equation (\ref{eq:epsilon_max}). For any $\epsilon < \epsilon_{\max}$, we simply remove an $\epsilon$-strip from $X_\alpha$ to get a new surface $Y_\alpha$.
 
  Suppose $\beta$ joins boundary components $\alpha^+$ and $\alpha^-$. Then we claim that $\ell_Y(\alpha^+) = \ell_Y(\alpha^-)$. We see this as follows. We chose the $\epsilon$-strip $S_\epsilon$ so that the mutual orthogonal between its two sides bisects $\beta$. Thus, the $X$-lengths of $S_\epsilon \cap \alpha^+$ and $S_\epsilon \cap \alpha^-$ are equal. So, if $f: X \to Y$ is the continuous map described in the construction above (Section \ref{subsec:Geometry}), then $\ell_Y(f(\alpha^+)) = \ell_Y(f(\alpha^-))$. Moreover, by symmetry, all four angles of $S_\epsilon \cap X$ are the same. So the acute angles of $f(\alpha^+)$ and $f(\alpha^-)$ are equal. Therefore, the $Y$- geodesic representatives of $\alpha^+$ and $\alpha^-$ must have the same length.

 \subsection*{Constructing $Y_\alpha$, case 2.} Now suppose $\beta$ consists of two (non-empty) arcs, $\beta^+$ and $\beta^-$. Then both endpoints of $\beta^+$ are on a boundary component $\alpha^+$ of $X_\alpha$, and both endpoints of $\beta^-$ are on a boundary component $\alpha^-$.

 We now show that we can remove strips about $\beta^+$ and $\beta^-$ to get a new surface with boundaries of equal length.
\begin{lem}
There is an $\epsilon$ depending only on $\ell_X(\beta^+)$ and $\ell_X(\beta^-)$ so that we can first remove an $\epsilon^+$ strip about $\beta^+$, then an $\epsilon^-$ strip about $\beta^-$, and get a new surface $Y_\alpha$ with
 \[
  \ell = \ell_{Y_\alpha}(\alpha^+) = \ell_{Y_\alpha}(\alpha^-)
 \]
 for any $0< \epsilon^+ < \epsilon$, and some $\epsilon^-$ depending only on $\epsilon^+$.
\end{lem}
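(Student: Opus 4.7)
The plan is to reduce the problem to two applications of Lemma \ref{lem:Geometry_alpha_length} combined with an intermediate value argument. Since $\beta^+$ has both endpoints on $\alpha^+$ and $\beta^-$ has both endpoints on $\alpha^-$, the first surgery (about $\beta^+$) affects only $\alpha^+$ and the second (about $\beta^-$) affects only $\alpha^-$, so the two length changes decouple and can be tuned independently.

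First I would apply Lemma \ref{lem:Geometry_alpha_length} to $\beta^+$ with boundary $\alpha^+$. As $\epsilon^+$ ranges over $(0, \epsilon_{\max}(\ell_X(\beta^+))]$, removing the $\epsilon^+$-strip about $\beta^+$ produces an intermediate surface $X'_\alpha$ in which $\ell_{X'_\alpha}(\alpha^+)$ is a continuous, strictly decreasing bijection onto $[\ell_{\min}(\ell_X(\beta^+)), \ell_X(\alpha))$; call this function $\ell^+(\epsilon^+)$. Because the first surgery is confined to a thin tubular neighborhood of $\beta^+$, for $\epsilon^+$ sufficiently small we have $\ell_{X'_\alpha}(\alpha^-) = \ell_X(\alpha)$ and $\ell_{X'_\alpha}(\beta^-) = \ell_X(\beta^-)$, and $\beta^-$ is still a simple geodesic arc orthogonal to $\alpha^-$ at its endpoints in $X'_\alpha$. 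I would then apply Lemma \ref{lem:Geometry_alpha_length} a second time, now to $\beta^-$ and $\alpha^-$ in $X'_\alpha$, to get that $\ell_{Y_\alpha}(\alpha^-) = \ell^-(\epsilon^-)$ is a continuous bijection of $(0, \epsilon_{\max}(\ell_X(\beta^-))]$ onto $[\ell_{\min}(\ell_X(\beta^-)), \ell_X(\alpha))$. The equation $\ell^+(\epsilon^+) = \ell^-(\epsilon^-)$ then has a (unique) solution $\epsilon^-$ precisely when $\ell^+(\epsilon^+)$ lies in $[\ell_{\min}(\ell_X(\beta^-)), \ell_X(\alpha))$. The upper bound is immediate, and since $\ell^+$ is continuous with $\lim_{\epsilon^+ \to 0^+} \ell^+(\epsilon^+) = \ell_X(\alpha) > \ell_{\min}(\ell_X(\beta^-))$, there is some threshold $\epsilon > 0$, depending only on $\ell_X(\beta^+)$ and $\ell_X(\beta^-)$ (via the functions $\ell^+$, $\ell_{\min}$, and $\epsilon_{\max}$ supplied by Lemma \ref{lem:Geometry_alpha_length}), below which the lower bound also holds.

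The main obstacle is justifying that the two surgeries genuinely decouple, i.e.\ that after removing the $\epsilon^+$-strip about $\beta^+$ the arc $\beta^-$ is unchanged in both length and orthogonality, so that Lemma \ref{lem:Geometry_alpha_length} truly applies to the intermediate surface $X'_\alpha$. The key input is Lemma \ref{lem:Geometry_embedded}, which confines the $\epsilon^+$-strip to a $\tfrac{1}{2}\Col_X(\beta^+)$-neighborhood of $\beta^+$; since the width of this strip transverse to $\beta^+$ shrinks to $0$ with $\epsilon^+$, after possibly shrinking $\epsilon$ further we may ensure that the strip is disjoint from both $\beta^-$ and $\alpha^-$, and in particular that $\beta^-$ survives the first surgery as a geodesic arc of unchanged length meeting the new boundary $\alpha^-$ orthogonally.
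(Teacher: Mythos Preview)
Your proof is correct and follows essentially the same approach as the paper: apply Lemma \ref{lem:Geometry_alpha_length} to each of $\beta^+$ and $\beta^-$ in turn and match the two boundary lengths by the intermediate value theorem. The paper's justification of the decoupling is slightly cleaner---since $\beta^-$ is disjoint from $\beta^+$ and has both endpoints on $\alpha^-$, it cannot enter the $\epsilon^+$-strip about $\beta^+$ for \emph{any} $\epsilon^+ < \epsilon_{\max}(\ell_X(\beta^+))$ (crossing from $\beta_1$ to $\beta_2$ would force an intersection with $\beta^+$), so no further shrinking of $\epsilon$ is needed beyond what Lemma \ref{lem:Geometry_alpha_length} already provides.
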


\begin{proof}
Consider the function $\ell_{\min}$ defined in Lemma \ref{lem:Geometry_alpha_length}. Let
\[
L = \max\{\ell_{\min}(\ell_X(\beta^+)), \ell_{\min}(\ell_X(\beta^-))\}
\]
Then by Lemma \ref{lem:Geometry_alpha_length}, there is an $\epsilon_\beta$ so that for all $\epsilon^+ < \epsilon_\beta$, we can remove an $\epsilon^+$-strip about $\beta^+$, and get a surface $Z_\alpha$ with 
\[
 L \leq \ell_{Z_\alpha}(\alpha^+)  \leq \ell_X(\alpha)
\]
and in fact, we can achieve any value in that range. Choose any $\ell$ with $L < \ell < \ell_X(\alpha)$, and pick $\epsilon$ so that 
\[
\ell_{Z_\alpha}(\alpha^+) = \ell
\]

 Let $f: X_\alpha \to Z_\alpha$ be the induced 1-Lipschitz map. Since $\alpha^-$ is disjoint from $\beta$, $f$ sends $\alpha^-$ to a boundary component of $Z_\alpha$ with $\ell_{Z_\alpha}(\alpha^-) = \ell_X(\alpha)$. Moreover, since $\beta^-$ is disjoint from $\beta^+$, and both of its endpoints are on $\alpha^-$, the arc $\beta^-$ cannot have entered the $\epsilon^+$-strip about $\beta^+$. Thus,
 \[
  \ell_{Z_\alpha}(\beta^-) = \ell_{X}(\beta^-)
 \]
 where the length of $\beta^-$ in $X$ is the same as its length in $X_\alpha$.
  Thus, for the $\ell$ chosen above, we have 
 \[
 \ell_{\min}(\ell_{Z_\alpha}(\beta^-)) < \ell < \ell_{Z_{\alpha}}(\alpha^-)
 \]
 So by Lemma  \ref{lem:Geometry_alpha_length}, there is an $\epsilon^-$ so that when we remove an $\epsilon^-$ strip about $\beta'$, we get a new surface $Y_\alpha$ so that 
 \[
  \ell_{Y_\alpha}(\alpha^-) = \ell
 \]
Since $\beta^-$ is disjoint from $\alpha^+$, we still have $\ell_{Y_\alpha}(\alpha^+) = \ell$. In particular, the two boundary components of $Y_\alpha$ have the same length. 

\end{proof}

So given a curve $\alpha$ and an (ordered) orthogonal arc system $\beta = \{\beta^+, \beta^-\}$, we say we remove an $\epsilon^+$-strip about $\beta$ from $X_\alpha$ to get a new hyperbolic surface $Y_\alpha$. Since $\epsilon^-$ depends on $\epsilon^+$, we only give $\epsilon^+$, and not $\epsilon^-$.

\subsection{Gluing the boundaries}
\label{sec:Gluing_boundaries}
 Next we glue the boundary components of $Y_\alpha$ back together. This is a bit delicate, as we wish to do so without introducing extra partial twisting along $\alpha$.  Let $f: X_\alpha \to Y_\alpha$ be the 1-Lipschitz map induced by removing the strip(s) from $X_\alpha$. (If we had to remove two strips, then $f$ is the composition of the maps we get at each step.) Let $C^+, C^-$ be the two cylinder components of $Y_\alpha \setminus f(X_\alpha)$. If $\alpha^+_Y$ and $\alpha^-_Y$ denote the $Y_\alpha$-geodesic representative of $\alpha^+$ and $\alpha^-$, respectively, then we label the cylinders so that 
 \[
  \partial C^+ = f(\alpha^+) \cup \alpha^+_Y, \text{ and } \partial C^- = f(\alpha^-) \cup \alpha^-_Y
 \]
 
For each point $x \in f(\alpha^+)$, we will describe how to assign a point $p(x) \in \alpha^+_Y$ (Figure \ref{fig:Cylinder_point_assignment}). We assign points in this way in order to make computations easier in Lemma \ref{lem:Length_of_bridge_arcs} below.

If $x$ is a singular point of $f(\alpha^+)$, then we drop a perpendicular from $x$ to the point $p(x) \in \alpha^+_Y$. Cut $C^+$ along one of these perpendiculars. Then we get a hyperbolic polygon $P$, where $P$ is either a quadrilateral, or the union of two quadrilaterals, depending on whether $\beta$ intersects $\alpha^+$ once or twice. In either case, we isometrically identify the portion of $\partial P$ that lies on $f(\alpha^+)$ with the interval $[0,\ell]$ for $\ell = \ell_{Y_\alpha}(f(\alpha^+))$. We also identify the portion of $\partial P$ that lies on $\alpha^+_Y$ with the interval $[0,m]$ for $m = \ell_{Y_\alpha}(\alpha^+)$. If $x$ lies at time $t\ell \in [0, \ell]$ for $t \in [0,1]$, then we let $p(x)$ be the point at time $tm \in[0,m]$. We let $\sigma(x)$ be the geodesic arc in $P$ joining $x$ to $p(x)$.
\begin{figure}[h!]
 \centering 
 \includegraphics{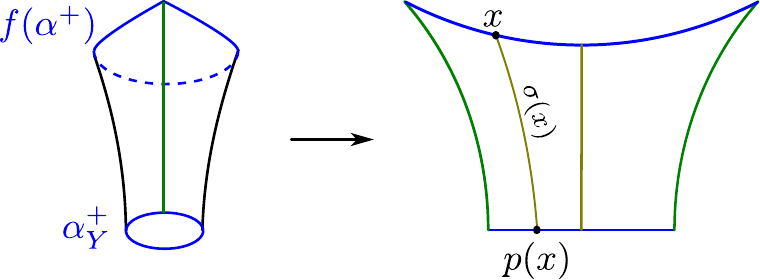}
 \caption{Each point $x$ on $f(\alpha^+)$ is assigned a point $p(x)$ on $\alpha_Y^+$. They are joined by the arc $\sigma(x)$.}
 \label{fig:Cylinder_point_assignment}
\end{figure}

We do the same procedure to $C^-$ to identify each point $x \in f(\alpha^-)$ with a point $p(x) \in \alpha^-_Y$. We then define the arc $\sigma(x)$ in the same way. 

Choose a fixed point $x_0$ on $\alpha$. This gives us two points, $x_0^+$ and $x_0^-$ on $\alpha^+$ and $\alpha^-$, respectively. Mapping these to $Y_\alpha$, we get points $f(x_0^+)$ and $f(x_0^-)$ on $f(\alpha^+)$ and $f(\alpha^-)$. Then we glue $\alpha^+_Y$ to $\alpha^-_Y$ so that the projections of these points, $p(f(x_0^+))$ and $p(f(x_0^-))$, are glued together. We choose the marking in the resulting cylinder $C^+ \cup C^-$ so that the arc $\sigma(f(x_0^+)) \cup \sigma(f(x_0^-))$ has zero twist. Suppressing the dependence on $\beta'$, if any, we call $Y$ the result of removing an $\epsilon$-strip about $\beta$ from $X$.

\subsection{Change in geodesic length}
We will also use the above construction to estimate how the lengths of geodesics change from $X$ to $Y$. For this, we need to work again in $X_\alpha$ and $Y_\alpha$. We use the same assignment as above between points $x \in f(\alpha^\pm)$ and $p(x) \in \alpha^\pm_Y$, and the preferred geodesic segments $\sigma(x)$ between them. We will use the following lemma:
\begin{lem}
\label{lem:Length_of_bridge_arcs}
 The length of the segment $\sigma(z)$ is at most 1 for each $z \in f(\alpha^\pm)$.
\end{lem}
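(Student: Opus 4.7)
The plan is to recognize the polygon $P$ as a Saccheri quadrilateral (or the union of two such), set up Fermi coordinates with the base on $v = 0$, and derive a closed-form expression for $\cosh d(X(t), P(t))$ that is monotone in $|t - 1/2|$. Once monotonicity is established, the maximum distance is attained at the endpoints $t = 0, 1$, where by Lemma \ref{lem:Geometry_collar_width} the distance is the perpendicular side length $w \leq 1$.

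In the case where $\beta$ meets $\alpha^\pm$ in a single point, $P$ is a Saccheri quadrilateral with summit $f(\alpha^\pm)$ of length $\ell$, base $\alpha^\pm_Y$ of length $m < \ell$, and two equal perpendicular sides of length $w \leq 1$ (equal by the symmetry of the $\epsilon$-strip construction around the midpoint of $\beta$). In Fermi coordinates $(u, v)$ with the base on the geodesic $v = 0$ placed from $(0,0)$ to $(m, 0)$, the perpendicular sides end at $(0, w)$ and $(m, w)$, and the summit is the geodesic joining these two points, which by symmetry achieves its minimum at $(m/2, h)$ for some $h \leq w$. Standard Saccheri identities give
\[
\sinh(\ell/2) = \cosh(w) \sinh(m/2), \qquad \cosh(h) \cosh(\ell/2) = \cosh(w) \cosh(m/2).
\]

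Solving the geodesic equation $\cosh^2(v) \dot u = \cosh(h)$ (with arclength normalization) yields the parametrization $\sinh v(\tau) = \sinh(h) \cosh(\tau)$ and $\tanh(u(\tau) - m/2) = \tanh(\tau)/\cosh(h)$, where $\tau \in [-\ell/2, \ell/2]$ is arclength from the midpoint of the summit. Setting $r = t - 1/2$ so that $\tau = r \ell$ and $P(t) = (m/2 + rm, 0)$, the Fermi distance formula $\cosh d = \cosh(v) \cosh(u - tm)$ combined with the hyperbolic angle-subtraction identity for $\cosh(u - tm) = \cosh(A - rm)$ with $A = u(\tau) - m/2$, together with the derived expressions $\cosh A = \cosh(h)\cosh(\tau)/\cosh(v)$ and $\sinh A = \sinh(\tau)/\cosh(v)$, collapses to the clean identity
\[
\cosh d(X(t), P(t)) = \cosh(h) \cosh(r\ell) \cosh(rm) - \sinh(r\ell) \sinh(rm).
\]
Using the Saccheri identities, one checks directly that this evaluates to $\cosh(h)$ at $r = 0$ and $\cosh(w)$ at $r = \pm 1/2$.

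The hard step is the monotonicity of $\cosh d$ in $|r|$. Differentiating,
\[
\tfrac{d}{dr}\cosh d = \sinh(r\ell)\cosh(rm)[\cosh(h)\ell - m] - \cosh(r\ell)\sinh(rm)[\ell - \cosh(h) m].
\]
Since $\cosh(h) \geq 1$ and $\ell > m$, the first bracket $\cosh(h)\ell - m$ is strictly positive. If the second bracket $\ell - \cosh(h) m$ is nonpositive, the derivative is trivially positive for $r > 0$. If both brackets are positive, the desired positivity reduces to $\tanh(r\ell)/\tanh(rm) \geq (\ell - \cosh(h) m)/(\cosh(h)\ell - m)$; the left side is $\geq 1$ (as $\ell \geq m$), while the right side is $\leq 1$ (since $1 \leq \cosh(h)$ forces $\ell - \cosh(h) m \leq \cosh(h)\ell - m$). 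Thus $\cosh d$ is strictly increasing on $[0, 1/2]$, attaining its maximum $\cosh(w)$ at $r = \pm 1/2$, so $d(X(t), P(t)) \leq w \leq 1$ for all $t$. In the case where $\beta$ meets $\alpha^\pm$ in two points, $P$ decomposes into two Saccheri sub-quadrilaterals glued along a common perpendicular of length $\leq 1$, and the same analysis within each piece provides the bound.
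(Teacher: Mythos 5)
Your proof is correct and takes essentially the same route as the paper's: both reduce to the Saccheri/Lambert quadrilateral picture, apply the hyperbolic distance formula for two points lying on perpendiculars erected on the central altitude to obtain $\cosh \ell(\sigma(z)) = \cosh(h)\cosh(r\ell)\cosh(rm) - \sinh(r\ell)\sinh(rm)$ (the paper states the corner-based Lambert-quadrilateral version of this same identity), and then conclude by monotonicity in the parameter that the maximum is the corner value $w \le 1$ supplied by Lemma \ref{lem:Geometry_collar_width}. The only difference is cosmetic: your monotonicity check by differentiation and case analysis is a longer path to what can be read off at a glance from the equivalent form $2\sinh^2(h/2)\cosh(r\ell)\cosh(rm) + \cosh\bigl(r(\ell-m)\bigr)$, a sum of two terms each visibly increasing in $|r|$.
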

\begin{proof}
We will work with $C^+$, as the proof for $C^-$ is identical. Let $x \in f(\alpha^+)$ be a singular point of $f(\alpha^+)$. Then the arc $\sigma(x)$ joining $x$ to $p(x)$ is orthogonal to $\alpha^+_Y$. Note that if $y$ is another singular point of $f(\alpha^+)$, then it lies on the midpoint of $f(\alpha^+) \setminus \{x\}$, so by construction $\sigma(y)$ is also orthogonal to $\alpha^+_Y$.

Cut $C^+$ along $\sigma(x)$ (or along $\sigma(x)$ and $\sigma(y)$ if $f(\alpha^+)$ has two singular points). This cuts $C^+$ into either one or two Sacchieri quadrilaterals, as in the proof of Lemma \ref{lem:Geometry_collar_width}. Recall that, as $x$ is the singular point, we showed in that lemma that the length of $\sigma(x)$ is at most 1 (and is the same as the length of $\sigma(y)$, if there are two singular points). Now, this quadrilateral has a side that lies on $f(\alpha^+)$ and a side that lies on $\alpha^+_Y$. The segment joining the midpoints of these two sides is orthogonal to both, and is $\sigma(x')$ for some point $x'$. Cutting along it, we get two Lambert quadrilaterals. Let $Q$ be one of these two quadrilaterals (Figure \ref{fig:Cylinder_sigma_length}).

\begin{figure}[h!]
 \centering 
 \includegraphics{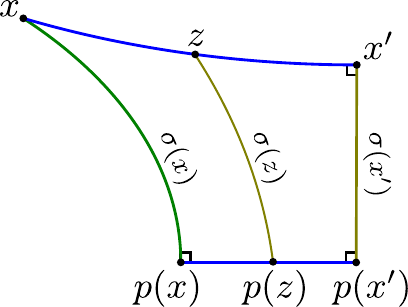}
 \caption{Since $\sigma(x)$ has length at most 1, where $x$ is a singular point, $\sigma(z)$ has length at most 1 for all $z$.}
 \label{fig:Cylinder_sigma_length}
\end{figure}

Suppose $Q$ has vertices $x, x' \in f(\alpha^+)$. Then it has vertices $p(x), p(x') \in \alpha^+_Y$. The arc $\overline{x x'}$ is isometric to the interval $[0,\ell]$ for some $\ell$, and the arc $\overline{p(x) p(x')}$ is isometric to the interval $[0,m]$ for some $m$. As we have cut $C^+$ along arcs of the form $\sigma(\cdot)$, if $z \in \overline{x x'}$ is a point at time $t\ell$, then $p(z)$ is a point at time $tm$. Let $Q'$ be the quadrilateral with vertices $x' z p(z) p(x')$. Then $Q'$ is a quadrilateral contained in $Q$, that shares the side $\overline{x' p(x')}$. Note that $Q'$ has two right angles, at $x'$ and $p(x')$, but the angle at $p(z)$ will not be a right angle unless $z = x$ or $z = x'$. 

We can use the hyperbolic laws of cosines and sines to show that
\[
 \cosh \ell(\overline{z p(z)}) = 2\sinh^2 \ell(\overline{x' p(x')}) \cosh(t\ell) \cosh(tm) + \cosh(t\ell - tm)
\]
and likewise,
\[
 \cosh \ell(\overline{x p(x)}) = 2\sinh^2 \ell(\overline{x' p(x')}) \cosh(\ell) \cosh(m) + \cosh(\ell - m)
\]
As $0 \leq t \leq 1$, we see that $\cosh \ell(\overline{z p(z)}) \leq \cosh \ell(\overline{x p(x)})$. In particular, $\ell(\sigma(z)) = \ell(\overline{z p(z)}) \leq 1$.
\end{proof}

Now we can estimate how much the length of a geodesic current changes from $X$ to $Y$. For this, we need to define the intersection between a current $\mu$ and an arc $\beta$. 
\begin{defi}
\label{def:Intersection_with_arcs}
 Let $\mu \in \C(S)$. Identify the universal cover of $X$ with $\h$. Let $\tilde \beta$ be any lift of an arc $\beta$ on $X$. Let $I(\tilde \beta)$ be the set of all complete geodesics in $\h$ that intersect $\tilde \beta$. Then the \textbf{intersection number of $\mu$ and $\beta$} is defined to be
 \[
  i(\mu, \beta) = \mu(I(\tilde \beta))
 \]
\end{defi}
\begin{figure}[h!]
 \centering 
 \includegraphics{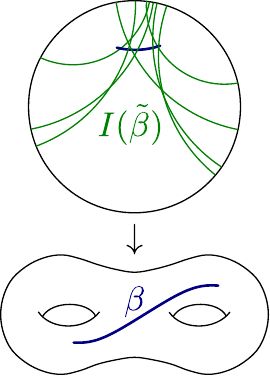}
 \caption{Defining the intersection number between an arc an a current}
 \label{fig:ArcIntersection}
\end{figure}

Since $\mu$ is $\pi_1(S)$-invariant, we see that $i(\mu, \beta)$ is independent of the lift we chose of $\beta$. Thus, the intersection number between currents and arcs is well-defined. We also see immediately that intersection number between a current and an arc is linear in $\mu$. Moreover, if $\mu_n$ is a sequence of currents converging to a current $\mu$, then if we take $\beta$ to be an open arc, so that $I(\beta)$ is open, then
\[
 \liminf i(\mu_n, \beta) \geq i(\mu, \beta)
\]

\begin{lem}
\label{lem:Length_change_closed}
 Let $\mu$ be a geodesic current. Let $X\in \T(S)$. Let $\alpha$ be a simple closed geodesic in $X$, and let $\beta = \{\beta^+, \beta^-\}$ be an orthogonal arc system for $\alpha$ with $i(\mu, \beta^+) \geq i(\mu, \beta^-)$. Let $Y$ be the result of removing an $\epsilon$-strip about $\beta$ from $X$. Then,
 \[
  i(X, \mu) - i(Y, \mu) \geq \frac 12 C(\epsilon)  i(\mu, \beta) - \Big (2+4\Col_X(\beta) + C(\epsilon)\Big ) i(\mu, \alpha)
 \]
where we define $\Col_X(\beta) = \Col_X(\beta^+) + \Col_X(\beta^-)$, and $C(\epsilon)$ is the constant from Lemma \ref{lem:Length_change_arcs}.
\end{lem}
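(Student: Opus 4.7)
The plan is to prove the inequality first when $\mu$ is the Dirac current on a closed geodesic $\gamma$, and then extend to all currents by continuity and linearity of the intersection pairing together with density of weighted closed geodesics in $\C(S)$. So assume $\mu=\gamma$ and let $k=i(\gamma,\alpha)$. Cutting $\gamma$ along $\alpha$ produces arcs $\gamma_1,\ldots,\gamma_k$ in $X_\alpha$, each having one endpoint on $\alpha^+$ and one on $\alpha^-$. Removing the $\epsilon^+$-strip about $\beta^+$ and then the $\epsilon^-$-strip about $\beta^-$ can only shorten $f(\gamma_i)$, so applying Lemma \ref{lem:Length_change_arcs} to each $\gamma_i$ for the strip removal about $\beta^+$ alone and summing gives
\[
\ell_X(\gamma) - \sum_{i=1}^k \ell_{Y_\alpha}(f(\gamma_i)) \;\geq\; C(\epsilon)\,i(\gamma,\beta^+) - C(\epsilon)\,i(\gamma,\alpha).
\]

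To upper bound $\ell_Y(\gamma)$, I would construct a closed path $\tilde\gamma\subset Y$ freely homotopic to $\gamma$. At each intersection $q_i$, splice the two arcs $f(\gamma_{i-1}),f(\gamma_i)$ meeting at $q_i^\pm$ via three bridging segments: a $\sigma$-arc from $f(q_i^+)\in f(\alpha^+)$ to $p(f(q_i^+))\in\alpha^+_Y$, the analogous $\sigma$-arc on the $\alpha^-$ side, and an arc along $\alpha_Y$ joining these two paired points after the gluing of $\alpha^+_Y$ to $\alpha^-_Y$. By Lemma \ref{lem:Length_of_bridge_arcs} each $\sigma$-arc has length at most $1$, contributing at most $2\,i(\gamma,\alpha)$ in total.

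The main obstacle is to bound the total length of the $\alpha_Y$-bridges. The idea is to study the induced parameter maps $\phi^\pm\colon \alpha\to\alpha^\pm_Y$ obtained from $p\circ f|_{\alpha^\pm}$. The map $f|_{\alpha^\pm}$ collapses precisely the subarcs where $S_{\epsilon^\pm}$ meets $\alpha^\pm$, and by Lemma \ref{lem:Geometry_embedded} each such subarc has length $2a^\pm<\Col_X(\beta^\pm)$, with at most two subarcs on each side. The linear reparametrization $p$ is contracting, and the zero-twist gluing condition ensures $\phi^+$ and $\phi^-$ agree at the basepoint $x_0$. A direct accounting of how far apart $\phi^+$ and $\phi^-$ can drift, together with the doubling that can occur because strips on both sides of $\alpha$ each contribute independently to the two parameter maps, yields
\[
|\phi^+(t)-\phi^-(t)| \;\leq\; 4\Col_X(\beta)\qquad\text{for all } t,
\]
so each $\alpha_Y$-bridge has length at most $4\Col_X(\beta)$, contributing at most $4\Col_X(\beta)\,i(\gamma,\alpha)$ in total.

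Assembling the estimates, $\ell_Y(\gamma)\leq\ell_Y(\tilde\gamma)\leq \sum_i \ell_{Y_\alpha}(f(\gamma_i)) + \bigl(2+4\Col_X(\beta)\bigr)i(\gamma,\alpha)$, which combined with the arc-length bound above gives
\[
\ell_X(\gamma)-\ell_Y(\gamma)\;\geq\; C(\epsilon)\,i(\gamma,\beta^+) - \bigl(2+4\Col_X(\beta)+C(\epsilon)\bigr)i(\gamma,\alpha).
\]
Integrating against $\mu$ and using the hypothesis $i(\mu,\beta^+)\geq i(\mu,\beta^-)$, which gives $i(\mu,\beta^+)\geq \tfrac{1}{2}i(\mu,\beta)$, yields the claimed inequality. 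The delicate part is the parameter-drift bound of the preceding paragraph; everything else is a matter of carefully packaging Lemmas \ref{lem:Length_change_arcs} and \ref{lem:Length_of_bridge_arcs}.
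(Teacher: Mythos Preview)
Your proposal is correct and follows essentially the same argument as the paper's proof: reduce to closed geodesics, apply Lemma~\ref{lem:Length_change_arcs} to the $\beta^+$-strip (using only that the second collapse is $1$-Lipschitz), build a homotopic loop in $Y$ from the $\sigma$-arcs of Lemma~\ref{lem:Length_of_bridge_arcs} together with $\alpha_Y$-bridges bounded via $a\le\tfrac12\Col_X(\beta^\pm)$, and then pass to general currents by density. Two harmless slips: the arcs $\gamma_i$ need not each run from $\alpha^+$ to $\alpha^-$ (this plays no role in the estimate), and ``integrating against $\mu$'' should be read as the density-and-limits argument you already announced in your first sentence.
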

Recall that when we remove an $\epsilon$-strip about $\beta = \{\beta^+, \beta^-\}$, the $\epsilon$ refers to the width of the strip about $\beta^+$. Thus, $C(\epsilon)$ is the constant from Lemma \ref{lem:Length_change_arcs} defined for $\beta^+$.
\begin{proof}
 First, we work with a closed geodesic $\gamma$ on $X$. We do the case where $\beta$ is just a single arc first, and then explain what needs to change when it consists of two arcs. 
 
 We first describe how to make a new curve $\gamma_Y$ in the free homotopy class of $\gamma$ on $Y$, and then we estimate its length. Let $X_\alpha$ be the result of cutting $X$ along $\alpha$. Cutting $\gamma$ along $\alpha$, we get a collection of geodesic arcs on $X_\alpha$. Abusing notation, we still call this collection of arcs $\gamma$. Let $Y_\alpha$ be the result of cutting $Y$ along $\alpha$. Let $f: X_\alpha \to Y_\alpha$ be the induced 1-Lipschitz map. 
 
 Refer to Figure \ref{fig:Closed_surface_mapping_curves} for what follows. Each point $x$ on $\gamma \cap \alpha$ in $X$ corresponds to endpoints $x^+$ on $\alpha^+$ and $x^-$ on $\alpha^-$ of $\gamma$ on $X_\alpha$. Then $f(\gamma)$ has endpoints $f(x^+)$ and $f(x^-)$ on $f(\alpha^+)$ and $f(\alpha^-)$, respectively. Concatenate $f(\gamma)$ at these points with arcs $\sigma(f(x^+))$ and $\sigma(f(x^-))$ in $Y_\alpha$ joining $f(x^\pm)$ to points $p(f(x^\pm))$ on $\alpha_Y^\pm$, respectively. See Section \ref{sec:Gluing_boundaries} and Figure \ref{fig:Cylinder_point_assignment} for a definition of the arcs $\sigma$. Once we glue $\alpha^+_Y$ to $\alpha^-_Y$ to get $Y$, the points $p(f(x^+))$ and $p(f(x^-))$ might not coincide. We join them with an arc $\delta(x)$ in $\alpha_Y$ that does not intersect our distinguished point $p(f(x^+_0))$ that we used to glue $\alpha^+_Y$ to $\alpha^-_Y$.
 
 By Lemma \ref{lem:Length_of_bridge_arcs}, the length of $\sigma(f(x))$ is at most 1 for all $x$. Moreover, we will show that $\ell_Y(\delta(x))$ is at most $4 \Col_X(\beta)$. To see this, note that the map from $X_\alpha$ to $Y_\alpha$ contracts $\alpha^+$ and $\alpha^-$ along subarcs of length $2a$, where $a$ is the constant from Equation (\ref{eq:define_a}). By construction, and the computations in the proof of Lemma \ref{lem:Geometry_embedded}, 
 \[
  a \leq \frac 12 \Col_X(\beta)
 \]
But the assignment of points from $f(\alpha^\pm)$ to $\alpha^\pm_Y$ shrinks distances. Thus, $p(f(x^+))$ and $p(f(x^-))$ are at most distance $4 \Col_X(\beta)$ apart. Since the assignment of points on $f(\alpha^\pm)$ to points on $\alpha^\pm_Y$ preserves the order of points, and $p(f(x^+_0))$ is glued to $p(f(x^-_0))$, the shortest arc from $p(f(x^+))$ to $p(f(x^-))$ does not intersect $p(f(x^+_0))$. In particular, this shortest arc is exactly $\delta(x)$. Thus, we join points $f(x^+)$ and $f(x^-)$ with a piecewise geodesic arc of length at most $2 + 4\Col_X(\beta)$.

\begin{figure}[h!]
 \centering 
 \includegraphics{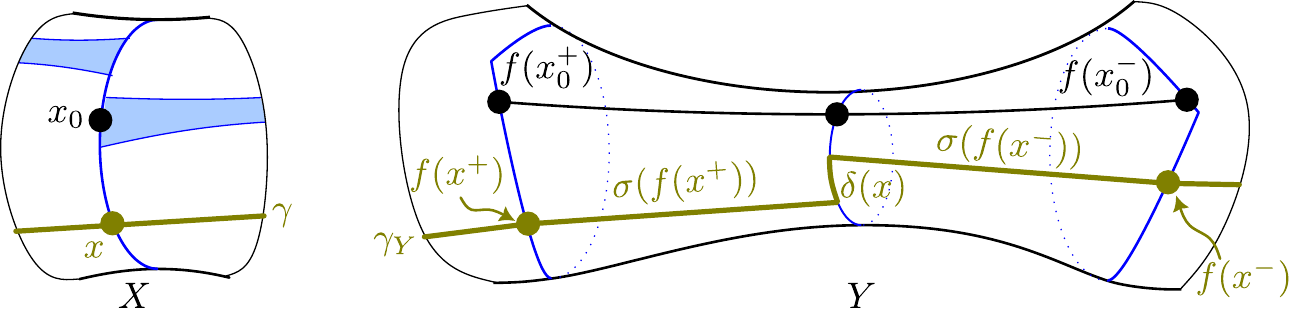}
 \caption{A curve $\gamma$ and $\gamma_Y$ in neighborhoods of $\alpha$ and $\alpha_Y$, respectively. Note that $\delta(x)$ does not intersect the arc joining $f(x_0^+)$ and $f(x_0^-)$.}
 \label{fig:Closed_surface_mapping_curves}
\end{figure}

Concatenating $f(\gamma)$ with all of the above arcs gives a curve $\gamma_Y$ on $Y$ that is in the free homotopy class of $\gamma$. In Lemma \ref{lem:Length_change_arcs}, we estimated how the length of each arc changes from $X_\alpha$ to $Y_\alpha$. Summing this estimate over all the arcs in $\gamma$ gives us 
\[
 \ell_Y(f(\gamma)) < \ell_X(\gamma) - C(\epsilon) \cdot  i(\gamma, \beta) + C(\epsilon)i(\gamma, \alpha)
\]
as there are $i(\gamma, \alpha)$ arcs. Note that if some arc $\gamma_i$ does not intersect $\beta$, then $i(\gamma_i, \beta) = 0$, and $\ell_Y(f(\gamma_i)) < \ell_X(\gamma_i)$ because $f$ is 1-Lipschitz. In particular, we still have $\ell_Y(f(\gamma_i)) < \ell_X(\gamma_i) + C(\epsilon)$, so we can treat these arcs the same as arcs that do intersect $\beta$.

Then, the arcs we use to join the endpoints of $f(\gamma)$ to form $\gamma_Y$ each have length at most $2 + 4\Col_X(\beta)$. As there are exactly $i(\gamma, \alpha)$ such arcs, we have
\[
 \ell_Y(\gamma) < \ell_X(\gamma) - C(\epsilon) \cdot  i(\gamma, \beta) + \Big (2+4\Col_X(\beta)+ C(\epsilon)\Big )i(\gamma, \alpha)
\]
which is exactly what we want.

Now, suppose $\beta$ consists of (non-empty) arcs $\beta^+$ and $\beta^-$. Suppose $i(\gamma, \beta^+) > i(\gamma, \beta^-)$. Then we first remove an $\epsilon = \epsilon^+$-strip about $\beta^+$ from $X_\alpha$ to get a surface $Z_\alpha$, and then we remove an $\epsilon^-$-strip about $\beta^-$ from $Z_\alpha$ to get $Y_\alpha$. Let 
\begin{align*}
 f^+&: X_\alpha \to Z_\alpha\\
 f^-&: Z_\alpha \to Y_\alpha
\end{align*}
be the induced 1-Lipschitz maps.

Let $f = f^- \circ f^+$. Again, we let $\gamma$ denote the collection of arcs we get by cutting $\gamma$ along $\alpha$. Applying Lemma \ref{lem:Length_change_arcs}, we see 
\[
 \ell_{Z_{\alpha}}(f^+(\gamma))  < \ell_{X_\alpha}(\gamma) - C(\epsilon^+) \cdot  i(\gamma, \beta^+) + C(\epsilon^+)i(\gamma, \alpha^+)
\]
Since $f^-$ is a 1-Lipschitz map, so we have \[\ell_{Y_\alpha}(f(\gamma)) < \ell_{Z_{\alpha}}(f^+(\gamma))\]

Now we construct the curve $\gamma_Y$ on $Y$ exactly as above.  Note that the map $f^+$ contracts $\alpha^+$ by an arc of length $4a^+$, where $a^+$ is the constant in Equation \ref{eq:define_a} that is bounded above by $\Col_X(\beta^+)$. Likewise, since $\beta^+$ and $\beta^-$ are disjoint, $\beta^-$ cannot enter the $\epsilon^+$-strip about $\beta^+$. Thus, $\ell_Z(f^+(\beta^-)) = \ell_X(\beta^-)$. So, $f^-$ contracts $\alpha^-$ by an arc of length $4a^-$, for $a^- < \Col_X(\beta^-)$. Then $\ell_Y(\gamma_Y)$ is bounded above by $\ell_Y(f(\gamma))$ plus $2 +4\Col_X(\beta^+) + \Col_X(\beta^-)$ for each point of $\gamma$ on $\alpha$ in $X$. In other words, 
\[
 \ell_Y(\gamma_Y) = \ell_{Y_{\alpha}}(f(\gamma)) + (2 + 4\Col_X(\beta)) i(\gamma, \alpha)
\]
where we define $\Col_X(\beta) = \Col_X(\beta^+) + \Col_X(\beta^-)$ to simplify notation. 

Using our estimate of $\ell_Y(\gamma_Y)$ above and the fact that $i(\gamma, \beta^+) \geq i(\gamma, \beta^-)$, we get
\begin{equation}
\label{eq:Length_change_closed}
 \ell_Y(\gamma) < \ell_{X}(\gamma) - \frac 12 C(\epsilon^+) \cdot  i(\gamma, \beta) + \Big (2 + 4\Col_X(\beta) + C(\epsilon^+)\Big )i(\gamma, \alpha)
\end{equation}

Next, take a current $\mu$. Suppose $c_n \gamma_n \to \mu$ for $c_n > 0$ and $\gamma_n$ a sequence of closed geodesics. Then for each $n$, we can rewrite Equation (\ref{eq:Length_change_closed}) in terms of intersection number so that
\[
i(Y,c_n\gamma_n) < i(X,c_n\gamma_n) - \frac 12 C(\epsilon) \cdot  i(c_n\gamma_n, \beta) + \Big (2 + 4\Col_X(\beta) + C(\epsilon^+)\Big)i(c_n\gamma_n, \alpha)
\]
As the usual intersection number is continuous, and intersection between currents and open arcs is lower semicontinuous, we can take limits to get 
\[
 i(Y, \mu) \leq i(X,\mu) - \frac 12 C(\epsilon) \cdot  i(\mu, \beta) + \Big (2 + 4\Col_X(\beta) + C(\epsilon^+)\Big )i(\mu, \alpha)
\]
as desired.
\end{proof}

\section{Mixed collar lemma for length minimizers}
\label{sec:Mixed_collar_theorem}
We are now ready to prove various mixed collar lemmas. First, we will prove a version for a simple closed curve and an orthogonal arc system. 
\begin{prop}
\label{prop:Mixed_Collar_Orthogonal_Arcs}
 Let $\mu$ be a filling current, and let $\pi(\mu)$ be its length minimizer. Let $\alpha$ be any simple closed geodesic, and let $\beta = \{\beta^+, \beta^-\}$ be an orthogonal arc system for $\alpha$. Then,
 \[
  i(\mu, \alpha) > \delta(\ell_{\pi(\mu)}(\beta)) \; i(\mu, \beta)
 \]
 where $\delta$ is a continuous, decreasing function $\delta : \R_+ \to \R_+$ depending only on the Euler characteristic $\chi(S)$. Moreover, $\delta$ is bounded above, and goes to zero as $x$ goes to infinity.
\end{prop}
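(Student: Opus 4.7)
The strategy is exactly the one outlined in Section \ref{sec:Mixed_collar_idea}: perform the closed-surface surgery of Section \ref{sec:Closed_surfaces} on $X := \pi(\mu)$, and then exploit the fact that $\pi(\mu)$ minimizes $\ell_{\cdot}(\mu)$ to extract an inequality between $i(\mu,\alpha)$ and $i(\mu,\beta)$.

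First, after relabeling if necessary, I assume $i(\mu,\beta^+)\geq i(\mu,\beta^-)$, so that Lemma \ref{lem:Length_change_closed} applies with its hypothesis satisfied. For any $\epsilon\in(0,\epsilon_{\max}(\ell_X(\beta^+)))$, let $Y\in\T(S)$ be the hyperbolic metric obtained by removing an $\epsilon$-strip about $\beta$ from $X$ via the construction of Section \ref{sec:Closed_surfaces}. Since $\pi(\mu)$ is the length minimizer,
\[
i(X,\mu)\leq i(Y,\mu),
\]
so the left-hand side of the estimate in Lemma \ref{lem:Length_change_closed} is non-positive. Combining this with the conclusion of that lemma gives
\[
0\;\geq\; \tfrac{1}{2}C(\epsilon)\, i(\mu,\beta)\;-\;\bigl(2+4\Col_X(\beta)+C(\epsilon)\bigr)\, i(\mu,\alpha),
\]
which rearranges to
\[
i(\mu,\alpha)\;\geq\;\frac{C(\epsilon)}{2\bigl(2+4\Col_X(\beta)+C(\epsilon)\bigr)}\; i(\mu,\beta).
\]

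Next I turn this into a function of $\ell_X(\beta)=\ell_{\pi(\mu)}(\beta)$ alone. Since $\ell_X(\beta^+)\leq\ell_X(\beta)$ and $\epsilon_{\max}$ is decreasing (Remark \ref{rem:decay_of_epsilon_max}), the value $\epsilon:=\epsilon_{\max}(\ell_X(\beta))$ is an admissible choice for the surgery. Substituting this choice, bounding $C(\epsilon)$ below by its value at the worst case $\ell_X(\beta^+)=\ell_X(\beta)$ (this uses monotonicity of $C(\epsilon)=\min\{\epsilon,\log(1+e^{-\ell_X(\beta^+)/2}\epsilon^2)\}$ in $\ell_X(\beta^+)$), and bounding $\Col_X(\beta)=\Col_X(\beta^+)+\Col_X(\beta^-)$ uniformly in terms of $\ell_X(\beta)$, I define
\[
\delta(x)\;:=\;\inf_{\substack{x^+,x^-\geq 0\\ x^++x^-=x}}\;\frac{C(\epsilon_{\max}(x))}{2\bigl(2+4(\Col(x^+)+\Col(x^-))+C(\epsilon_{\max}(x))\bigr)},
\]
with the convention that the second term in $\Col(x^+)+\Col(x^-)$ is omitted in the one-arc case. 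The preceding inequality then gives $i(\mu,\alpha)\geq\delta(\ell_{\pi(\mu)}(\beta))\, i(\mu,\beta)$.

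The last step is to verify that $\delta$ has the required qualitative properties. Continuity is immediate from continuity of $\epsilon_{\max}$, $\Col$ and $C$ on the interior of their domains (the infimum over a closed interval of a jointly continuous function is continuous in the parameter). Monotone decrease of $\delta$ follows because $\epsilon_{\max}$ and $C(\epsilon_{\max})$ are decreasing, while the denominator is weakly increasing as $x$ grows; the only subtlety is that $\Col$ diverges as its argument tends to $0$, but this is precisely compensated by taking the infimum over splittings, yielding a clean decreasing function. Boundedness above is clear from the bound $C(\epsilon_{\max})\leq\epsilon_{\max}\leq\tanh 1$. Finally, the decay $\delta(x)\to 0$ as $x\to\infty$ comes from the asymptotics $\epsilon_{\max}(x)\asymp e^{-3x/2}$ in Remark \ref{rem:decay_of_epsilon_max}, together with the observation that the numerator $C(\epsilon_{\max}(x))$ decays at least as fast as $e^{-5x/2}$ while the denominator stays bounded below.

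The main obstacle is the last paragraph: carefully controlling the interaction between the two arcs of the orthogonal system (they can have very different lengths, and $\Col(\beta^-)$ can be large while $\Col(\beta^+)$ is small), and packaging the resulting constant into a single function of $\ell_{\pi(\mu)}(\beta)$ that is continuous, decreasing, and decays to zero. Everything else is a direct assembly of the surgery construction of Section \ref{sec:Closed_surfaces}, Lemma \ref{lem:Length_change_closed}, and the length-minimizing property of $\pi(\mu)$.
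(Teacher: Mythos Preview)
Your approach is essentially identical to the paper's: perform the closed-surface surgery on $X=\pi(\mu)$, apply Lemma~\ref{lem:Length_change_closed}, and use the minimality of $\pi(\mu)$ to force the right-hand side to be nonpositive, then rearrange. The paper simply takes $\epsilon=\tfrac12\epsilon_\beta$ (the midpoint of the admissible range, which depends on both $\ell(\beta^+)$ and $\ell(\beta^-)$) and, rather than your explicit infimum over splittings $x^++x^-=x$, just observes that the resulting coefficient is a continuous positive function tending to zero and bounds it below by a decreasing $\delta$.

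One point to tighten: your justification that $\epsilon:=\epsilon_{\max}(\ell_X(\beta))$ is admissible only appeals to $\epsilon_{\max}(\ell_X(\beta))\le\epsilon_{\max}(\ell_X(\beta^+))$, which is the correct criterion in the one-arc case. In the two-arc case the admissible range $(0,\epsilon_\beta)$ comes from the lemma in Section~\ref{sec:Closed_surfaces} and depends on both $\ell(\beta^+)$ and $\ell(\beta^-)$; it is not immediate that $\epsilon_{\max}(\ell_X(\beta))\le\epsilon_\beta$. The paper's choice $\epsilon=\tfrac12\epsilon_\beta$ avoids this issue, and since $\epsilon_\beta$ is determined by $\ell(\beta^+),\ell(\beta^-)$ your infimum-over-splittings packaging still goes through with that choice.
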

\begin{proof}
Given $\beta = \{\beta^+,\beta^-\}$, there is a range of values $(0, \epsilon_\beta)$ so that for all $\epsilon \in (0, \epsilon_\beta)$, we can remove an $\epsilon$-strip about $\beta$. Note that $\epsilon_\beta$ depends only on $\ell_{\pi(\mu)}(\beta^+)$ and $\ell_{\pi(\mu)}(\beta^-)$. Choose $\epsilon = \frac 12 \epsilon_\beta$. Then by Lemma \ref{lem:Length_change_closed},
  \[
  \ell_{\pi(\mu)}(\mu) - \ell_Y(\mu) \geq \frac 12 C(\epsilon) \cdot  i(\mu, \beta) - \Big(2  + 4\Col_{\pi(\mu)}(\beta) + C(\epsilon)\Big)i(\mu, \alpha)
 \]
 where $\Col_{\pi(\mu)}(\beta) = \Col_{\pi(\mu)}(\beta^+)+\Col_{\pi(\mu)}(\beta^-)$, and $C(\epsilon)$ is the constant from Lemma \ref{lem:Length_change_arcs}.
 
 But $\pi(\mu)$ is the length minimizer of $\mu$. Thus, we must have $\ell_{\pi(\mu)}(\mu) - \ell_Y(\mu) < 0$, so
 \[
 \frac 12 C(\epsilon) \cdot  i(\mu, \beta) - (2  + 4\Col_{\pi(\mu)}(\beta) + C(\epsilon))i(\mu, \alpha) < 0
 \]
In particular, 
\[
i(\mu, \alpha) > \frac{C(\epsilon)}{4  + 8\Col_{\pi(\mu)}(\beta) + 2C(\epsilon)} i(\mu, \beta)
\]
By our choice of $\epsilon$, the constant
\[
d(\ell_{\pi(\mu)}(\beta)) = \frac{C(\epsilon)}{4  + 8\Col_{\pi(\mu)}(\beta) + C(\epsilon)}
\]
only depends on the lengths of $\ell(\beta^+), \ell(\beta^-)$ and $\chi(S)$.
Moreover, as $\ell_{\pi(\mu)}(\beta)$ goes to infinity, $\epsilon_\beta$ will go to 0, so $C(\epsilon)$ goes to zero. Thus, $d(\ell_{\pi(\mu)}(\beta))$ will go to 0. The fact that $d(x)$ is uniformly bounded above is also clear. Lastly, as $d(x)$ is a continuous, strictly positive function that goes to 0 as $x$ goes to infinity, we can bound it from below by a decreasing, strictly positive function $\delta(x)$ that also goes to 0 as $x$ tends to infinity. We use this function from now on.
\end{proof}

\subsection{Twisting numbers}
To prove the mixed collar theorem for pairs of simple closed curves, we need to define a twisting number for a simple arc or closed curve $\beta$ about a simple closed curve $\alpha$. First, let $X$ be a closed hyperbolic surface, and let $\alpha$ be a simple closed geodesic on $X$. Let $\beta$ be any simple arc with both endpoints on $\alpha$. Let $\beta'$ be the arc that is orthogonal to $\alpha$ at its endpoints, and that is homotopic to $\beta$ via a homotopy that keeps its endpoints on $\alpha$. 
\begin{figure}[h!]
 \centering 
 \includegraphics{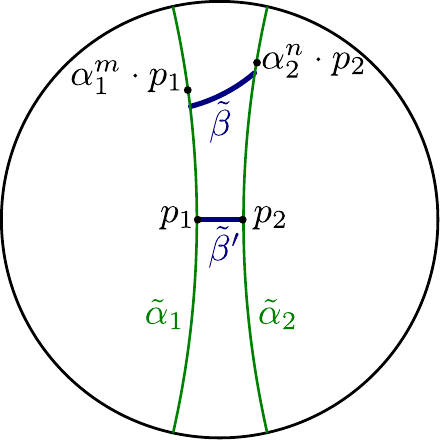}
 \caption{}
 \label{fig:Intersection_and_twisting}
\end{figure}
Lift $\beta'$ to an arc $\tilde \beta'$ in the universal cover so that it has endpoints $p_1, p_2$ on lifts $\tilde \alpha_1$ and $\tilde \alpha_2$ of $\alpha$, respectively  (Figure \ref{fig:Intersection_and_twisting}). Since $\beta'$ and $\beta$ are homotopic, there exists a lift $\tilde \beta$ of $\beta$ with endpoints on $\tilde \alpha_1$ and $\tilde \alpha_2$. Suppose $\tilde \beta$ has endpoints $q_1 \in \tilde \alpha_1$ and $q_2 \in \tilde \alpha_2$. Take elements $\alpha_1, \alpha_2 \in \pi_1(X)$ that act by translation on $\tilde \alpha_1$ and $\tilde \alpha_2$, respectively. Choose $\alpha_1$ so that $q_1$ is between $p_1$ and the attracting fixed point of $\alpha_1$, and do the same for $\alpha_2$ with respect to $p_2$.

If $x,y$ are points on $\tilde \alpha_2$, then the interval $(x,y]$ is the subarc of $\tilde \alpha_2$ of all points between $x$ and $y$, including $x$ and excluding $y$. Then we have
\[
 q_1 \in (\alpha_1^{m-1} \cdot p_1, \alpha_1^m \cdot p_1] \text{ and } q_2 \in (\alpha_2^{n-1} \cdot p_2, \alpha_2^n \cdot p_2]
\]
for $n, m \in \N$. We let 
\[
 \tau_\alpha(\beta) = n + m
\]
(If $p_1 = q_1$, we take $m = 0$, and likewise if $p_2 = q_2$). We say $\tau_\alpha(\beta)$ is the \textbf{twisting number} of the arc $\beta$ about $\alpha$. Note that this means $\tau_\alpha(\beta) \geq 2$ for all $\beta$.

Suppose now that $\beta$ is a simple closed geodesic that intersects $\alpha$. Then $\alpha$ cuts $\beta$ into arcs $\beta_1, \dots, \beta_k$ for $k = i(\alpha, \beta)$. We then say 
\[
 \tau_\alpha(\beta) = \sum_i \tau_\alpha(\beta_i)
\]

While the twisting number will allow us to go from the mixed collar lemma for a curve and an arc (Proposition \ref{prop:Mixed_Collar_Orthogonal_Arcs}) to the mixed collar lemma for pairs of curves (Theorem \ref{thm:Full_Collar}), it turns out we can avoid using it in the statement of the theorem. We will instead use the following inequality:

\begin{lem}
\label{lem:Bound_Twisting_by_Length}
Let $\alpha$ and $\beta$ be two simple closed geodesics on $X$ with $i(\alpha, \beta) \geq 1$. Then 
\[
 \tau_\alpha(\beta) \leq 2\frac{\ell_X(\beta)}{\Col_X(\beta)} + 4i(\alpha, \beta)
\]
\end{lem}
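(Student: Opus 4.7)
My plan is to cut $\beta$ along $\alpha$ into arcs, bound the twist contribution of each arc via hyperbolic trigonometry in $\h$, and then trade the resulting $\ell_X(\alpha)$ in the denominator for $\Col_X(\beta)$ using the standard collar lemma for an intersecting pair. Let $\beta_1,\dots,\beta_k$ be the $k=i(\alpha,\beta)$ arcs of $\beta\setminus\alpha$, so $\tau_\alpha(\beta)=\sum_i\tau_\alpha(\beta_i)$ by definition, and it suffices to bound each summand.

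Fix one arc $\beta_i$, lift it to $\tilde\beta_i\subset\h$ with endpoints $q_1\in\tilde\alpha_1$ and $q_2\in\tilde\alpha_2$, let $p_1,p_2$ be the feet of the common perpendicular of $\tilde\alpha_1,\tilde\alpha_2$, and set $a_i=d_{\tilde\alpha_1}(p_1,q_1)\geq 0$, $b_i=d_{\tilde\alpha_2}(p_2,q_2)\geq 0$ (with the sign convention from the definition of $\tau_\alpha$, which chooses $q_1,q_2$ on the attracting sides of $\alpha_1,\alpha_2$). Writing $\tau_\alpha(\beta_i)=m_i+n_i$, the definition directly gives $m_i\leq a_i/\ell_X(\alpha)+1$ and $n_i\leq b_i/\ell_X(\alpha)+1$, hence $\tau_\alpha(\beta_i)\leq(a_i+b_i)/\ell_X(\alpha)+2$.

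The main geometric step is to show $\ell_X(\beta_i)=d_{\h}(q_1,q_2)\geq a_i+b_i$. In the right-angled quadrilateral $p_1p_2q_2q_1$, the standard hyperbolic identity reads
\[
\cosh d(q_1,q_2)=\cosh a_i\cosh b_i\cosh w_i\pm\sinh a_i\sinh b_i,
\]
where $w_i=d(p_1,p_2)$ and the sign is $+$ exactly when $q_1,q_2$ lie on opposite sides of the extended perpendicular through $\overline{p_1p_2}$. Since the definition of $\tau_\alpha$ places both $q_1$ and $q_2$ on the attracting sides of $\alpha_1,\alpha_2$, unpacking the deck action on $\h$ shows that these attracting fixed points lie on opposite sides of the extended perpendicular, giving the $+$ sign. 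With $\cosh w_i\geq 1$ this yields $\cosh d(q_1,q_2)\geq\cosh a_i\cosh b_i+\sinh a_i\sinh b_i=\cosh(a_i+b_i)$, so $\ell_X(\beta_i)\geq a_i+b_i$ and hence $\tau_\alpha(\beta_i)\leq\ell_X(\beta_i)/\ell_X(\alpha)+2$.

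Summing over $i$ and using $\sum_i\ell_X(\beta_i)=\ell_X(\beta)$ gives $\tau_\alpha(\beta)\leq\ell_X(\beta)/\ell_X(\alpha)+2\,i(\alpha,\beta)$. Finally, since $i(\alpha,\beta)\geq 1$, the geodesic $\alpha$ must cross the standard embedded collar of $\beta$ of half-width $\Col_X(\beta)$, so $\ell_X(\alpha)\geq 2\Col_X(\beta)$ (an identity of the kind collected in Section~\ref{sec:Appendix}). Substituting gives
\[
\tau_\alpha(\beta)\leq\frac{\ell_X(\beta)}{2\Col_X(\beta)}+2\,i(\alpha,\beta)\leq 2\frac{\ell_X(\beta)}{\Col_X(\beta)}+4\,i(\alpha,\beta),
\]
as required. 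The main obstacle I anticipate is the bookkeeping for the sign in the quadrilateral identity: one has to trace through the deck-transformation setup carefully to verify that the attracting directions really do lie on opposite sides of $\overline{p_1p_2}$, so that $\cosh d(q_1,q_2)\geq\cosh(a_i+b_i)$ rather than the much weaker $\cosh(|a_i-b_i|)$. The remaining steps are then a direct assembly.
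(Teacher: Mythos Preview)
Your overall architecture matches the paper's: cut $\beta$ into its $k=i(\alpha,\beta)$ arcs, bound each $\tau_\alpha(\beta_i)$ in terms of $\ell_X(\beta_i)/\ell_X(\alpha)$, sum, and finish with the collar inequality $\ell_X(\alpha)\geq \Col_X(\beta)$. The divergence, and the gap, is in the per-arc step.

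The sign claim is where the argument breaks. You assert that the attracting directions chosen in the definition of $\tau_\alpha$ force $q_1$ and $q_2$ to lie on opposite sides of the common perpendicular $\overline{p_1p_2}$, giving the $+$ sign and hence $\ell_X(\beta_i)\geq a_i+b_i$. But the definition only orients each $\alpha_j$ \emph{after} seeing where $q_j$ already sits; it says nothing about the relative sides of $q_1$ and $q_2$. Concretely, replacing $\beta$ by a Dehn twist $T_\alpha^{n}(\beta)$ slides one endpoint of the lifted arc along its lift of $\alpha$ by $n\,\ell_X(\alpha)$ while the other endpoint stays put, so for suitable $n$ the endpoints land on the same side of $\overline{p_1p_2}$. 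In that regime your identity gives only
\[
\cosh d(q_1,q_2)=\cosh a_i\cosh b_i\cosh w_i-\sinh a_i\sinh b_i\ \geq\ \cosh(a_i-b_i),
\]
and for $a_i=b_i$ large with $w_i$ moderate one gets $d(q_1,q_2)\approx 2a_i+2\log\sinh(w_i/2)$, strictly smaller than $a_i+b_i$ once $w_i<2\sinh^{-1}(1)$. So the inequality $\ell_X(\beta_i)\geq a_i+b_i$ genuinely fails, and with it your bound $\tau_\alpha(\beta_i)\leq \ell_X(\beta_i)/\ell_X(\alpha)+2$. You flagged exactly this as the obstacle; it is not just bookkeeping but an actual obstruction.

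The paper sidesteps the sign issue entirely. Instead of a quadrilateral formula it observes that the translates $\alpha_1^{j}\cdot\tilde\beta_i^{\perp}$ are pairwise disjoint geodesics (because $\alpha$ is simple), each separating $q_1$ from $\tilde\alpha_2$ once $q_1$ lies beyond $\alpha_1^{j}(p_1)$. Hence $\tilde\beta_i$ must cross each of them, and consecutive crossings are at least $\ell_X(\alpha)$ apart. This yields $\ell_X(\beta_i)\geq (n-2)\ell_X(\alpha)$ from the $\tilde\alpha_1$ side and symmetrically from the $\tilde\alpha_2$ side, giving $(\tau_\alpha(\beta_i)-4)\ell_X(\alpha)\leq 2\ell_X(\beta_i)$ regardless of which sides $q_1,q_2$ occupy. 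Summing and applying $\ell_X(\alpha)>\Col_X(\beta)$ finishes as you do. If you want to salvage your route, you would need to replace the quadrilateral estimate by this separation/crossing count, which is exactly the paper's mechanism.
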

\begin{proof}
We have that $\alpha$ cuts $\beta$ into $k = i(\alpha, \beta)$ strands $\beta_1, \dots, \beta_k$. Let $\beta_i$ be one such strand. In fact, the number of twists of $\beta_i$ about $\alpha$ is bounded. As we show below, each time $\beta_i$ winds around $\alpha$, it gains roughly $\ell_{X}(\alpha)$ of length. As the length of $\beta$ is bounded above, and the length of $\alpha$ is bounded below by $\Col_X(\beta)$, this means $\beta_i$ has bounded twisting number about $\alpha$. Summing up the twisting of each strand, we get our bound.
\begin{figure}[h!]
 \centering 
 \includegraphics{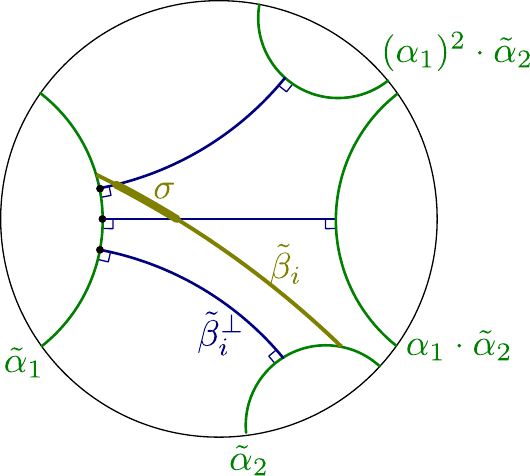}
 \caption{}
 \label{fig:BoundTwisting_of_Marking}
\end{figure}

To see that $\tau_{\alpha}(\beta_i)$ is bounded for each strand $\beta_i$ of $\beta$, take a lift $\tilde \beta_i$ of $\beta_i$ to the universal cover (Figure \ref{fig:BoundTwisting_of_Marking}). Take lifts $\tilde \alpha_1$ and $\tilde \alpha_2$ of $\tilde \alpha$ that pass through either endpoint of $\tilde \beta_i$. Let $\tilde \beta_i^\perp$ be the mutual orthogonal joining $\tilde \alpha_1$ and $\tilde \alpha_2$. Suppose it has endpoint $p$ on $\tilde \alpha_1$. 

Let $\alpha_1$ be the hyperbolic isometry corresponding to $\alpha$, with axis $\tilde \alpha_1$. Then $\tilde \alpha_2$ is disjoint from $\alpha_1 \cdot \tilde \alpha_2$ because they are both lifts of $\alpha$, which is a simple closed curve. Moreover, $\tilde \beta_i^\perp$ is a simple arc, because $\beta_i$ is simple. Thus, $\tilde \beta_i^\perp$ is disjoint from $\alpha_1 \cdot \tilde \beta_i^\perp$. 

Let $\alpha^+$ be the attracting fixed point of $\alpha_1$. Suppose one endpoint of $\tilde \beta_i$ is on the ray out of $\alpha_1\cdot p$ in the direction of $\alpha^+$. As the other endpoint of $\tilde \beta_i$ is on $\tilde \alpha_2$, $\tilde \beta_i$ must intersect $\alpha_1 \cdot \beta_i^\perp$. In fact, let $q$ be the endpoint of $\tilde \beta_i$ on $\tilde \alpha_1$. Suppose
\[
q \in ((\alpha_1)^{n-1} \cdot p, (\alpha_1)^n \cdot p]
\]
Then $\tilde \beta_i$ intersects $(\alpha_1)^i \cdot \tilde \beta_i^\perp$ for all $0< i < n-1$.

Now, for any $i$ with $1 < i < n-1$, let $\sigma$ be the segment of $\tilde \beta_i$ between $(\alpha_1)^{i-1} \cdot \tilde \beta_i^\perp$ and $(\alpha_1)^i \cdot \tilde \beta_i^\perp$. Both of these translates of $\tilde \beta_i^\perp$ are orthogonal to $\tilde \alpha_1$. Moreover, the distance between them is exactly $\ell_{X}(\alpha)$. Thus, the length of $\sigma$ is at least $\ell_{X}(\alpha)$. Therefore, 
\[
 \ell_{X}(\beta_i) \geq (n-2)\ell_{X}(\alpha)
\]
As the same is true for translates of $\tilde \beta_i$ along $\alpha_2$, we have that 
\[
 (\tau_{\alpha}(\beta_i)-4) \ell_{X}(\alpha) \leq 2 \ell_{X}(\beta_i)
\]
Summing over all segments $\beta_1, \dots, \beta_n$ and rearranging, we get 
\[
 \tau_{\alpha}(\beta) \leq 4i(\alpha, \beta)+ \frac{2}{\ell_{X}(\alpha)} \ell_{X}(\beta)
\]
By the collar lemma, we have that $\ell_X(\alpha) > \Col_X(\beta)$, where $\Col_X(\beta)$ is the width of the collar about $\beta$ coming from the collar lemma. (Of course, we can do better than this, but we won't need it for our application.) Thus, we get 
\[
  \tau_{\alpha}(\beta) \leq 4i(\alpha, \beta)+ 2\frac{\ell_{X}(\beta)}{\Col_X(\beta)} 
\]
as desired.
\end{proof}

\subsection{Full mixed collar lemma}
We are now ready to state the mixed collar lemma for pairs of simple closed curves.
\begin{theorem}
\label{thm:Full_Collar}
 Let $\mu$ be a filling current with length minimizer $\pi(\mu)$. Let $\alpha, \beta$ be simple closed curves with $i(\alpha, \beta) \geq 1$. Then there is a constant $D = D(\ell_{\pi(\mu)}(\beta))$ so that 
\[
i(\mu, \alpha) \geq \frac{D}{ i(\alpha, \beta)} i(\mu, \beta)
\]
where $D(x)$ is a continuous function depending only on $\chi(S)$, that is bounded above, and goes to 0 as $x$ goes to infinity.
\end{theorem}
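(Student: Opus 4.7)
Plan: The strategy is to deduce Theorem~\ref{thm:Full_Collar} from Proposition~\ref{prop:Mixed_Collar_Orthogonal_Arcs} by extracting from $\beta$ an orthogonal arc system for $\alpha$ in the sense of Definition~\ref{def:Orthogonal_arc_system}. Cut $\beta$ along $\alpha$ into $k:=i(\alpha,\beta)$ geodesic strands $\beta_1,\dots,\beta_k$. A plain averaging gives a strand with $i(\mu,\beta_j)\geq i(\mu,\beta)/k$; to keep the final constants depending only on $\ell_{\pi(\mu)}(\beta)$ and $\chi(S)$, one instead uses a weighted averaging over the strands that simultaneously bounds the twisting $\tau_\alpha(\beta_j)$, invoking Lemma~\ref{lem:Bound_Twisting_by_Length} to control the total twisting of $\beta$ about $\alpha$.

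Any strand $\beta_j$ lifts to a geodesic arc $\tilde\beta_j\subset\h$ whose interior lies in a convex tile of $\h$ minus the full lift of $\alpha$; that tile is bounded, in particular, by the two lifts $\tilde\alpha_0,\tilde\alpha_1$ carrying the endpoints of $\tilde\beta_j$. By convexity, the common perpendicular $\tilde\beta_j^\perp$ of $\tilde\alpha_0$ and $\tilde\alpha_1$ also lies in this tile, so its projection $\beta_j^\perp$ to $X$ is a geodesic arc orthogonal to $\alpha$ at both endpoints, disjoint from $\alpha$ in its interior, and with $\ell_{\pi(\mu)}(\beta_j^\perp)\leq \ell_{\pi(\mu)}(\beta)$. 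Depending on whether the endpoints of $\beta_j^\perp$ lie on opposite sides or the same side of $\alpha$, one assembles a one-arc or a two-arc orthogonal arc system $\{\beta^+,\beta^-\}$, using (in the same-side case) the analogous straightening of a second, oppositely-placed strand and applying small perturbations to ensure disjoint interiors.

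The crucial quantitative step is to compare $i(\mu,\beta^+)+i(\mu,\beta^-)$ to $i(\mu,\beta_j)$. The arcs $\tilde\beta_j$ and $\tilde\beta_j^\perp$, together with the subarcs of $\tilde\alpha_0$ and $\tilde\alpha_1$ joining their endpoints, bound a Lambert-type quadrilateral in the tile, so that any complete geodesic in the symmetric difference $I(\tilde\beta_j)\triangle I(\tilde\beta_j^\perp)$ must cross one of the subarcs along $\tilde\alpha_i$. These subarcs have total length bounded by $\tau_\alpha(\beta_j)\,\ell_{\pi(\mu)}(\alpha)$, so that translating into intersection numbers gives
\[
|i(\mu,\beta_j)-i(\mu,\beta_j^\perp)|\leq C\,\tau_\alpha(\beta_j)\,i(\mu,\alpha)
\]
for a universal constant $C$, and analogously for the second strand in the two-arc case.

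Finally, applying Proposition~\ref{prop:Mixed_Collar_Orthogonal_Arcs} to $\{\beta^+,\beta^-\}$ (using $\ell_{\pi(\mu)}(\beta^+)+\ell_{\pi(\mu)}(\beta^-)\leq 2\ell_{\pi(\mu)}(\beta)$) gives
\[
i(\mu,\alpha)\geq \delta\bigl(2\ell_{\pi(\mu)}(\beta)\bigr)\bigl(i(\mu,\beta^+)+i(\mu,\beta^-)\bigr),
\]
and combining with the previous inequality and absorbing the $i(\mu,\alpha)$ term on the right-hand side into the left produces the theorem with $D(x)$ depending only on $\ell_{\pi(\mu)}(\beta)$ and $\chi(S)$, monotone and vanishing at infinity (because $\delta$ is). The hard part will be the interplay between the averaging step, the straightening, and the twisting bound: one must ensure that the final constant has no residual $i(\alpha,\beta)$ dependence, so that this dependence appears only through the factor $1/i(\alpha,\beta)$ on the right-hand side of the theorem, exactly as in the statement.
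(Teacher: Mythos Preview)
Your proposal is correct in its overall architecture and relies on exactly the same ingredients as the paper: straighten the strands $\beta_j$ of $\beta$ cut along $\alpha$ to orthogonal arcs $\beta_j^\perp$, compare $i(\mu,\beta_j)$ to $i(\mu,\beta_j^\perp)$ via the twisting number, apply Proposition~\ref{prop:Mixed_Collar_Orthogonal_Arcs}, and use Lemma~\ref{lem:Bound_Twisting_by_Length} to make the resulting constant depend only on $\ell_{\pi(\mu)}(\beta)$ and $\chi(S)$.

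The organization, however, differs from the paper's. You propose to pick out a \emph{single} strand $\beta_j$ by a ``weighted averaging'' that simultaneously makes $i(\mu,\beta_j)$ large and $\tau_\alpha(\beta_j)$ small, and then apply Proposition~\ref{prop:Mixed_Collar_Orthogonal_Arcs} once. This can indeed be made to work (for instance, by pigeonhole there is a $j$ with $i(\mu,\beta_j)/(1+\tau_\alpha(\beta_j)) \geq i(\mu,\beta)/(k+\tau_\alpha(\beta))$, and then Lemma~\ref{lem:Bound_Twisting_by_Length} bounds the denominator by $k$ times a function of $\ell_{\pi(\mu)}(\beta)$), but you do not spell this out, and the phrase ``weighted averaging'' by itself is too vague at the crux of the argument. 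The paper instead \emph{sums over all $k$ strands}: it proves $i(\mu,\beta)\leq \tau_\alpha(\beta)\,i(\mu,\alpha)+\sum_j i(\mu,\beta_j^\perp)$, groups the $\beta_j^\perp$ into at most $k$ orthogonal arc systems, and applies Proposition~\ref{prop:Mixed_Collar_Orthogonal_Arcs} to each. Summing gives $k\,i(\mu,\alpha)\geq \delta(\ell_{\pi(\mu)}(\beta))\bigl(i(\mu,\beta)-\tau_\alpha(\beta)\,i(\mu,\alpha)\bigr)$, and then Lemma~\ref{lem:Bound_Twisting_by_Length} absorbs $\tau_\alpha(\beta)$ into a constant times $k$. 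This summing approach is cleaner: the factor $1/i(\alpha,\beta)$ falls out automatically from having $k$ copies of $i(\mu,\alpha)$ on the left, with no delicate strand selection needed. Your approach buys nothing extra and costs the averaging step; the paper's buys simplicity.
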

\begin{proof}
 First, we show this result for closed geodesics. So let $\gamma$ be a filling closed geodesic with length minimizer ${\pi(\gamma)}$. Suppose $i(\alpha, \beta) = k$. Then $\alpha$ cuts $\beta$ into arcs $\beta_1, \dots, \beta_k$.

Let $\beta^\perp_1$ be the arc whose endpoints are orthogonal to $\alpha$, that is freely homotopic to $\beta_1$ via a homotopy that keeps its endpoints on $\alpha$. We will first show that 
\[
 i(\gamma,\beta_1) \leq i(\gamma,\beta^\perp_1) + \tau_\alpha(\beta) i(\gamma, \alpha)
\]

See Figure \ref{fig:Intersection_and_twisting} for what follows. Just as in the definition of twisting number, take a lift $\tilde \beta^\perp_1$ of $\beta_1$ whose endpoints $p_1, p_2$ lie on lifts $\tilde \alpha_1$ and $\tilde \alpha_2$ of $\alpha$, respectively. Then take a lift $\tilde \beta_1$ of $\beta_1$ whose endpoints $q_1, q_2$ also lie on $\tilde \alpha_1$ and $\tilde \alpha_2$, respectively. Let $\tilde \gamma$ be the full pre-image of $\gamma$ in the universal cover. Then, 
\[
 i(\gamma, \beta_1) = \# \tilde \gamma \cap \tilde \beta_1 \text{ and }  i(\gamma, \beta^\perp_1) = \# \tilde \gamma \cap \tilde \beta^\perp_1
\]
Moreover, if $\alpha_1, \alpha_2 \in \pi_1(S)$ are the deck transformations acting by translation along $\tilde \alpha_1, \tilde \alpha_2$, respectively, then for each $n \in \Z$,
\[
 \#\tilde \gamma \cap [\alpha_i^n p_i, \alpha_i^{n+1} p_i] = i(\gamma, \alpha)
\]

Suppose 
\[
 q_1 \in (\alpha_1^{m-1} \cdot p_1, \alpha_1^m \cdot p_1] \text{ and } q_2 \in (\alpha_2^{n-1} \cdot p_2, \alpha_2^n \cdot p_2]
\]
If a geodesic in $\tilde \gamma$ intersects $\tilde \beta_1$, then it intersects either $\tilde \beta^\perp_1$, $[p_1, \alpha_1^m \cdot p_1]$, or $[p_2, \alpha_1^n \cdot p_2]$. In particular, 
\[
 \# \tilde \gamma \cap \tilde \beta_1 \leq \# \tilde \gamma \cap [p_1, \alpha_1^m \cdot p_1] + \# \tilde \gamma \cap \tilde \beta^\perp_1 + \tilde \gamma \cap [p_2, \alpha_1^n \cdot p_2]
\]
That is, 
\begin{align*}
 i(\gamma, \beta_1) & \leq |m| i(\gamma, \alpha) + i(\tilde \beta^\perp_1, \gamma) + |n| i(\gamma, \alpha)\\
  & = \tau_\alpha(\beta_1)i(\gamma, \alpha) + i(\beta^\perp_1, \gamma)
\end{align*}

Let $\beta^\perp_2, \dots, \beta^\perp_k$ be the arcs that are orthogonal to $\alpha$ at their endpoints, and homotopic relative to $\alpha$ to $\beta_2, \dots, \beta_k$, respectively. As the same inequality holds for $\beta_2, \dots, \beta_k$, sum over all the arcs we get 
\begin{equation}
\label{eq:i(gamma,beta)}
 i(\gamma, \beta) \leq \tau_\alpha(\beta)i(\gamma, \alpha) + \sum_{i=1}^k i(\beta^\perp_i, \gamma)
\end{equation}
as the twisting number of $\beta$ is the sum of the twisting numbers of its arcs.

We will break the set $\beta^\perp_1, \dots, \beta^\perp_k$ up into orthogonal arc systems for $\alpha$. First, suppose $\beta^\perp_1, \dots, \beta^\perp_l$ each have an endpoint on either side of $\alpha$. Then they form orthogonal arc systems on their own. Now, for $i > l+1$, suppose $\beta_i$ has both endpoints on the same side of $\alpha$. Then note that the set of arcs with endpoints on one side of $\alpha$ are in one-to-one correspondence with the set of arcs that have endpoints on the other side of $\alpha$. This follows from the fact that, for each intersection point of $\beta$ and $\alpha$, there is one arc emerging to the left, and one arc emerging to the right. So suppose
\[
 \{\beta^\perp_1\}, \dots, \{\beta^\perp_l\}, \{\beta^\perp_{l+1}, \beta^\perp_{l + 2}\}, \dots, \{\beta^\perp_{k-1}, \beta^\perp_n\}
\]
is our collection of orthogonal arc systems of arcs for $\alpha$.

Now Proposition \ref{prop:Mixed_Collar_Orthogonal_Arcs} implies that for $i \leq l$,
\[
i(\gamma, \alpha) > \delta(\ell_{\pi(\gamma)}(\beta^\perp_i)) i(\gamma, \beta^\perp_i)
 \]
 and for $i = l+1, l+3, \dots, n-1$, 
 \[
  i(\gamma, \alpha) > \delta(\ell_{\pi(\gamma)}(\beta^\perp_i)) \big(i(\gamma, \beta^\perp_i) + i(\gamma, \beta_{i+1}) \big)
 \]
 where $\delta = \delta(\ell_{\pi(\gamma)}(\beta^\perp_i))$ depends only on $\ell_{\pi(\gamma)}(\beta^\perp_i)$, $\ell_{\pi(\gamma)}(\beta^\perp_{i+1})$ (if $i > l$) and the Euler characteristic $\chi(S)$. Now, $\delta$ is a decreasing function, so $\delta(\ell_{\pi(\gamma)}(\beta^\perp_i)) > \delta(\ell_{\pi(\gamma)}(\beta))$ for all $i$. Using this fact, and summing over all arcs $\beta^\perp_1, \dots, \beta^\perp_k$, we get
 \begin{align*}
  i(\gamma, \alpha)i(\alpha, \beta) & \geq \delta(\ell_{\pi(\gamma)}(\beta)) \sum_{i=1}^k  i(\gamma, \beta^\perp_1) \\
  & \geq \delta(\ell_{\pi(\gamma)}(\beta))(i(\gamma, \beta) - \tau_\alpha(\beta)i(\gamma, \alpha)) 
 \end{align*}
 where the second inequality is by Equation (\ref{eq:i(gamma,beta)}).
 
Rearranging, this gives $i(\gamma, \alpha) \geq \frac{\delta(\ell_{\pi(\gamma)}(\beta))}{i(\alpha, \beta) + \delta(\ell_{\pi(\gamma)}(\beta))\tau_\alpha(\beta)}i(\gamma, \beta)$. For what follows, we will divide through by $\delta = \delta(\ell_{\pi(\gamma)}(\beta))$ to get
\begin{equation}
\label{eq:Full_Collar_for_closed_curves}
 i(\gamma, \alpha) \geq \frac{1}{\frac{1}{\delta} i(\alpha, \beta)+ \tau_\alpha(\beta)}i(\gamma, \beta)
\end{equation}
By Lemma \ref{lem:Bound_Twisting_by_Length}, we have $\tau_\alpha(\beta) \leq 2 \frac{\ell_{\pi(\gamma)}(\beta)}{\Col_{\pi(\gamma)}(\beta)} + 4 i(\alpha, \beta)$. Thus, 
\[
 \frac{1}{\delta} i(\alpha, \beta)+ \tau_\alpha(\beta) \leq (\frac{1}{\delta}+4) i(\alpha, \beta)+2 \frac{\ell_{\pi(\gamma)}(\beta)}{\Col_{\pi(\gamma)}(\beta)}
\]
As $i(\alpha, \beta) \geq 1$, and the other terms on the right-hand side depend only on $\ell_{\pi(\gamma)}(\beta)$, we set
\[
D = D(\ell_{\pi(\gamma)}(\beta)) = \frac{1}{\frac{1}{\delta(\ell_{\pi(\gamma)}(\beta))}+4 + 2 \frac{\ell_{\pi(\gamma)}(\beta)}{\Col_{\pi(\gamma)}(\beta)}}
\]
Then we have
\[
 i(\gamma, \alpha) \geq \frac{D(\ell_{\pi(\gamma)}(\beta))}{ i(\alpha, \beta)} i(\gamma, \beta)
\]
where $D(x)$ is a continuous function that goes to 0 as $x$ goes to infinity. Moreover, it is bounded above by $\frac 14$.

Now take any filling current $\mu$ with length minimizer ${\pi(\mu)}$. Let $\gamma_n$ be a sequence of closed curves so that $c_n \gamma_n \to \mu$. If $\pi(\gamma_n)$ is the length minimizer of $\gamma_n$, then $\pi(\gamma_n) \to \pi(\mu)$. Thus, $\ell_{\pi(\gamma_n)}(\beta)$ converges to $\ell_{\pi(\mu)}(\beta)$, and since $D$ is continuous, we have $D(\ell_{\pi(\gamma_n)}(\beta))$ converges to $D(\ell_{\pi(\mu)}(\beta))$. Thus, by bilinearity of intersection number,
\[
 \lim_{n \to \infty}(c_n \gamma_n, \alpha) \geq \lim_{n \to \infty} \frac{D(\ell_{\pi(\gamma_n)}(\beta))}{ i(\alpha, \beta)} i(C_n\gamma_n, \beta)
\]
and so 
\[
 i(\mu, \alpha) \geq \frac{D(\ell_{\pi(\mu)}(\beta))}{ i(\alpha, \beta)} i(\mu, \beta)
\]
where $D = D(x)$ is a continuous function depending only on $\chi(S)$, as desired.
\end{proof}

\section{Short markings in the thick part}
\label{sec:Mixed_length_comparisons}
If $Y$ is a thick component of some hyperbolic surface $X$, then it has a marking where all the curves are uniformly short. That is, all the curves in the marking are comparable in length to the $Y$-systole of $X$. 

As a consequence of the mixed collar theorem in the previous section, we show that if $X = \pi(\mu)$ for some filling current $\mu$, then $Y$ also has a marking that is uniformly short with respect to $\ell_\mu$. In fact, we can find a marking on $Y$ where the $\mu$-length of each curve is comparable to the $Y$-systolic length of $\mu$. 

We need to show the following lemma, which states that any simple closed curve has an orthogonal arc system of bounded length.
\begin{lem}
  \label{lem:Shortest_arc_length_bound}
Let $\alpha$ be a simple closed geodesic on a closed hyperbolic surface $X$. Then $\alpha$ has an orthogonal arc system $\beta$ with 
\[
 \ell_X(\beta) \prec \Col_X(\alpha) + 1
\]
where the constant depends only on the topology of $X$.
 \end{lem}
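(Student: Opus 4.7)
The plan is to build $\beta^+$ and $\beta^-$ separately on opposite sides of $\alpha$ via a one-sided area argument, and then compare the resulting length bound directly to $\Col_X(\alpha)$. Fix an orientation of $\alpha$ and consider the one-sided normal exponential map $\phi^+\colon \alpha \times [0,\infty) \to X$ sending $(x,w)$ to the time-$w$ point on the geodesic leaving $x$ perpendicular to $\alpha$ on the positive side. Let $W^+$ be the supremum of widths $w$ for which $\phi^+|_{\alpha \times [0,w]}$ is injective. Its image is an embedded one-sided collar of area $\ell_X(\alpha)\sinh(w)$, so since $X$ has total area $2\pi|\chi(S)|$,
\[
W^+ \leq \sinh^{-1}\left(\frac{2\pi|\chi(S)|}{\ell_X(\alpha)}\right),
\]
and the analogous bound holds for $W^-$ on the negative side.

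At $w = W^+$ injectivity just fails, producing distinct $x_1, x_2 \in \alpha$ and perpendicular geodesic rays $\gamma_1, \gamma_2$ from the $x_i$ on the positive side arriving at a common point $p$ at distance $W^+$ from $\alpha$. Injectivity for smaller widths forces $\gamma_1 \cap \gamma_2 = \{p\}$, so the concatenation $\gamma_1 \cup \gamma_2^{-1}$ is a simple piecewise-geodesic arc of length $2W^+$, perpendicular to $\alpha$ at both endpoints, with interior on the positive side. I then let $\beta^+$ be a length-minimizer over simple arcs freely homotopic rel $\alpha$ to this piecewise-geodesic arc. Standard arguments (existence of minimizers in a relative homotopy class and first-variation analysis) imply the minimizer is a smooth simple geodesic arc perpendicular to $\alpha$ at both endpoints, of length at most $2W^+$, whose interior remains on the positive side. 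Repeating on the other side yields $\beta^-$, and since $\beta^+$ and $\beta^-$ have interiors in disjoint one-sided neighborhoods of $\alpha$, together they form an orthogonal arc system.

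For the length comparison, recall $\Col_X(\alpha) = \sinh^{-1}(1/\sinh \ell_X(\alpha))$. If $\ell_X(\alpha) \leq 1$, then $\Col_X(\alpha) \asymp \log(1/\ell_X(\alpha))$ while
\[
W^{\pm} \leq \sinh^{-1}(2\pi|\chi(S)|/\ell_X(\alpha)) \leq \log(1/\ell_X(\alpha)) + c(\chi(S)),
\]
so $W^{\pm} \prec \Col_X(\alpha) + 1$. If $\ell_X(\alpha) \geq 1$, then $W^{\pm} \leq \sinh^{-1}(2\pi|\chi(S)|)$, a topological constant, and $W^\pm \prec \Col_X(\alpha) + 1$ is trivial. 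In either case, $\ell_X(\beta) = 2(W^+ + W^-) \prec \Col_X(\alpha) + 1$ with constants depending only on $\chi(S)$.

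The main subtlety lies in verifying that the length-minimizing arc is genuinely simple and stays on the correct side of $\alpha$. Simplicity is a universal cover argument: distinct lifts of the simple piecewise-geodesic arc have unlinked endpoints on the relevant lifts of $\alpha$, and this unlinking is preserved when each arc is replaced by its geodesic representative, which has the same endpoints. Remaining on the correct side is inherited from the relative homotopy class. The case in which $\alpha$ separates $X$ requires only a minor bookkeeping adjustment: apply the one-sided area argument inside each component of $X \setminus \alpha$, each of which has area at most $2\pi|\chi(S)|$.
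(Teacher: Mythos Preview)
Your area-based approach is quite different from the paper's (which uses the Bers shortest pants decomposition and, when $\alpha$ is itself a pants curve, an explicit pentagon identity in an adjacent pair of pants), and the main estimate $W^\pm \prec \Col_X(\alpha)+1$ is correct. However, there is a genuine gap in the step where you conclude that $\{\beta^+,\beta^-\}$ is an orthogonal arc system.

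The assertion that $\beta^+$ and $\beta^-$ ``have interiors in disjoint one-sided neighborhoods of $\alpha$'' is not justified when $\alpha$ is non-separating: the images $\phi^+(\alpha\times(0,W^+))$ and $\phi^-(\alpha\times(-W^-,0))$ can overlap in $X$. For instance, if there is a simple geodesic arc $\delta$ perpendicular to $\alpha$ at both ends, with endpoints on opposite sides and $\max(W^+,W^-)\le \ell_X(\delta)<W^++W^-$, then interior points of $\delta$ lie in both one-sided collars. Even after tightening, each $\beta^\pm$ only lies in the $W^\pm$-neighborhood of $\alpha^\pm$ in $X_\alpha$ (every point on it is within half its length of an endpoint), and those two neighborhoods may meet; so $\beta^+$ and $\beta^-$ can intersect, violating Definition~\ref{def:Orthogonal_arc_system}. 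A related omission is that the first failure of injectivity of $\phi^+$ need not be two positive-side rays meeting at a common point at distance $W^+$: it can instead be a single perpendicular ray returning to $\alpha$ from the negative side. Both issues are handled by one extra case split: if the $W^+$-neighborhood of $\alpha^+$ and the $W^-$-neighborhood of $\alpha^-$ meet in $X_\alpha$ (in particular if a perpendicular ray from one side reaches the other within time $W^\pm$), take a point in the intersection to build an arc from $\alpha^+$ to $\alpha^-$ of length at most $W^++W^-$ and tighten it to a single orthogonal arc with endpoints on opposite sides of $\alpha$; otherwise the two neighborhoods are genuinely disjoint and your pair $\{\beta^+,\beta^-\}$ is legitimate. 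With that case split added, your argument goes through.
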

 
\begin{proof}
Cut $X$ along $\alpha$ to get a hyperbolic surface with boundary $X_\alpha$. Then $X_\alpha$ has two boundary components: $\alpha^+$ and $\alpha^-$.  Finding an orthogonal arc system for $\alpha$ on $X$ is equivalent to finding either an arc $\beta$ joining $\alpha^+$ to $\alpha^-$, or two arcs, $\beta^+$ and $\beta^-$ where $\beta^+$ has both endpoints on $\alpha^+$, and $\beta^-$ has both endpoints on $\alpha^-$. Of course, we can homotope these arcs relative the boundary of $X_\alpha$ to be orthogonal to the boundary. Our goal is to find such arcs on $X_\alpha$ that are of bounded length.
 
Let $\p$ be the Bers shortest pants decomposition of $X$. When we cut $X$ to get $X_\alpha$, the curves in $P$ get cut into a collection of curves and arcs on $X_\alpha$. Abusing notation, we will still let $\p$ denote this collection of curves and arcs on $X_\alpha$. Thus, for each curve or arc $\gamma \in \p$ we have $\ell_{X_\alpha}(\gamma) \prec 1$, for constants depending only on $\chi(S)$. There are two cases.

Suppose there is an arc $\gamma \in \p$ on $X_\alpha$ that joins $\alpha^+$ to $\alpha^-$. We tighten $\gamma$ to an arc $\beta$ that is orthogonal at its endpoints to $\partial X_\alpha$. Then its length can only decrease. Thus, we have found an orthogonal arc system $\beta$ for $\alpha$ with 
\[
\ell_X(\beta) \prec 1
\]

So suppose there is no $\gamma \in \p$ that joins $\alpha^+$ to $\alpha^-$. Then again there are two cases: either $\alpha$ was part of the Bers pants decomposition of $X$, or it was not. 

Suppose first that $\alpha$ is not part of the Bers pants decomposition of $X$, as this is easier. In this case, there must be arcs $\gamma^+$ and $\gamma^-$ in $\p$ so that $\beta^+$ has both endpoints on $\alpha^+$, and $\beta^-$ has both endpoints on $\alpha^-$. To see this, note that if $\alpha$ is not part of the pants decomposition of $X$, then some curve $\gamma$ must intersect $\alpha$. Cutting $\gamma$ along $\alpha$ gives a collection of arcs on $X_\alpha$. But we assume that no arc joints $\alpha^+$ to $\alpha^-$. So all arcs must either join $\alpha^+$ to itself, or $\alpha^-$ to itself. Since the number of endpoints on $\alpha^+$ must equal the number of endpoints on $\alpha^-$, we find the arcs $\gamma^+$ and $\gamma^-$ we are looking for.

Again, since $\gamma^+$ and $\gamma^-$ are in $\p$, we have $\ell_{X_\alpha}(\gamma^+), \ell_{X_\alpha}(\gamma^+) \prec 1$. Tightening these to arcs $\beta^+$ and $\beta^-$ that are orthogonal to $\alpha^+$ and $\alpha^-$ at their endpoints, respectively, gives us the orthogonal arc system $\beta = \{\beta^+, \beta^-\}$ where 
\[
 \ell_X(\beta^+),\ell_X(\beta^-) \prec 1
\]

Lastly, suppose $\alpha$ is part of the Bers short pants decomposition of $X$. Then there is a pair of pants $P$ on $X_\alpha$ that has $\alpha^+$ as a boundary component. In this case, we will find an arc $\beta^+$ inside $P$, that is orthogonal to $\alpha^+$ at both of its endpoints, so that $\ell_{X_\alpha}(\beta^+) \prec \Col_X(\alpha) + 1$.

We show this as follows. We can decompose $P$ into two congruent right-angled hexagons. Let $h$ be one such hexagon. Then $\beta^+$ cuts $h$ into two right-angled pentagons (Figure \ref{fig:Pentagon_including_alpha}). Each pentagon has a boundary component that lies on $\alpha^+$. Choose the pentagon $Q$ so that the side of $Q$ that lies on $\alpha^+$ has length  $a \geq \ell(\alpha^+)/4$. 
  \begin{figure}[h!]
   \centering 
   \includegraphics{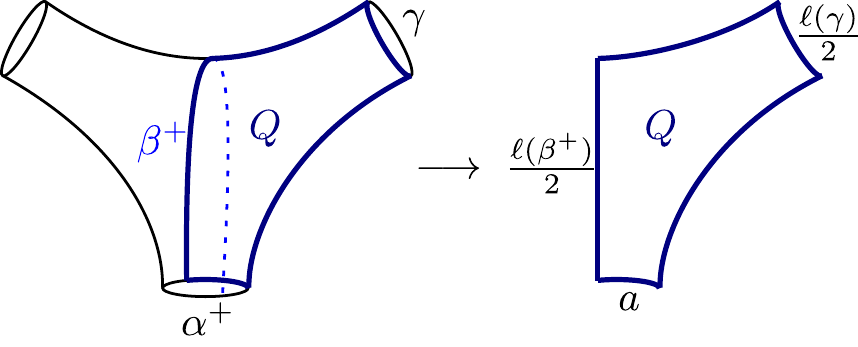}
   \caption{The pentagon $Q$ in the pair of pants $P$. The side lengths are labeled on the right.}
   \label{fig:Pentagon_including_alpha}
  \end{figure}

 So $Q$ has a boundary edge that lies on $\alpha^+$, and another boundary edge that lies on some boundary curve $\gamma$ of $P$. By, for example, \cite[Theorem 2.3.4]{Buser}, 
  \[
   \sinh(\ell(\beta^+)/2) = \frac{\cosh(\ell(\gamma)/2)}{\sinh(a)}
  \]

  Using the identity $\sinh(2a) = 2 \sinh(a)\cosh(a)$, we get 
  \[
   \sinh(\ell(\beta^+)/2) = \frac{2\cosh(a)\cosh(\ell(\gamma)/2)}{\sinh(2a)}
  \]
  Now, $\sinh^{-1}(x) = \log(\sqrt{1+x^2} + x)$, and so $\sinh^{-1}(cx) \leq \log c + \sinh^{-1}(x)$ for $c > 1$. Let $c =2 \cosh(a)\cosh(\ell(\gamma)/2)$ be the numerator above. Since $\cosh(x) > 1$ for all $x$, we have $c >  1$, so
  \begin{align*}
   \ell(\beta^+)& \leq 2\log\left( 2 \cosh(a)\cosh(\ell(\gamma)/2) \right) + 2\sinh^{-1}\left( \frac{1}{\sinh(2a)} \right )\\
   & \leq 2\log\big( 2 \cosh(a)\cosh(\ell(\gamma)/2) \big) + 2 \Col_X(\alpha)
  \end{align*}
  where we use that $a \geq \ell(\alpha^+)/4$ to conclude that $\sinh^{-1}\left (\frac{1}{\sinh(2a)}\right ) \leq \Col_X(\alpha)$.
  
  Since $\alpha$ and $\gamma$ belong to the shortest pants decomposition of $X$, the $\log$ term is bounded above in terms of $\chi(S)$. Thus,
  \[
   \ell(\beta^+) < C + 2 \Col_X(\alpha)
  \]
  where $C$ depends only on $\chi(S)$. Analogously, there is a pair of pants in $X_\alpha$ containing $\alpha^-$, so the above proof gives us an arc $\beta^-$ with both endpoints orthogonal to $\alpha^-$, for which $ \ell(\beta^-) < C + 2 \Col_X(\alpha)$. So we again get the orthogonal arc system for $\alpha$ we are looking for.
\end{proof}

We're now ready to show that a thick component of a length minimizer has a marking that is not very long with respect to $\mu$.
\begin{lem}
\label{lem:mu-short_marking}
 Let $Y$ be a thick component of $\pi(\mu)$, for $\mu \in \C_{fill}(S)$. Then there exists a marking $\M$ of $Y$ so that
 \[
  i(\mu, \gamma_i) \asymp \sys_Y(\mu)
 \]
for all $\gamma_i \in \M$.
\end{lem}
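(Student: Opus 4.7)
The plan is to construct $\M$ greedily based on $\mu$-length and then verify that every curve in $\M$ has $\mu$-length comparable to $\sys_Y(\mu)$, with the upper bound coming from a combination of the mixed collar lemma for orthogonal arc systems (Proposition \ref{prop:Mixed_Collar_Orthogonal_Arcs}), the bounded-length-arcs lemma (Lemma \ref{lem:Shortest_arc_length_bound}), and the bounded geometry of a thick component.

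More precisely, I would let $\gamma_1$ be the $Y$-systolic curve for $\mu$, so that $i(\mu, \gamma_1) = \sys_Y(\mu)$, and inductively let $\gamma_{i+1}$ be a $\mu$-shortest essential, non-peripheral simple closed curve in $Y$ that is disjoint from and not isotopic to $\gamma_1, \dots, \gamma_i$. This produces a pants decomposition $\gamma_1, \dots, \gamma_n$ of $Y$. For the transverse curves, I would let $\gamma_{n+i}$ be a $\mu$-shortest simple closed curve in $Y$ with $i(\gamma_{n+i}, \gamma_i) \in \{1,2\}$ and $i(\gamma_{n+i}, \gamma_j) = 0$ for $j \neq i$. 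The lower bound $i(\mu, \gamma_k) \geq \sys_Y(\mu)$ is automatic, since each $\gamma_k$ is essential and non-peripheral in $Y$.

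The upper bound for the pants curves I would prove by induction on $i$. Assuming $i(\mu, \gamma_j) \prec \sys_Y(\mu)$ for all $j \leq i$ and that $\gamma_1, \dots, \gamma_i$ do not yet form a pants decomposition, the complementary surface contains a component $\hat Y$ that is not a pair of pants. Since $Y$ is thick in $\pi(\mu)$ and the $\gamma_j$ are essential in $Y$, the subsurface $\hat Y$ has all boundary curves of length at least $c_b$ and no interior curves shorter than $c_b$; in particular $\hat Y$ has bounded diameter, and the proof of Lemma \ref{lem:Shortest_arc_length_bound} adapted to $\hat Y$ provides a simple arc $\delta$ inside $\hat Y$ with both endpoints on some boundary component $\gamma_j$, orthogonal to $\gamma_j$ at its endpoints, and with $\ell_{\pi(\mu)}(\delta) \prec 1$. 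Applying Proposition \ref{prop:Mixed_Collar_Orthogonal_Arcs} with $\alpha = \gamma_j$ and an orthogonal arc system containing $\delta$ (uniformly bounded $\ell_{\pi(\mu)}(\delta)$ forces $\delta(\ell_{\pi(\mu)}(\delta))$ to be bounded below) gives $i(\mu, \delta) \prec i(\mu, \gamma_j) \prec \sys_Y(\mu)$. Closing $\delta$ up with a subarc of $\gamma_j$ yields a simple closed curve $\sigma \subset \hat Y$ disjoint from $\gamma_1, \dots, \gamma_i$ with $i(\mu, \sigma) \leq i(\mu, \delta) + i(\mu, \gamma_j) \prec \sys_Y(\mu)$, so $i(\mu, \gamma_{i+1}) \leq i(\mu, \sigma) \prec \sys_Y(\mu)$.

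The transverse curves are handled by the same template: find an orthogonal arc system $\beta$ for $\gamma_i$ inside the subsurface $Y \setminus \bigcup_{j \neq i} \gamma_j$ with $\ell_{\pi(\mu)}(\beta) \prec 1$, apply Proposition \ref{prop:Mixed_Collar_Orthogonal_Arcs} to get $i(\mu, \beta) \prec i(\mu, \gamma_i)\prec \sys_Y(\mu)$, and then close $\beta$ up with arcs of $\gamma_i$ to produce $\gamma_{n+i}$. The main obstacle I anticipate is the purely hyperbolic-geometry step of producing, at each stage of the induction, a bounded-length orthogonal arc system inside a \emph{prescribed} subsurface rather than inside all of $\pi(\mu)$; making Lemma \ref{lem:Shortest_arc_length_bound} work in that relative setting requires carefully using that each complementary piece inherits the thickness (and hence bounded diameter) of $Y$, and that the Bers-type shortest pants decomposition of $\hat Y$ has curves of bounded length in terms of $\chi(S)$ alone.
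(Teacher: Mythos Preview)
Your strategy is the same as the paper's in spirit---start from the $\mu$-systole, use Lemma~\ref{lem:Shortest_arc_length_bound} to produce a short orthogonal arc, apply Proposition~\ref{prop:Mixed_Collar_Orthogonal_Arcs}, and close up---but there is a real gap in the step you flag as the ``main obstacle,'' and your proposed resolution does not work.

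The problem is the sentence ``$\hat Y$ has all boundary curves of length at least $c_b$ and no interior curves shorter than $c_b$; in particular $\hat Y$ has bounded diameter.'' A lower bound on the lengths of all simple closed geodesics does \emph{not} give a diameter bound for a surface with geodesic boundary: a pair of pants with three boundary components of length $L$ has diameter of order $L$. More to the point, your curves $\gamma_1,\dots,\gamma_i$ are chosen to be $\mu$-short, and at this stage of the argument nothing prevents them from being arbitrarily $\pi(\mu)$-long (that is precisely what Theorem~\ref{thm:Main} would give, but we are proving an ingredient of it). So $\hat Y$ can have arbitrarily long geodesic boundary, and neither a bounded-diameter claim nor a Bers-type bound on its shortest pants decomposition is available with constants depending only on $\chi(S)$.

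The paper avoids this by never working inside $\hat Y$. It applies Lemma~\ref{lem:Shortest_arc_length_bound} to the current pants curve $\gamma_{2k}$ on all of $\pi(\mu)$, obtaining an orthogonal arc system $\beta$ with $\ell_{\pi(\mu)}(\beta)\prec \Col_{\pi(\mu)}(\gamma_{2k})+1\prec 1$; here only the \emph{lower} bound $\ell_{\pi(\mu)}(\gamma_{2k})\ge c_b$ (thickness of $Y$) is used. Proposition~\ref{prop:Mixed_Collar_Orthogonal_Arcs} then yields $i(\mu,\beta)\prec i(\mu,\gamma_{2k})\prec\sys_Y(\mu)$. Only \emph{after} this estimate does the paper enforce disjointness from $\gamma_0,\dots,\gamma_{2k-1}$, by a surgery that trades a subarc of $\beta$ for a full copy of some earlier $\gamma_j$; since each earlier curve already has $i(\mu,\gamma_j)\prec\sys_Y(\mu)$ by induction, the $\mu$-length stays controlled. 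No control on the $\pi(\mu)$-length of the modified arc is needed, because Proposition~\ref{prop:Mixed_Collar_Orthogonal_Arcs} has already been applied. Your argument becomes correct if you replace the ``find the arc inside $\hat Y$'' step with this apply-then-surger step.
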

We call $\M$ a $\mu$-short marking of $Y$.
\begin{proof} 
 We can build our marking of $Y$ as follows.  As $\mu$ is filling, there exists an essential, non-peripheral simple closed curve $\gamma_0$ in $Y$ so that 
\[
 i(\gamma_0, \mu) = \sys_Y(\mu)
\]
This follows from the fact that, for each $L$, there are finitely many closed curves $\gamma$ with $i(\mu, \gamma) < L$ (by, for example, \cite{Glorieux}). We say that $\gamma_0$ is a \textbf{$Y$-systole} of $\mu$. 
 
 So let $\gamma_0$ be the $Y$-systole of $\mu$. Suppose we have essential, non-peripheral simple closed curves $\gamma_0, \gamma_1, \dots, \gamma_{2k}$, so that for each $i$, and each $j \neq 2i + 1$, 
\[
i(\gamma_{2i}, \gamma_j) = 0
\]
and $\gamma_{2i}$ and $\gamma_{2i+1}$ intersect minimally. In other words, the set $\{\gamma_{2i}\}_{i = 0,\dots, k}$ will eventually be part of the pants decomposition of $Y$, and $\gamma_{2i+1}$ is the transverse curve to $\gamma_{2i}$. Suppose for each $i$ we have $i(\mu,\gamma_i) \asymp \sys_Y(\mu)$. Then we form curves $\gamma_{2k+1}$ and $\gamma_{2k+2}$ as follows.

By Lemma \ref{lem:Shortest_arc_length_bound}, we can find an orthogonal arc system $\beta = \{\beta^+, \beta^-\}$ for $\gamma_{2k}$, with $\ell_{\pi(\mu)}(\beta) \prec \Col_{\pi(\mu)}(\gamma_{2k}) + 1$. But as $\gamma_{2k}$ lies in a thick component of $\pi(\mu)$, its collar width is also bounded above in terms of $\chi(S)$. Thus, 
 \[
  \ell_{\pi(\mu)}(\beta) \prec 1
 \]
for a constant depending only on $\chi(S)$. As the function $\delta$ from Proposition \ref{prop:Mixed_Collar_Orthogonal_Arcs} is decreasing, this implies that 
\[
 \delta(\ell_{\pi(\mu)}(\beta)) \succ 1
\]
So, by the mixed collar lemma for a curve and its orthogonal arc system (Proposition \ref{prop:Mixed_Collar_Orthogonal_Arcs}), $i(\beta, \mu)  \prec i(\gamma_{2k}, \mu)$ where the constant depends only on $\chi(S)$. Thus, 
\[
i(\beta, \mu) \asymp \sys_Y(\mu)
\]
by our assumptions on $\gamma_{2k}$, and the definition of systolic length.

It is possible that $\beta^+$ or $\beta^-$ intersect one of $\gamma_0, \dots, \gamma_{2k-1}$. In this case, we replace $\beta$ with a new collection $\beta'$ that does not, so that we still have $i(\beta, \mu) \asymp \sys_Y(\mu)$ (Figure \ref{fig:New_orth_arc_system}).

\begin{figure}[h!]
 \centering 
 \includegraphics{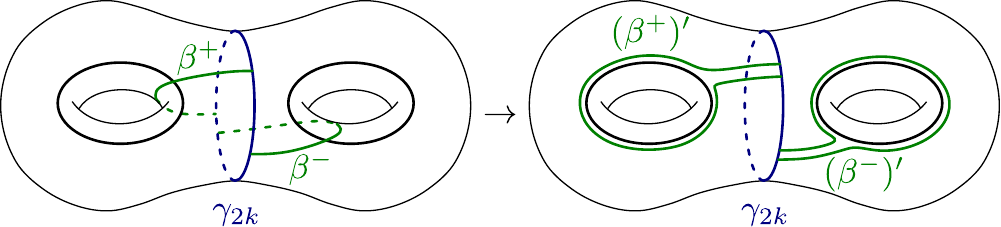}
 \caption{If $\beta^+$ or $\beta^-$ intersect any of $\gamma_0, \dots, \gamma_{2k-1}$, we do surgery on them to remove the intersection.}
 \label{fig:New_orth_arc_system}
\end{figure}

Since $\beta$ is an orthogonal arc system for $\gamma_{2k}$, there are endpoints $p, q$ on $\gamma_{2k}$ so that $\beta^+$ emerges from $\gamma_{2k}$ to the right out of $p$, and one of $\beta^+$ or $\beta^-$ emerge from $\gamma_{2k}$ to the left out of $q$. To form $(\beta^+)'$, we follow $\beta^+$ out of $p$. If we never intersect any of $\gamma_0, \dots, \gamma_{2k-1}$, then $(\beta^+)' = \beta^+$. Suppose we hit $\gamma_i$ for some $i < 2k$ at some point $x$. Then we follow $\gamma_i$ around back to $x$, and follow $\beta^+$ backwards from $x$ to $p$ to make $(\beta^+)'$. By the same argument as in \cite[Lemma 4.4]{MZ}, we have 
\begin{align*}
 i(\mu, (\beta^+)') & \leq i(\mu, \beta^+) + i(\mu, \gamma_i) \\
  & \prec \sys_Y(\mu)
\end{align*}
which is what we wanted. 

If $(\beta^+)'$ has both endpoints on the same side of $\gamma_{2k}$, then we do the same procedure starting at the point $q$ to make $(\beta^-)'$. Note that $(\beta^-)'$ might be made from segments of $\beta^+$. This gives us a new orthogonal arc system $\beta'$ with $i(\mu, \beta') \asymp \sys_Y(\mu)$. Moreover,  $(\beta^-)'$ must be disjoint from $(\beta^+)'$, as the two arcs are either the original arcs in $\beta$, or they are in different components of $S \setminus (\gamma_0 \cup \dots \cup \gamma_{2k})$.

So without loss of generality, we can assume that $\beta^+$ and $\beta^-$ are disjoint from $\gamma_0, \dots, \gamma_{2k-1}$. We can then use these arcs to build a curve $\gamma_{2k+1}$ that is transverse to $\gamma_{2k}$ and disjoint from $\gamma_0, \dots, \gamma_{2k-1}$ as follows.

There are two cases. First, suppose $\beta^+$ has an endpoints $p_1$ and $p_2$ on either side of $\gamma_{2k}$. Let $\sigma$ be a subarc of $\gamma_{2k}$ between $p_1$ and $p_2$.  Let $\gamma_{2k+1}$ be the geodesic in the free homotopy class of the concatenation $\beta \circ \sigma$. Again, by \cite[Lemma 4.4]{MZ},
 \begin{align*}
  i(\mu, \gamma_{2k+1}) & \leq i(\mu, \beta_1) + i(\mu, \sigma)\\
   & \prec \sys_Y(\mu)
 \end{align*}
 
 \begin{figure}[h!]
  \centering 
  \includegraphics{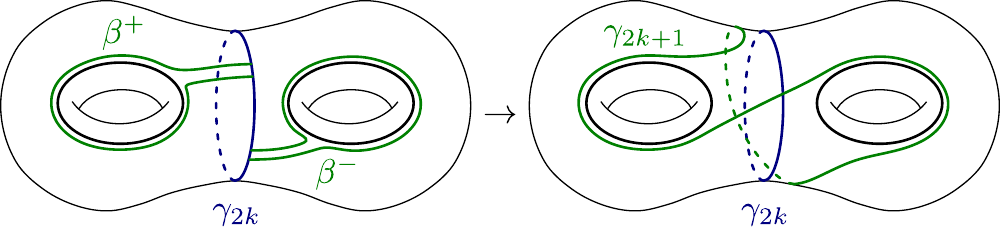}
  \caption{The case where the arc system has two arcs.}
  \label{fig:Arc_sys_to_closed_loop}
 \end{figure}

  Now, suppose $\beta^+$ has both endpoints, $p_1$ and $p_2$, on one side of $\gamma_{2k}$, and $\beta^-$ has both endpoints, $q_1$ and $q_2$, on the other side of $\gamma_{2k}$ (Figure \ref{fig:Arc_sys_to_closed_loop}). Up to relabeling the endpoints of $\beta^+$ and $\beta^-$, we can find disjoint subarcs $\sigma_1$ and $\sigma_2$ of $\gamma_{2k}$, so that $\sigma_1$ lies between $p_1$ and $q_1$, and $\sigma_2$ lies between $p_2$ and $q_2$. This is because $p_1, q_1$ and $p_2$ cut $\gamma_{2k}$ into three pieces, and $q_2$ can only lie in one of those pieces. Then we let $\gamma_{2k+1}$ be the closed geodesic in the free homotopy class of $\beta^+ \circ \sigma_1 \circ \beta^- \circ \sigma_2$. So we have 
  \begin{align*}
   i(\mu, \gamma_{2k+1}) & \leq i(\mu, \beta^+) + i(\mu, \sigma_1) + i(\mu, \beta^-) + i(\mu, \sigma_2)\\
   & \prec \sys_Y(\mu)
  \end{align*}

Now take the curves $\gamma_0, \dots, \gamma_{2k+1}$. These fill some subsurface $Y'$ of $Y$. If $Y'$ is homeomorphic to $Y$, then we are done. Otherwise, let $\gamma_{2k+2}$ be a boundary component of $Y'$ that is non-peripheral in $Y$. Since every curve in $\gamma_0, \dots, \gamma_{2k+1}$ intersects some other curve on this list, $\gamma_{2k+2}$ is not one of $\gamma_0, \dots, \gamma_{2k+1}$. We can homotope $\gamma_{2k+2}$ to a concatenation of subarcs of $\gamma_0, \dots, \gamma_{2k+1}$, that passes through each point on each subarc at most twice. Thus, 
\[
 i(\mu, \gamma_{2k+2}) \leq 2\sum_{i=0}^{2k+1} i(\mu, \gamma_i)
\]
As $k$ is bounded above in terms of $\chi(S)$, and $i(\mu, \gamma_i) \prec \sys_Y(\mu)$ by assumption, we again have 
\[
 i(\mu, \gamma_{2k+2}) \prec \sys_Y(\mu)
\]
Continuing on in this way, we get the marking of $Y$, as desired. (Note that $i(\mu, \gamma_i) \succ \sys_Y(\mu)$ by definition of the $Y$-systolic length of $\mu$.)
\end{proof}

\section{Another collar theorem}
\label{sec:Another_collar}
The usual collar lemma states that if $\alpha$ is any simple closed curve on a hyperbolic surface $X$, then $i(\mu, \alpha) \Col_X(\alpha) \leq \ell_X(\mu)$. Note that when $\mu$ is a closed curve, the same is true if $\alpha$ lies in a subsurface $Y$ of $X$, and we replace the length of $\mu$ in $X$ with just the length of $\mu$ restricted to $Y$. The following theorem is a mixed collar lemma in the case where $X = \pi(\mu)$. It replaces the length of $\mu$ in $Y$ with $i(\mu, \M)$ where $\M$ is any marking of $Y$. One should think of $i(\mu, \M)$ as a different way of measuring the length of $\mu$ restricted to $Y$.

\begin{theorem}
\label{thm:Another_Collar}
 Let $\mu \in \C_{fill}(S)$, and let $Y$ be a subsurface of $\pi(\mu)$ with $\ell_{\pi(\mu)}(\alpha) \leq c_b$ for each $\alpha$ in $\partial Y$, and $c_b$ the Bers constant. Then for each essential simple closed curve $\beta$ in $Y$ and marking $\M$ of $Y$,
 \[
  i(\mu, \beta) \Col_{\pi(\mu)}(\beta) \prec i(\mu, \M)
 \]
where the constant depends only on the topology of $S$.
\end{theorem}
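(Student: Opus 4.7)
The plan is to apply the mixed collar lemma (Theorem~\ref{thm:Full_Collar}) to a curve $\gamma \in \M$ crossing $\beta$, and combine with the standard collar lemma in the hyperbolic metric $\pi(\mu)$. Since $\M$ fills $Y$ and $\beta$ is essential and non-peripheral in $Y$, there is some $\gamma \in \M$ with $i(\gamma, \beta) \geq 1$. Applying Theorem~\ref{thm:Full_Collar} to the pair $(\gamma, \beta)$ and rearranging yields
\[
 i(\mu, \beta) \;\leq\; \frac{i(\gamma, \beta)}{D(\ell_{\pi(\mu)}(\beta))}\, i(\mu, \gamma) \;\leq\; \frac{i(\gamma, \beta)}{D(\ell_{\pi(\mu)}(\beta))}\, i(\mu, \M).
\]
Multiplying both sides by $\Col_{\pi(\mu)}(\beta)$ and invoking the standard collar lemma $\ell_{\pi(\mu)}(\gamma) \geq i(\gamma, \beta)\,\Col_{\pi(\mu)}(\beta)$ reduces the desired inequality to showing that $\ell_{\pi(\mu)}(\gamma)/D(\ell_{\pi(\mu)}(\beta))$ is bounded by a constant depending only on $\chi(S)$ for a suitably chosen $\gamma \in \M$.

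The main obstacle is that a general curve in the marking may have arbitrarily large $\pi(\mu)$-length and $D(L)\to 0$ as $L \to \infty$, so this ratio is not uniformly bounded a priori. To overcome this I would work with the finer form of the bound appearing in the proof of Theorem~\ref{thm:Full_Collar} (equation~\eqref{eq:Full_Collar_for_closed_curves}),
\[
 i(\mu, \gamma) \;\geq\; \frac{1}{\frac{1}{\delta(\ell_{\pi(\mu)}(\beta))}\, i(\gamma, \beta) + \tau_\gamma(\beta)}\, i(\mu, \beta),
\]
and choose $\gamma$ from the pants decomposition underlying $\M$ (or from its transverse curves) so that $i(\gamma, \beta)$ and $\tau_\gamma(\beta)$ are both bounded by constants depending only on $\chi(S)$. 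When $\beta$ itself lies in the pants decomposition of $\M$, the transverse curve in $\M$ satisfies these bounds by the definition of a marking; otherwise one picks a pants curve $\gamma_0 \in \M$ crossing $\beta$ and replaces it by the Dehn twist $T_\beta^n \gamma_0$ of minimal twisting, absorbing the resulting correction into a term controlled by $i(\mu, \beta)$ itself.

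The closing step is the asymptotic identity $\Col_{\pi(\mu)}(\beta) \cdot \delta(\ell_{\pi(\mu)}(\beta)) \prec 1$, which follows from the explicit form of the constant $d$ in the proof of Proposition~\ref{prop:Mixed_Collar_Orthogonal_Arcs}: in the short-$\beta$ regime, $d(\ell) \asymp 1/\Col_{\pi(\mu)}(\beta)$ (since $d(\ell) = C(\epsilon)/(4 + 8\,\Col + C(\epsilon))$ is dominated by the $\Col$ in the denominator), while for $\ell_{\pi(\mu)}(\beta)$ bounded or large, $\Col_{\pi(\mu)}(\beta)$ is itself bounded by a topological constant. Combining this asymptotic with the controlled denominator from the previous step produces the desired bound $i(\mu, \beta)\,\Col_{\pi(\mu)}(\beta) \prec i(\mu, \M)$.
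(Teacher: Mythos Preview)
Your approach has a genuine gap: the mixed collar lemma is quantitatively too weak to yield Theorem~\ref{thm:Another_Collar} directly. From the inequality you extract, after multiplying by $\Col_{\pi(\mu)}(\beta)$ you need
\[
 \Bigl(\tfrac{1}{\delta(\ell_{\pi(\mu)}(\beta))}\, i(\gamma,\beta)+\tau_\gamma(\beta)\Bigr)\,\Col_{\pi(\mu)}(\beta)\ \prec\ 1.
\]
But your own asymptotic $d(\ell)\asymp 1/\Col_{\pi(\mu)}(\beta)$ in the short-$\beta$ regime gives $1/\delta\asymp\Col_{\pi(\mu)}(\beta)$, so the first term is of order $i(\gamma,\beta)\,\Col_{\pi(\mu)}(\beta)^2\geq\Col_{\pi(\mu)}(\beta)^2$, which blows up. Your ``closing step'' asserts $\Col\cdot\delta\prec 1$, which is true but is the wrong direction; the argument actually requires $\Col/\delta\prec 1$, and that is false. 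The proposed fixes do not help: for an arbitrary marking $\M$ and an arbitrary essential $\beta\subset Y$ there is in general no $\gamma\in\M$ with $i(\gamma,\beta)$ bounded by a topological constant, and replacing $\gamma_0$ by $T_\beta^n\gamma_0$ leaves $i(\cdot,\beta)$ unchanged while taking you outside $\M$, so the correction term $|n|\,i(\gamma_0,\beta)\,i(\mu,\beta)$ cannot be absorbed.

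The paper proves this theorem by an entirely different mechanism, not through Theorem~\ref{thm:Full_Collar}. It builds a comparison metric $X$ from $\pi(\mu)$ via Fenchel--Nielsen coordinates, keeping the coordinates outside $Y$ and resetting the pants curves of $\M$ inside $Y$ to have length $1$ and twist $0$. The length-minimizing property $\ell_{\pi(\mu)}(\mu)<\ell_X(\mu)$ is then combined with two geometric estimates (Lemmas~\ref{lem:comparison_on_Yc} and~\ref{lem:comparison_on_Yth}) comparing the lengths of $\mu$ on the isometric and non-isometric pieces of the two metrics, to conclude $\ell_{\pi(\mu)}(\mu\cap Y^{bd})\prec i(\mu,\M)$; the standard collar about $\beta$ sits inside $Y^{bd}$, and the theorem follows. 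This is a genuinely different use of the minimizer property than the one encoded in the mixed collar lemma.
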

This theorem holds for any subsurface $Y$ of $\pi(\mu)$ with hyperbolically short boundary components, which is more general than the case where $Y$ is a thick component. Moreover, it holds for any marking $\M$ of $Y$. In the following section, though, we will apply it to the case where $Y$ is a thick component and $\M$ is the $\mu$-short marking found in the previous section.
\begin{proof}
 We will prove this theorem assuming Lemmas \ref{lem:comparison_on_Yc} and \ref{lem:comparison_on_Yth} below. We will then prove these lemmas in Sections \ref{subsec:comparison_on_Yc} and \ref{subsec:comparison_on_Yth}, respectively.

For now, let $\gamma$ be a filling closed curve, rather than an arbitrary filling current. Let $\pi(\gamma)$ be its length minimizing metric, and let $Y$ be subsurface of $\pi(\gamma)$ so that $\ell_{\pi(\gamma)}(\alpha) < c_b$ for each boundary component $\alpha$ of $Y$. For each curve $\alpha$ on the boundary of $Y$, let $C(\alpha)$ be the regular neighborhood of $\alpha$ in $Y$ of width $\Col_{\pi(\gamma)}(\alpha)$, for 
\[
 \Col_{\pi(\gamma)}(\alpha) = \sinh^{-1}\left(\frac{1}{\sinh(\ell_{\pi(\gamma)}(\alpha)/2)} \right )
\]
the constant coming from the collar lemma. Let $C$ be the union of all such collar neighborhoods. We then let $Y^{bd} = Y \setminus C$. Note that if $Y$ were a thick component of $\pi(\gamma)$, then the diameter of $Y^{bd}$ would be bounded above by a universal constant depending only on $\chi(S)$. 

For any subset $Z$ of a hyperbolic surface $X$, we define $\ell_X(\gamma \cap Z)$ to be the length of $X$-geodesic representative of $\gamma$ intersected with $Z$, where the length is taken with respect to $X$. 

Let $\M$ be any marking of $Y$. We will first show
\begin{equation}
\label{eq:Bdd_length}
  \ell_{\pi(\gamma)}(\gamma \cap Y^{bd}) \prec i(\gamma, \M)
\end{equation}

To show this, we will construct a new metric $X$ that is isometric to $\pi(\gamma)$ outside of $Y^{bd}$, where each curve in $\M$ is of bounded length. We will then bound the length of $\gamma$ in $X$, and use that bound to estimate the length of $\gamma$ in $Y^{bd}$.
 
We build the metric $X$ on $S$ as follows: Let $\delta_1, \dots, \delta_k$ be the pants decomposition of $Y$ coming from the marking $\M$. Extend this to a pants decomposition 
\[
\p = \{\delta_1, \dots, \delta_k, \alpha_1, \dots, \alpha_n\}
\]
of $S$ that contains $\partial Y$. We define $X$ using Fenchel-Nielsen coordinates relative to $\p$. Choose the length and twist coordinates of $\alpha_1, \dots, \alpha_n$ to be the same as those of $\pi(\gamma)$. Next, choose the length coordinates of $\delta_1, \dots, \delta_k$ to all be 1, and their twist coordinates to all be 0. With this, we will have $\ell_X(\delta_i) \asymp 1$ for all $\delta_i \in \M$. That is, both the pants curves and the transverse curves in $\M$ will have uniformly bounded length in $X$.

Let $Y^c = \pi(\gamma) \setminus Y^{bd}$. Choose a homeomorphism $f: \pi(\gamma) \to X$, respecting the markings on $\pi(\gamma)$ and $X$, which is an isometry on $Y^c$. This is possible because the Fenchel-Nielsen coordinates on $\alpha_1, \dots, \alpha_n$ are the same for both $\pi(\gamma)$ and $X$, where $\alpha_1, \dots, \alpha_n$ includes the boundary curves of $Y$. Then on $X$, let $Z^{bd} = f(Y^{bd})$, and $Z^c = f(Y^c)$. This gives us the following decompositions of $\pi(\gamma)$ and $X$, where the isometric components are marked:
\[
\begin{matrix}
 \pi(\gamma) & = &Y^c                                   & \cup & Y^{bd}\\
            &    & \rotatebox[origin = c]{270}{$\cong$} &      &   \\
 X          & =  & Z^c                                  & \cup  & Z^{bd}
 \end{matrix}
\]

Since $\pi(\gamma)$ is the length minimizer of $\gamma$, we have 
\[
\ell_{\pi(\gamma)}(\gamma) < \ell_X(\gamma)
\]
Decomposing these two lengths gives the inequality 
\begin{align*}
 \ell_{\pi(\gamma)}(\gamma \cap Y^c) + \ell_{\pi(\gamma)}(\gamma \cap Y^{bd})  < \ell_X(\gamma \cap Z^c) + \ell_X(\gamma \cap Z^{bd}) 
\end{align*}
Now, by Lemma \ref{lem:comparison_on_Yc}, we have $  \ell_X(\gamma \cap Z^c)-\ell_{\pi(\gamma)}(\gamma \cap Y^c)   \prec i(\gamma, \M)$. Thus,
\[
 \ell_{\pi(\gamma)}(\gamma \cap Y^{bd})- \ell_X(\gamma \cap Z^{bd}) \prec i(\gamma, \M)
\]
But by Lemma \ref{lem:comparison_on_Yth}, we also have $ \ell_X(\gamma \cap Z^{bd}) \prec i(\gamma, \M)$. Therefore, 
\[
 \ell_{\pi(\gamma)}(\gamma \cap Y^{bd}) \prec i(\gamma, \M)
\]
where all constants depend only on $\chi(S)$. This proves Equation (\ref{eq:Bdd_length}).

Let $\beta$ be any non-peripheral, simple closed curve on $Y$. By the collar lemma, collars about disjoint simple closed curves are disjoint. As $Y^{bd}$ is formed by removing collars about the boundary curves of $Y$, we have that the $\Col_{\pi(\gamma)}(\beta)$-collar about $\beta$ lies entirely in $Y^{bd}$. Therefore, 
 \[
  i(\gamma, \beta) \Col_{\pi(\gamma)}(\beta) \leq \ell_{\pi(\gamma)}(\gamma \cap Y^{bd})
 \] 
Thus, by Equation \ref{eq:Bdd_length},
 \[
i(\gamma, \beta) \Col_{\pi(\gamma)}(\beta) \prec i(\gamma, \M)
 \]

Now, suppose $\mu$ is an arbitrary filling current. Then there is a sequence of filling closed geodesics $\gamma_n$, and constants $c_n > 0$ so that $c_n \gamma_n \to \mu$. The length minimizing projection $\pi$ is continuous and scale invariant. Thus,
 \[
  \lim_{n \to \infty} \pi(\gamma_n) = \pi(\mu)
 \]
By continuity, if a subsurface $Y$ of $S$ has boundary components shorter than the Bers constant in $\pi(\mu)$, then for all $n$ large enough, the same is true in $\pi(\gamma_n)$. Thus, for any essential, non-peripheral simple closed curve $\beta$ in $Y$,
 \[
i(c_n \gamma_n, \beta) \Col_{\pi(\gamma_n)}(\beta) \prec i(c_n\gamma, \M)
 \]
where we can scale both sides by $c_n$ as intersection number is linear. Taking limits, we get 
\[
 i(\mu, \beta) \Col_{\pi(\mu)}(\beta) \prec i(\mu, \M)
\]
as required.
\end{proof}

\subsection{Length outside of $Y^{bd}$}
\label{subsec:comparison_on_Yc}
We prove the first of the two lemmas needed for Theorem \ref{thm:Another_Collar}. Using the notation in the proof above, we show:
\begin{lem}
\label{lem:comparison_on_Yc}
On the complement of $Y^{bd}$, 
\[
 \ell_X(\gamma \cap Z^c) - \ell_{\pi(\gamma)}(\gamma \cap Y^c)\prec i(\gamma, \M)
\]
where the constant depends only on $\chi(S)$.
\end{lem}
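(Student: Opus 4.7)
The plan is to bound $\ell_X(\gamma \cap Z^c)$ by constructing an auxiliary closed curve $\gamma'$ in $X$, freely homotopic to $\gamma$, whose restriction to $Z^c$ agrees with the $f$-pushforward of the $\pi(\gamma)$-geodesic arcs in $Y^c$, and then invoking the length-minimizing property of the $X$-geodesic representative $\gamma^{(2)}$ of $\gamma$.

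First, let $\gamma^{(1)}$ denote the $\pi(\gamma)$-geodesic representative of $\gamma$, and decompose it as an alternating concatenation $A_1 \cdot B_1 \cdot A_2 \cdot B_2 \cdots$ of maximal subarcs $A_i \subset Y^c$ and $B_i \subset Y^{bd}$, with endpoints on $\partial Y^{bd}$. Since $f$ is an isometry from $Y^c$ onto $Z^c$, each pushforward $f(A_i)$ is an $X$-geodesic arc in $Z^c$ and $\sum_i \ell_X(f(A_i)) = \ell_{\pi(\gamma)}(\gamma^{(1)} \cap Y^c)$. I would then define $\gamma' = f(A_1) \cdot G_1 \cdot f(A_2) \cdot G_2 \cdots$, where each $G_i$ is an $X$-geodesic arc in $Z^{bd}$ with endpoints matching those of $f(A_{i-1})$ and $f(A_i)$ on $\partial Z^{bd}$, chosen in the same homotopy class rel $\partial Z^{bd}$ as $f(B_i)$. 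By construction $\gamma'$ is freely homotopic to $\gamma$ in $X$, so length-minimization yields
\[
\ell_X(\gamma^{(2)} \cap Z^c) \leq \ell_X(\gamma^{(2)}) \leq \ell_X(\gamma') = \ell_{\pi(\gamma)}(\gamma^{(1)} \cap Y^c) + \sum_i \ell_X(G_i),
\]
so the lemma reduces to showing $\sum_i \ell_X(G_i) \prec i(\gamma, \M)$.

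To prove this bound, I would exploit the uniformly bounded geometry of $Z^{bd}$ in the metric $X$: every $\delta_i \in \M$ has $\ell_X(\delta_i) = 1$ by our choice of Fenchel--Nielsen coordinates, and each boundary curve $\eta_j$ of $Z^{bd}$ has length uniformly bounded in terms of $\chi(S)$, since a collar around a geodesic shorter than the Bers constant has boundary length stabilizing at a universal value. A standard bounded-geometry estimate then gives $\ell_X(G_i) \prec 1 + i(G_i, \M)$ with constants depending only on $\chi(S)$. Since $G_i$ is the $X$-geodesic representative of its homotopy class rel $\partial Z^{bd}$ and geodesics minimize intersections with simple closed geodesics, $i(G_i, \M) \leq i(f(B_i), \M) = i(B_i, \M)$ by topological invariance of intersection numbers under the homeomorphism $f$. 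Summing therefore gives $\sum_i \ell_X(G_i) \prec N + i(\gamma, \M)$, where $N$ is the total number of $Y^{bd}$-arcs of $\gamma^{(1)}$.

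The principal obstacle is controlling the additive term $N$, which a priori is comparable to $i(\gamma, \partial Y)$ rather than $i(\gamma, \M)$. The strategy here is to note that each $G_i$ is essential rel $\partial Z^{bd}$ (because $\gamma^{(1)}$ and the geodesics $\eta_j$ meet minimally in $\pi(\gamma)$), and that any $G_i$ with no $\M$-intersections must lie in a complementary region of $\M$ in $Z^{bd}$, of which there are only finitely many topological types. Since the geodesic $\gamma^{(1)}$ cannot traverse two such ``trivial'' types consecutively without creating a bigon that would contradict minimal position, one can pair each short arc with a neighboring $\M$-crossing to conclude $N \prec i(\gamma, \M) + 1$, closing the estimate. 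The extension from closed curves to arbitrary filling currents $\mu$ then follows by approximating $\mu$ by scaled sequences of closed geodesics and passing to the limit using continuity of $\pi$ and of the intersection pairing, exactly as in the end of the proof of Theorem \ref{thm:Another_Collar}.
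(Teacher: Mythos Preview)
Your approach is genuinely different from the paper's. The paper never builds a comparison curve; instead it exploits the isometry $Y^c \cong Z^c$ to lift both the $X$-geodesic and the $\pi(\gamma)$-geodesic of $\gamma$ to the \emph{same} region $\tilde Z^c \subset \mathbb{H}^2$, relates them via the boundary map $\partial g$ induced by the marking, and shows by explicit hyperbolic computations that for each arc the two geodesics exit the collar-boundary curve $\tilde\alpha_\theta$ at points a bounded distance apart (the bound depending only on $\ell(\alpha)\le c_b$). Summing over arcs gives the lemma. Your global route via $\gamma'$ avoids these computations, but the work reappears elsewhere: your claimed estimate $\ell_X(G_i)\prec 1+i(G_i,\M)$ is essentially the content of Lemma~\ref{lem:comparison_on_Yth}, whose proof requires its own universal-cover argument to handle geodesic arcs in the annular complementary regions.

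The gap you flag in bounding $N$ is real, and your bigon argument does not close it. The curves you call $\eta_j$ --- the collar boundaries $\partial Y^{bd}$ --- are equidistant curves, not geodesics, so there is no ``minimal position'' of $\gamma^{(1)}$ with them; a geodesic can meet an equidistant curve twice, and hence arcs $B_i\subset Y^{bd}$ with both endpoints on the same collar boundary and $i(B_i,\M)=0$ are not excluded by any bigon argument. The correct argument is: if such a $B_i$ existed, the full geodesic $\tilde\gamma\supset\tilde B_i$ would meet $\tilde\alpha_\theta$ exactly at $\partial\tilde B_i$, forcing both adjacent arcs $A_{i-1},A_i\subset Y^c$ to cross the geodesic $\alpha$ itself. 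Then the entire arc of $\gamma^{(1)}\cap Y$ between those two $\partial Y$-crossings meets $Y^{bd}$ only in $B_i$; but any geodesic arc with both endpoints on the geodesic multicurve $\partial Y$ cannot remain in a peripheral annulus of $\M$ (two distinct geodesics meet at most once), so it must cross $\M$ --- and that crossing lies in $B_i$, a contradiction. Hence every $B_i$ crosses $\M$, giving $N\le i(\gamma,\M)$ outright. With this repair (and an independent proof of the $G_i$-length bound) your strategy does go through.
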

Recall that $\ell_X(\gamma \cap Z^c)$ is the length of the $X$-geodesic representative of $\gamma$ intersected with $Z^c$, where the length is taken with respect to $X$. Similarly, $\ell_{\pi(\gamma)}(\gamma \cap Y^c)$ is the length of the $\pi(\gamma)$-geodesic representative of $\gamma$ intersected with $Y^c$, where the length is taken with respect to $\pi(\gamma)$.
\begin{proof}
 Let $\alpha$ be a curve in $\partial Z$ on $X$. Identify the universal covers of $X$ and $\pi(\gamma)$ with the upper half plane $\h \subset \mathbb C$, and take covering maps 
 \[
  \begin{array}{lll}
    \h & & \h\\
      \Big\downarrow p_1 & \text{ and } & \Big\downarrow p_2 \\
   X & & \pi(\gamma) 
  \end{array}
 \]
Suppose both covering maps send the imaginary axis to the geodesic representative of $\alpha$ (Figure \ref{fig:Complement_length_subsurface_lift}). We will thus refer to the imaginary axis as $\tilde \alpha$ from now on. As $\alpha$ is contained in $Z^c$, take the connected component $\tilde Z^c$ of $p_1^{-1}(Z^c)$ that contains $\tilde \alpha$. Since $Z^c$ is isometric to $Y^c$, we arrange it so that $\tilde Z^c$ is also a connected component of $p_2^{-1}(Y^c)$.

\begin{figure}[h!]
 \centering 
 \includegraphics{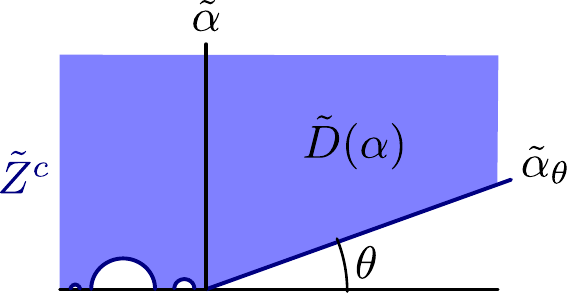}
 \caption{The shaded region is $\tilde Z^c$, and the region between $\tilde \alpha$ and $\tilde \alpha_\theta$ is $\tilde D(\alpha)$.}
 \label{fig:Complement_length_subsurface_lift}
\end{figure}

 The boundary of $\tilde Z^c$ contains a ray $\tilde \alpha_\theta$ coming out of the origin, where each point is at distance $\Col(\alpha)$ from $\tilde \alpha$. (We write $\Col(\alpha)$ to denote the collar width of $\alpha$ in both $\pi(\gamma)$ and $X$.) The ray $\tilde \alpha_\theta$ makes an angle $\theta$ with the real axis. We can compute that 
 \[
  \tan \frac \theta 2 = e^{-\Col(\alpha)}
 \]
 (See Lemma \ref{lem:Hyperbolic_geometry} in the Appendix.)
 
 If $D(\alpha)$ is the collar neighborhood of $\alpha$ in $X$, then it lifts to a set $\tilde D(\alpha)$ bounded on one side by $\tilde \alpha$, and on the other side by the curve $\tilde \alpha_\theta$.  Note that $\tilde D(\alpha)$ is also a lift of the collar neighborhood $C(\alpha)$ of $\alpha$ in $\pi(\gamma)$.
 
 Take a homeomorphism 
 \[
 g: X \to  \pi(\gamma)
 \]
 induced by the markings on the two points in $\T(S)$. Then $g$ induces a homeomorphism $\partial g: \partial \h \to \partial \h$. Identifying the set of geodesics in $\h$ with the set of unordered pairs of distinct points on $\partial \h$, we see that $\partial g$ also acts on the set of geodesics in $\h$. Choose the lift $\partial g$ so that it fixes $\tilde \alpha$. 
 
 Let $\tilde \beta$ be the geodesic in $\h$ with endpoints -1 and 1.  As $\alpha$ has the same length in both $X$ and $\pi(\gamma)$, it acts on $\h$ by the isometry $z \mapsto e^{\ell(\alpha)}z$ in both covers. Consider the set 
 \[
  \{\alpha^n \cdot \tilde \beta\}_{n \in \Z} = \{e^{n \ell(\alpha)} \tilde \beta\}_{n \in \Z}
 \]
 \begin{figure}[h!]
  \centering 
  \includegraphics{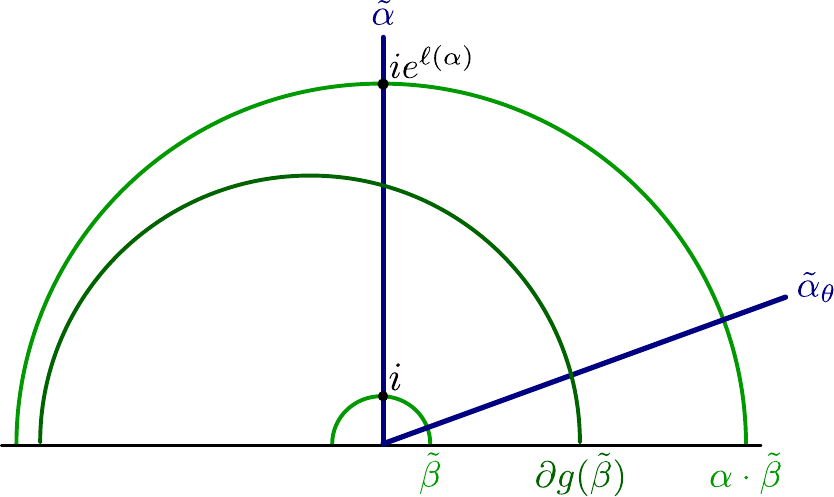}
  \caption{}
  \label{fig:Complement_length_beta_curves}
 \end{figure}

 Since the length and twist parameters of $\alpha$ are the same in both $\pi(X)$ and $X$, we have that $\partial g(\tilde \beta)$ is disjoint from $\alpha^n \tilde \beta$ for all $n$. Without loss of generality, suppose we chose $\partial g$ so that $\partial g(\tilde \beta)$ lies between $\tilde \beta$ and $\alpha \cdot \tilde \beta$ (Figure \ref{fig:Complement_length_beta_curves}). As $\partial g$ preserves the order of points on $\R$, this means that, for all $p>0$ with $e^{n \ell(\alpha)} \leq p < e^{(n+1) \ell(\alpha)}$ for some $n \in \Z$, we have $e^{n \ell(\alpha)} \leq \partial g(p) < e^{(n+2) \ell(\alpha)}$, and similarly for $p < 0$.
 
 \begin{figure}[h!]
  \centering 
  \includegraphics{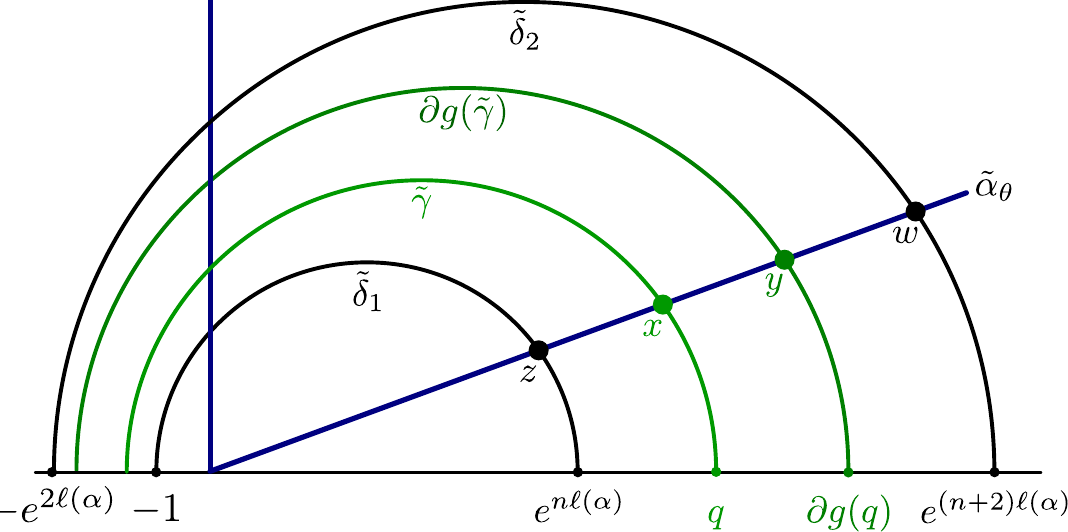}
  \caption{}
  \label{fig:Complement_length_gamma_crosses_alpha}
 \end{figure}

Let $\tilde \gamma$ be a lift of an $X$-geodesic representative of $\gamma$ to $\h$ that passes through $\tilde D(\alpha)$. Suppose first that $\tilde \gamma$ intersects $\tilde \alpha$ (Figure \ref{fig:Complement_length_gamma_crosses_alpha}). Then since $\partial g$ preserves intersections between geodesics, the geodesic $\partial g(\tilde \gamma)$ must also intersect $\tilde \alpha$. Then both $\tilde \gamma$ and $\partial g(\tilde \gamma)$ intersect $\tilde \alpha_\theta$ in exactly one point. Suppose $\tilde \gamma$ intersects $\tilde \alpha_\theta$ at a point $x$, and $\partial g(\tilde \gamma)$ intersects $\tilde \alpha_\theta$ at a point $y$. We will show that 
\[
d(x,y) \prec 1
\]
for a constant depending only on $\chi(S)$.

The action of $\alpha$ is an isometry that preserves $\tilde \alpha_\theta$. So we can act on $\tilde \gamma$ and $\partial g(\tilde \gamma)$ by $\alpha$ without changing the distance between $x$ and $y$. So up to composing with some power of $\alpha$, the endpoints of $\tilde \gamma$ on $\R$ are $p$ and $q$ with 
\[
 - e^{\ell(\alpha)} < p \leq -1, \text{ and } \ell^{n \ell(\alpha)} \leq q < \ell^{(n+1) \ell(\alpha)}
\]
Because $\partial g(\tilde \beta)$ lies between $\tilde \beta$ and $\alpha \cdot \tilde \beta$, we must have that 
\[
  - e^{2\ell(\alpha)} < g(p) \leq -1, \text{ and } \ell^{n \ell(\alpha)} \leq g(q) < \ell^{(n+2) \ell(\alpha)}
\]

Let $\tilde \delta_1$ be the geodesic with endpoints $-1$ and $\ell^{n \ell(\alpha)}$, and $\tilde \delta_2$ be the geodesic with endpoints $- e^{2\ell(\alpha)}$ and $\ell^{(n+2) \ell(\alpha)}$. Then both $\tilde \gamma$ and $\partial g(\tilde \gamma)$ lie between $\tilde \delta_1$ and $\tilde \delta_2$. Note that $\alpha^2 \cdot \tilde \delta_1 = \tilde \delta_2$. Again, the action of $\alpha$ preserves $\tilde \alpha_\theta$. So if $\tilde \delta_1$ intersects $\tilde \alpha_\theta$ in a point $z$ and $\tilde \delta_2$ intersects $\tilde \alpha_\theta$ in $w$, then $w= \alpha^2(z)$. That is, $w = e^{2 \ell(\alpha)} z$. From this, we can compute the length of the segment of $\tilde \alpha_\theta$ between $z$ and $w$, and conclude that
\[
  d(x, y) \leq 2 \ell(\alpha) \csc(\theta)
\]
We can compute that 
\[
 \sin(\theta) = \tanh\left (\frac{\ell(\alpha)}{2}\right)
\]
(see Lemma \ref{lem:Hyperbolic_geometry}) and so,
\[
  d(x, y) \leq 2 \frac{\ell(\alpha)}{\tanh\left (\frac{\ell(\alpha)}{2}\right)}
\]

Note that $x/ \tanh(x/2)$ is an increasing function, and it tends to 4 as $x$ goes to 0. So since $\alpha$ is a Bers-short curve, we see that 
\begin{equation}
\label{eq:dist_on_alpha_theta_for_intersecting}
 d(x, y) \prec 1
\end{equation}
for a constant depending only on $\chi(S)$.

We have that $\tilde \gamma$ enters and exits $\tilde Z^c$ through lifts of collar neighborhoods about curves in $\partial Z$. In fact, if $\tilde \gamma$ crosses $\tilde \alpha$ after it enters $\tilde Z^c$, then it must exit after intersecting another lift of a boundary curve of $Z$. Thus, Equation (\ref{eq:dist_on_alpha_theta_for_intersecting}) implies that 
\begin{equation}
\label{eq:Length_change_for_crossing}
 \ell(\tilde \gamma \cap \tilde Z^c) -  \ell(\partial g(\tilde \gamma) \cap \tilde Z^c) \prec 1
\end{equation}
where the constant is twice that of Equation (\ref{eq:dist_on_alpha_theta_for_intersecting}).

\begin{figure}[h!]
 \centering 
 \includegraphics{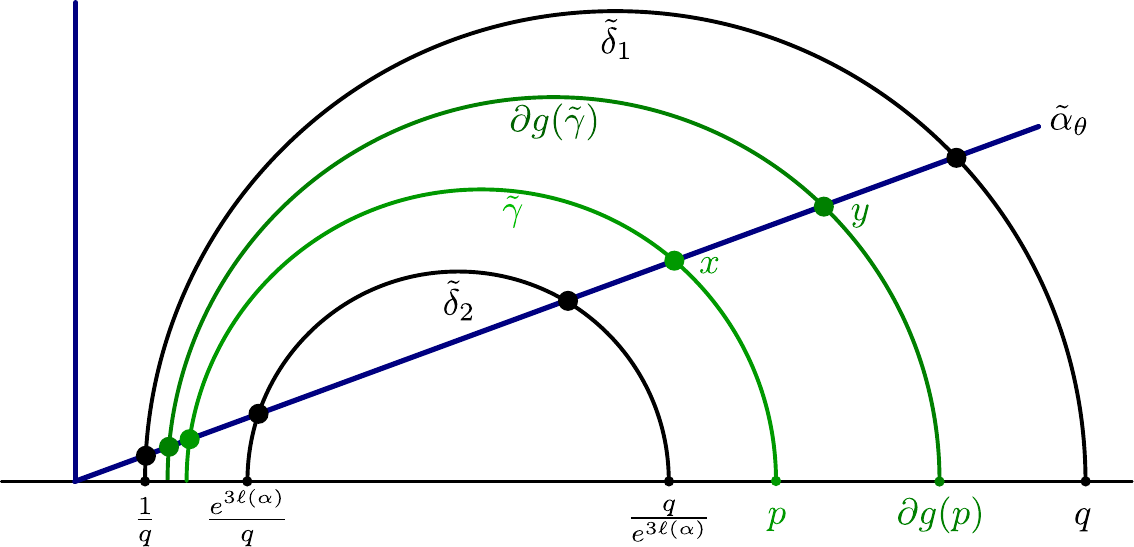}
 \caption{}
 \label{fig:Complement_length_gamma_disjoint_alpha}
\end{figure}

Now, suppose that $\tilde \gamma$ passes through $\tilde D(\alpha)$, but does not intersect $\tilde \alpha$ (Figure \ref{fig:Complement_length_gamma_disjoint_alpha}). Then $\tilde \gamma$ intersects $\tilde \alpha_\theta$ in two points: $x$ and $x'$. Label these points so that $|x'| < |x|$ in the Euclidean metric. The map $z \mapsto az$ is an isometry for all $a \in \R$ that fixes both $\tilde \alpha$ and $\tilde \alpha_\theta$. Thus, $\ell(\tilde \gamma \cap \tilde D(\alpha)) = \ell(a \tilde \gamma \cap \tilde D(\alpha))$ for all $a \in \R_{>0}$. So we can assume without loss of generality that the endpoints of $\tilde \gamma$ on $\R$ are $p$ and $1/p$ for some $p > 1$. Then there is some $n$ so that 
\[
 e^{n \ell(\alpha)} \leq p < e^{(n+1)\ell(\alpha)}
\]
for some $n \in \Z$. Thus, we have
\[
 e^{n \ell(\alpha)} \leq \partial g(p) < e^{(n+2)\ell(\alpha)}
\]
This time, let $\tilde \delta_1$ be the geodesic with endpoints 
\[q = e^{(n+2)\ell(\alpha)} \text{ and } \frac 1 q = e^{-(n+2)\ell(\alpha)}\]
and $\tilde \delta_2$ be the geodesic with endpoints 
\[\frac{q}{e^{3 \ell(\alpha)}} = e^{(n-1)\ell(\alpha)} \text{ and }\frac{e^{3 \ell(\alpha)}}{q} = e^{-(n-1) \ell(\alpha)}\]
Then both $\gamma$ and $\partial g(\gamma)$ lie between $\tilde \delta_1$ and $\tilde \delta_2$. 

\begin{figure}[h!]
 \centering 
 \includegraphics{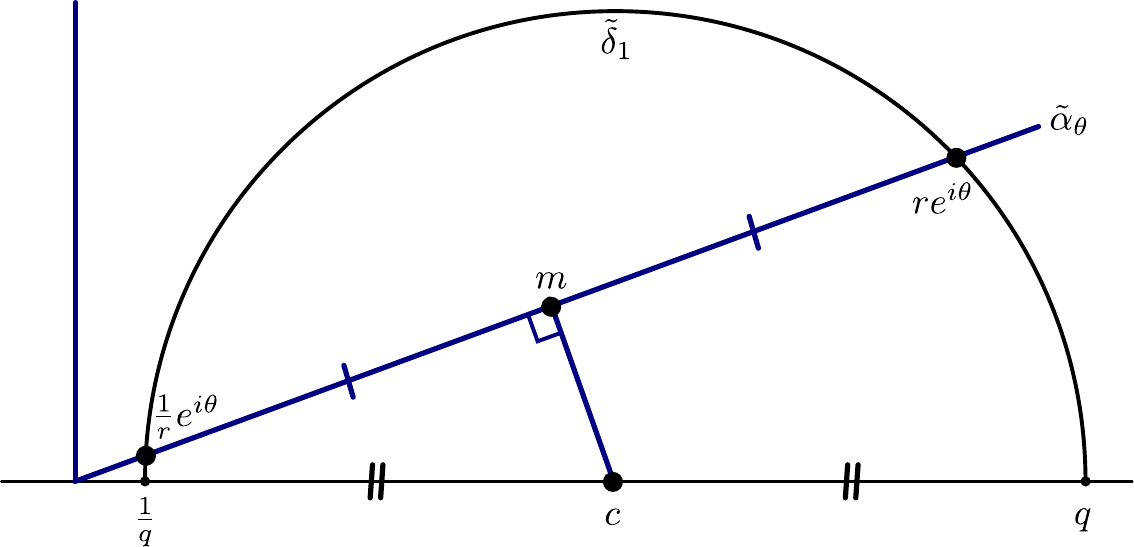}
 \caption{}
 \label{fig:Complement_length_delta_1_comp}
\end{figure}

Since $\tilde \delta_1$ lies above $\tilde \gamma$, it must intersect $\tilde \alpha_\theta$ (Figure \ref{fig:Complement_length_delta_1_comp}). In fact, since both $\tilde \alpha_\theta$ and $\delta_1$ are invariant under the isometry $z \mapsto \frac{1}{\bar z}$, it must intersect $\tilde \alpha_\theta$ at points $r e^{i \theta}$ and $\frac 1r e^{i\theta}$ for some $r > 0$. The midpoint along $\tilde \alpha_\theta$ of these two points is $m = \frac 12 (r + \frac 1r) e^{i \theta}$. If we draw a line perpendicular to $\tilde \alpha_\theta$ through $m$, it will intersect $\R$ in the Euclidean center $c$ of $\tilde \delta_1$. Thus, we see that 
\[
 c = \frac 12 (r + \frac 1r) \sec \theta
\]
On the other hand, the Euclidean center of the semicircle $\tilde \delta_1$ is $c = \frac 12 (q + \frac 1q)$. So we get that 
\begin{equation}
\label{eq:circle_equality}
 q+ \frac 1q = (r + \frac 1r) \sec \theta
\end{equation}

It is possible that $\tilde \delta_2$ does not intersect $\tilde \alpha_\theta$. In this case, there is some $x < 3$ for which the geodesic $\tilde \delta_2'$ with endpoints 
\[
 \frac{q}{e^{x \ell(\alpha)}} \text{ and } \frac{e^{x \ell(\alpha)}}{q} 
\]
is tangent to $\tilde \alpha_\theta$. In this case, by invariance under $z \mapsto \frac{1}{\bar z}$, we see that $\tilde \delta_2'$ is tangent to $\tilde \alpha_\theta$ at $e^{i \theta}$. The line perpendicular to $\tilde \alpha_{\theta}$ at $e^{i \theta}$ intersects $\R$ at the Euclidean center $c'$ of $\tilde \delta_2'$, so we see that $c' = \sec \theta$. In other words, 
\[
  \frac{q}{e^{x \ell(\alpha)}} + \frac{e^{x \ell(\alpha)}}{q} = \sec \theta
\]
Bounding the left-hand side from below by $e^{-x \ell(\alpha)}(q + \frac 1q)$, we deduce that 
\[
 e^{-x \ell(\alpha)} (r + \frac 1r) \sec \theta < \sec \theta
\]
Or, in other words, using that $x < 3$, 
\[
 r + \frac 1r < e^{3 \ell(\alpha)}
\]
In particular, $r < e^{3 \ell(\alpha)}$. The hyperbolic length of the segment of $\tilde \alpha_\theta$ between $re^{i \theta}$ and $\frac 1r e^{i \theta}$ is $2 \ln r \sin \theta$. Therefore, it is bounded above by $6 \ell(\alpha) \sin \theta$. We have previously computed that $\sin(\theta) = \tanh(\ell(\alpha))$. As $\ell(\alpha) \prec 1$, we have
\[
 \ell(\tilde \gamma \cap \tilde D(\alpha)) \prec 1
\]
In particular, if $\tilde \delta_2$ does not pass through $\tilde D(\alpha)$, then
\[
 \ell(\tilde \gamma \cap \tilde Z^c) -  \ell(\partial g(\tilde \gamma) \cap \tilde Z^c)\prec 1
\]
where we replace $\tilde D(\alpha)$ with $\tilde Z^c$ since the entire intersection between $\tilde \gamma$ and $\tilde Z^c$ is contained in $\tilde D(\alpha)$.

So suppose $\tilde \delta_2$ does intersect $\tilde \alpha_\theta$. Then because $\tilde \delta_2$ and $\tilde \alpha_\theta$ are invariant under $z \mapsto \frac{1}{\bar z}$, they must intersect at points $r' e^{i \theta}$ and $\frac 1r e^{i \theta}$ for some $r' > 0$. By the same argument as above, we see that 
\[
   \frac{q}{e^{3 \ell(\alpha)}} + \frac{e^{3 \ell(\alpha)}}{q} =(r' + \frac 1r') \sec \theta
\]
And since the left-hand side of this is bounded below by $e^{-3 \ell(\alpha)}(q + \frac 1q)$, we get that 
\[
 e^{-3 \ell(\alpha)}(r + \frac 1r)\sec \theta < (r' + \frac 1r') \sec \theta
\]
In particular, as $r + \frac 1r > r$ and $r' + \frac 1r' < 2 r'$, we see that $\frac{r}{r'} < 2e^{3 \ell(\alpha)}$. Thus, the length of the segment of $\tilde \alpha_\theta$ between $r' e^{i \theta}$ and $re^{i \theta}$ is at most $3\ln(2)\ell(\alpha) \sin \theta$, where again, $\ell(\alpha) \sin(\theta) \prec 1$. By symmetry, the same is true for the other pair of intersections of $\tilde \delta_1$ and $\tilde \delta_2$ with $\tilde \alpha_\theta$. Therefore, regardless of whether $\tilde \delta_2$ passes through $\tilde D(\alpha)$ or not,
\begin{equation}
\label{eq:length_change_in_C}
 \ell(\tilde \gamma \cap \tilde Z^c) - \ell(\partial g(\tilde \gamma) \cap \tilde Z^c) \prec 1
\end{equation}
where the constant depends only on $\chi(S)$. Again, we use that if $\tilde \gamma$ does not intersect $\tilde \alpha$, then $\tilde \gamma \cap \tilde D(\alpha) = \tilde \gamma \cap \tilde Z^c$, where $\tilde Z^c$ was a lift of both $Z^c$ and $Y^c$.

Now we are ready to show the statement of the lemma. Choose lifts $\tilde \gamma_1, \dots, \tilde \gamma_n$ of the $X$-geodesic representative of $\gamma$ to $\h$ that intersect $\tilde Z^c$, and so that $\tilde Z^c \cap (\tilde \gamma_1 \cup \dots \cup \tilde \gamma_n)$ is in 1-1 correspondence with $\gamma \cap (Y^c \cup C)$ in $X$. In particular, 
\[
 \ell_X(\gamma \cap Z^c) = \sum_{i=1}^n \ell(\tilde \gamma_i \cap \tilde Z^c)
\]
and, because $\tilde Z^c$ is also a lift of $Y^c \subset \pi(\gamma)$,
\[
 \ell_{\pi(\gamma)}(\gamma \cap Y^c) = \sum_{i=1}^n \ell(\partial g(\tilde \gamma_i) \cap \tilde Z^c)
\]

Note that each intersection of $\gamma$ with $\M$ corresponds to at most two lifts $\tilde \gamma_i$ and $\tilde \gamma_j$. Thus, the number of lifts is bounded above by $n \leq 2 i(\gamma, \M)$. So summing the inequalities in Equation (\ref{eq:Length_change_for_crossing}) and (\ref{eq:length_change_in_C}) (depending on whether $\tilde \gamma_i$ intersects a lift of $\partial Z$ or not) gives 
\[
 \ell_X(\gamma \cap Z^c) - \ell_{\pi(\gamma)}(\gamma \cap Y^c) \prec i(\gamma, \M) 
\]
\end{proof}

\subsection{Length on $Z^{bd}$}
\label{subsec:comparison_on_Yth}
Now that we have estimated the length of $\gamma$ outside of $Z^{bd}$, we prove the following:
\begin{lem}
\label{lem:comparison_on_Yth}
 We can estimate the length of the $X$-geodesic representative of $\gamma$ in $Z^{bd}$ by
 \[
  \ell_{X}(\gamma \cap Z^{bd}) \prec i(\gamma, \M)
 \]
 where the constant depends only on $\chi(S)$.
\end{lem}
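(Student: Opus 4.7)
The plan is to exploit the fact that, in the metric $X$, the subsurface $Z^{bd}$ has bounded geometry. By our choice of Fenchel--Nielsen coordinates on $X$, each pants curve $\delta_i\in\M$ has $\ell_X(\delta_i)=1$ with twist coordinate $0$ with respect to $\M$, so every transverse curve of $\M$ also has $X$-length $\asymp 1$. For each $\alpha\in\partial Z$, the boundary hypercycle $\alpha^{bd}\subset\partial Z^{bd}$ has $X$-length equal to $\ell_X(\alpha)\coth(\ell_X(\alpha)/2)$, which is $\asymp 1$ since $\ell_X(\alpha)=\ell_{\pi(\gamma)}(\alpha)<c_b$. Finally, applying the collar lemma to the $\delta_i$'s in $X$ forces every essential non-peripheral simple closed curve of $Z^{bd}$ to have $X$-length $\succ 1$. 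The strategy is to use these bounded-geometry features to force each geodesic arc of $\gamma\cap Z^{bd}$ to accumulate $X$-length roughly at the rate at which it crosses $\M$.

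First I would verify that cutting $Z^{bd}$ along $\M$ yields complementary regions of $X$-diameter bounded by some $D_0=D_0(\chi(S))$. For disk components this is immediate from the bounded lengths of their boundary curves. For peripheral sub-annuli between a hypercycle $\alpha^{bd}$ and a loop of $\M$-arcs homotopic to $\alpha$, one checks using the hypercycle length formula that the $\M$-loop lies at uniformly bounded $X$-distance from $\alpha^{bd}$, so the sub-annulus also has bounded $X$-diameter.

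Next, let $\sigma_1,\dots,\sigma_N$ be the connected components of $\gamma\cap Z^{bd}$, each a geodesic arc of $X$ with endpoints on $\partial Z^{bd}$. Between any two consecutive crossings of $\sigma_j$ with $\M$, or between an endpoint of $\sigma_j$ and the nearest such crossing, the arc stays in a single complementary region and contributes at most $D_0$ to $\ell_X(\sigma_j)$. Summing yields
\[
 \ell_X(\gamma\cap Z^{bd}) \leq D_0\bigl(i(\gamma,\M)+N\bigr).
\]

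Finally, I would bound $N\prec i(\gamma,\M)$. Each endpoint of $\sigma_j$ is a transverse intersection of $\gamma$ with a boundary hypercycle $\alpha^{bd}$. Using convexity of the one-sided collar of $\alpha$ in the universal cover of $X$, each such endpoint is paired either with a genuine crossing of $\gamma$ with $\partial Z$ (thus counted by $i(\gamma,\partial Z)$) or with a near-tangent dip of $\gamma$ in the collar of $\alpha$ that stays in the peripheral sub-annular region. Since $\M$ fills $Y$ up to peripheral annuli, both types of endpoints are ultimately controlled by $i(\gamma,\M)$, and substituting $N\prec i(\gamma,\M)$ into the inequality above gives the desired bound $\ell_X(\gamma\cap Z^{bd})\prec i(\gamma,\M)$. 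The main obstacle is this last step: detour arcs whose two endpoints lie on the same hypercycle $\alpha^{bd}$ can a priori be numerous, and one must carefully account for them---either showing they are short in bulk or else linking them to crossings of $\M$ via a detailed geometric argument in the peripheral sub-annulus.
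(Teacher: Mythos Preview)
Your outline correctly identifies the bounded-geometry features of $Z^{bd}$ in $X$, and the overall shape---cut $\gamma\cap Z^{bd}$ at its crossings with $\M$ and bound each piece---matches the paper's. But there is a genuine gap in the step where you assert that each sub-arc ``stays in a single complementary region and contributes at most $D_0$.'' For the peripheral sub-annuli this does not follow: bounded diameter does \emph{not} bound the length of a geodesic arc in an annular region, since such an arc can wind around the core many times. This is precisely the difficulty the paper isolates (``A priori, $\gamma_i$ could twist around $R$ multiple times'') and resolves with a hyperbolic-geometry computation in the universal cover. Lifting so that $\tilde\alpha$ is the imaginary axis and the collar boundary is the ray $\tilde\alpha_\theta$, the paper shows that the orthogonal projection to $\tilde\alpha$ of the relevant component of $\tilde\gamma\setminus\tilde D(\alpha)$ has length at most $\ell_X(\alpha)$; hence one can choose a perpendicular from $\partial R\cap\M$ to $\alpha$ disjoint from $\gamma_i$, cutting $R$ into a simply connected region of bounded diameter containing $\gamma_i$. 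Your proposal has no substitute for this no-winding argument.

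By contrast, what you flag as ``the main obstacle''---bounding the number $N$ of components of $\gamma\cap Z^{bd}$---is actually cheap once the above is in hand. A component disjoint from $\M$ would lie entirely in a single peripheral sub-annulus with both endpoints on the same hypercycle $\alpha^{bd}$; but the half-collar $\{p:d(p,\tilde\alpha)\le\Col_X(\alpha)\}$ is convex, so the geodesic chord between two points of $\tilde\alpha^{bd}$ lies on the collar side, not in $Z^{bd}$. Hence every component meets $\M$, giving $N\le i(\gamma,\M)$ immediately. The paper in fact bypasses this count and works directly with the $\prec i(\gamma,\M)$ sub-arcs; the real work is the winding bound, not the endpoint bookkeeping you focused on.
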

\begin{proof}
We have that $\M$ cuts $\gamma \cap Z^{bd}$ into arcs $\gamma_1, \dots, \gamma_k$, for $k = i(\gamma, \M)$. We will show that each arc has length bounded above by a constant depending only on $\chi(S)$. 

Each arc $\gamma_i$ lies in some complementary region $R$ of $\M$ in $Z^{bd}$. First, suppose that $R$ is simply connected. The length of $\partial R$ is bounded above by $2 \sum \ell_X(\delta_i)$, where we sum over all $\delta_i \in \M$. In other words, it is bounded above by a constant depending only on $\chi(S)$. Then by the isoperimetric inequality, the diameter of $R$ is also bounded above in terms of the length of its boundary. As $\gamma_i$ is a geodesic arc joining two points on the boundary of $R$, this means that $\ell_X(\gamma_i) \prec 1$, for constants depending only on $\chi(S)$.

\begin{figure}[h!]
 \centering 
 \includegraphics{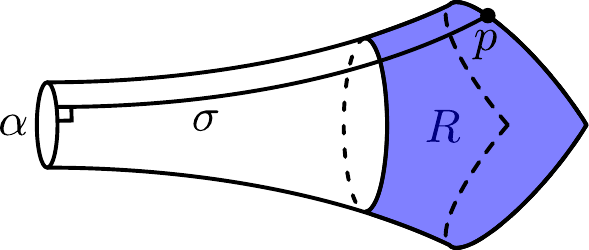}
 \caption{}
 \label{fig:Bounded_diameter_length_cyl_reg}
\end{figure}

Now suppose $R$ is not simply connected (Figure \ref{fig:Bounded_diameter_length_cyl_reg}). That means $R$ is a cylindrical region whose core curve is homotopic to some geodesic $\alpha$ on $\partial Z$. A priori, $\gamma_i$ could twist around $R$ multiple times. We must show that this does not occur. Let $p$ be a point on $\partial R$ that lies on some curve in $\M$. Drop a perpendicular $\sigma$ from $p$ to $\alpha$. We will choose $\sigma$ more carefully later. As $Z^{bd}$ has bounded diameter, we have that $\ell_X(\sigma \cap R) \prec 1$. So cutting $R$ along $\sigma$ gives a simply connected region of bounded diameter. 

We will show that we can choose a $\sigma$ that is disjoint from $\gamma_i$. Thus $\gamma_i$ will again be a geodesic arc crossing a simply connected region of bounded diameter.  This will give that $\ell_X(\gamma_i) \prec 1$ with constants depending only on $\chi(S)$.

\begin{figure}[h!]
 \centering 
 \includegraphics{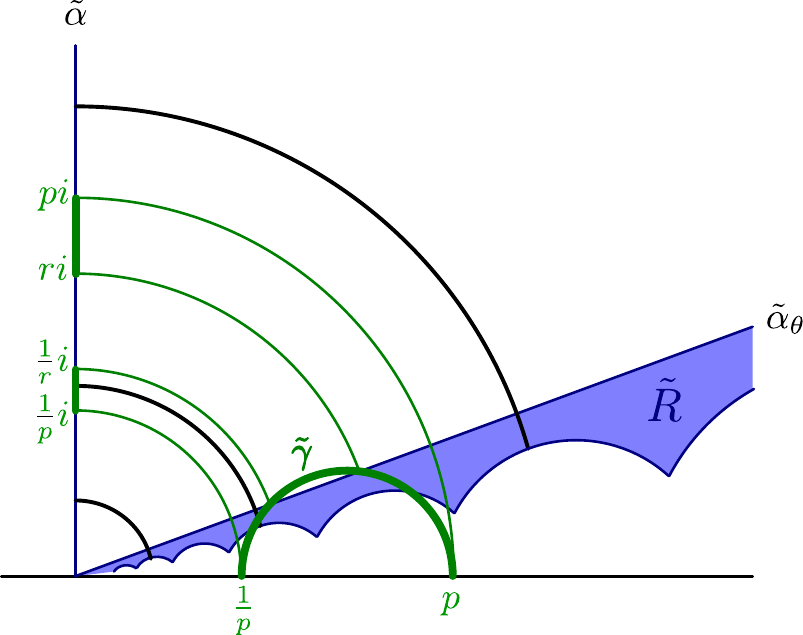}
 \caption{We bound the length of the intersection between $\tilde \gamma$ and $\tilde R$.}
 \label{fig:Bounded_diameter_length_intersection_estimate}
\end{figure}

We find $\sigma$ as follows. Consider the universal cover $\h$ of $X$, which we identify with the upper half plane (Figure \ref{fig:Complement_length_subsurface_lift}). Again, suppose $\alpha$ lifts to the imaginary axis, which we again denote $\tilde \alpha$. The collar neighborhood $D(\alpha)$ of $\alpha$ in $Z$ lifts to a region $\tilde D(\alpha)$ bounded on one side by $\tilde \alpha$, and on the other side by the ray $\tilde \alpha_\theta$ out of the origin making an angle $\theta$ with the real axis. Again, as the collar neighborhood has width $\Col_X(\alpha)$, we show in Lemma \ref{lem:Hyperbolic_geometry} that 
\[
 \tan \frac{\theta}{2} = e^{-\Col_X(\alpha)}
\]

Then $R$ lifts to a region $\tilde R$ bounded on one side by $\tilde \alpha_\theta$, and on the other by a piecewise geodesic curve where each point lies on the lift of some curve in $M_\gamma$ (Figure \ref{fig:Bounded_diameter_length_intersection_estimate}).

Let $\tilde \gamma$ be a lift of $\gamma$ so that a connected component of $\tilde \gamma \cap \tilde R$ projects down to $\gamma_i$. Let $\tilde \sigma$ be the set of all lifts of $\sigma$ to $\h$ (these are the black arcs in Figure \ref{fig:Bounded_diameter_length_intersection_estimate}). Then the number of intersections between $\gamma_i$ and $\sigma$ is bounded above by the number of lifts of $\sigma$ that intersect $\tilde \gamma \setminus \tilde D(\alpha)$. In fact, if $\tilde \gamma \cap \tilde R$ has two connected components, then we can ignore not just the intersects between $\tilde \gamma$ and $\tilde \sigma$ inside $\tilde D(\alpha)$, but also the intersects with the ``wrong'' connected component of $\tilde \gamma \cap \tilde R$.  

Consider the projection $P'$ of $\tilde \gamma \setminus \tilde D(\alpha)$ to $\tilde \alpha$. Then $P'$ is either one segment on the imaginary axis, or two, depending on whether $\tilde \gamma \cap \tilde R$ consists of one or two components, respectively. In Figure \ref{fig:Bounded_diameter_length_intersection_estimate}, $P'$ is the union of the two green segments on the imaginary axis. If $\tilde \gamma \cap \tilde R$ consists of two components, then let $\tilde \gamma_i$ be the component that is the lift of $\gamma_i$. Then let $P$ be the segment of $P'$ that contains the projection of $\tilde \gamma_i$. If $P'$ is a single segment, set $P = P'$.

If a lift of $\sigma$ intersects $\tilde \gamma_i$, then it intersects $P$. So we bound $\#P \cap \tilde \sigma$. We know that the distance between endpoints of lifts of $\sigma$ on $\tilde \alpha$ is exactly $\ell(\alpha)$ (where we suppress the dependence on the metric, as $\ell_{\pi(\gamma)}(\alpha) = \ell_X(\alpha)$). We will show that the length of $P$ is at most $\ell(\alpha)$. Then there will be a choice of $\sigma$ for which $\tilde \sigma$ is disjoint from the interior of $P$. Thus, we will be able to choose $\sigma$ disjoint from $\gamma_i$.

We first show that, without loss of generality, $\tilde \gamma$ is tangent to $\tilde \alpha_\theta$. As $\tilde \alpha$ and $\tilde \alpha_\theta$ are invariant under scaling, the length of $P$ will not change if we scale $\tilde \gamma$. So we can assume without loss of generality that the endpoints of $\tilde \gamma$ on $\R$ are at $p > 1$ and $\frac 1 p$. If $\tilde \gamma$ does not intersect $\tilde \alpha_\theta$, then $P = [\frac 1p i, pi]$, where $[ai, bi]$ denotes the set of points of the form $xi$ for $a \leq x \leq b$. If we increase $p$ until $\tilde \gamma$ is tangent to $\tilde \alpha_\theta$, then the length of $P$ will increase. So if $\tilde \gamma$ is disjoint from $\tilde \alpha_\theta$, we can assume it is tangent to $\tilde \alpha_\theta$.

On the other hand, suppose $\tilde \gamma$ intersects $\tilde \alpha_\theta$ at points $re^{i \theta}$ and, by symmetry, $\frac 1r e^{i \theta}$, for $r \geq 1$. Then $P' = [\frac 1p i, \frac 1r i] \cup [ri, pi]$, and, as the two segments are the same length, we can assume that
\[
 P = [ri, pi]
\]

Consider a family of curves with endpoints at $p$ and $q_t$ for $t \in [0,1]$, so that $q_0 = \frac 1p$, and the geodesic $\tilde \gamma_1$ with endpoints $q_1$ and $p$ is tangent to $\tilde \alpha_\theta$. We can choose $q_t$ to be continuously increasing in $t$. Let $\tilde \gamma_t$ be the geodesic with endpoints $q_t$ and $p$. Suppose it intersects $\tilde \alpha_\theta$ at $s_te^{i\theta}$ and $r_t e^{i \theta}$ with $s_t < r_t$. Since $q_t$ is increasing, for $t_1 < t_2$ we have $\tilde \gamma_{t_2}$ lies under $\tilde \gamma_{t_1}$. So we must have $r_{t_2} < r_{t_1}$. 

Let $P_t = [r_ti, pi]$. Then the length of $P_t$ increases as $t$ goes from 0 to 1. In particular, as $r_t$ is continuous in $t$, the length of $P_t$ must be maximized at $\tilde \gamma_1$, which is tangent to $\tilde \alpha_\theta$. Note that for all $t < 1$, $P_t$ is a connected component of the projection of $\tilde \gamma_t \setminus \tilde D(\alpha)$ to the imaginary axis. However, at $t = 1$, $P_1$ is only contained in this projection. Still, we only want an upper bound on the length of the projection, and so we can assume without loss of generality that $\tilde \gamma$ is tangent to $\tilde \alpha_\theta$.

We continue to assume that the endpoints of $\tilde \gamma$ on $\R$ are at $p$ and $1/p$. Since it is tangent to $\tilde \alpha_\theta$, it must meet $\tilde \alpha_\theta$ at $e^{i \theta}$.

We can then use Equation (\ref{eq:circle_equality}) in the proof of Lemma \ref{lem:comparison_on_Yc} with $r = 1$ to get
\[
 \frac{\frac 1p + p}{2} = \sec \theta
\]
Take $x>0$ so that $p = e^x$. Then we see 
\[
 \sec \theta = \cosh x
\]
But we show in Lemma \ref{lem:Hyperbolic_geometry} that $\sec(\theta) = \cosh\left (\frac{\ell(\alpha)}{2}\right)$. So as $x > 0$, this means 
\[
 x = \frac{\ell(\alpha)}{2}
\]

Now, $d(\frac 1p i, pi) = 2 \ln p = 2x$. In particular, the length of $P$ is 
\[
 d(\frac 1p i, pi) < \ell(\alpha)
\]

The distance between successive lifts of $\sigma$ is $\ell(\alpha)$. As we argued above, this means we can choose $\sigma$ that is disjoint from $\gamma_i$. Since $\sigma \cap R$ has length bounded above only in terms of $\chi(S)$, we see that $\gamma_i$ has length bounded above in terms of $\chi(S)$.

Since all of the arcs $\gamma_1, \dots, \gamma_k$ have length bounded in terms of $\chi(S)$, and $k = i (\gamma, \M)$, we get our result.
\end{proof}

\section{Thick-Thin Theorem}
\label{sec:Full_Thick_Thin}

To prove the main theorem, we will work with a filling current $\mu$, and its length minimizing metric $\pi(\mu)$. Given a thick component $Y$ of $\pi(\mu)$, we will consider its hyperbolically shortest marking $\Gamma$. That is, 
\[
 i(\pi(\mu), \Gamma) \asymp 1
\]
where the constant depends only on $\chi(S)$. The following lemma shows that the hyperbolically short marking $\Gamma$ is also $\mu$-short.
\begin{prop}
\label{lem:Hyp_short_marking_bounded_by_mu_systole}
 Let $\mu \in \C_{fill}(S)$, and let $Y$ be a thick component of its length minimizer $\pi(\mu)$. Let $\Gamma$ be the hyperbolically shortest marking on $Y$. Then 
 \[
  i(\mu, \Gamma) \asymp \sys_Y(\mu)
 \]
\end{prop}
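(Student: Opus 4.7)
The lower bound here is essentially tautological: each curve $\gamma_i \in \Gamma$ is an essential, non-peripheral simple closed curve in $Y$, so by the definition of $\sys_Y(\mu)$ we have $i(\mu,\gamma_i) \geq \sys_Y(\mu)$, and summing (or just taking one term) gives $i(\mu,\Gamma) \succ \sys_Y(\mu)$. All the content is in the upper bound.

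For the upper bound, the plan is to exploit two markings on $Y$ simultaneously: the hyperbolically shortest marking $\Gamma$ and the $\mu$-short marking $\M$ produced by Lemma \ref{lem:mu-short_marking}. By Lemma \ref{lem:mu-short_marking}, $i(\mu,\delta) \asymp \sys_Y(\mu)$ for each $\delta \in \M$, and since $|\M|$ is bounded in terms of $\chi(S)$, we obtain $i(\mu,\M) \asymp \sys_Y(\mu)$. Now I would apply Theorem \ref{thm:Another_Collar} with $\beta = \gamma_i$ (for each $\gamma_i \in \Gamma$) and the marking $\M$, which yields
\[
i(\mu,\gamma_i)\,\Col_{\pi(\mu)}(\gamma_i) \;\prec\; i(\mu,\M) \;\asymp\; \sys_Y(\mu).
\]

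The last step is to convert this into the desired inequality by showing that $\Col_{\pi(\mu)}(\gamma_i) \succ 1$. Because $\Gamma$ is the shortest marking of a \emph{thick} component $Y$ of $\pi(\mu)$, both the pants curves (chosen greedily from the Bers pants decomposition) and the shortest transverse curves have $\pi(\mu)$-length bounded above by a constant depending only on $\chi(S)$. The formula $\Col_{\pi(\mu)}(\gamma_i) = \sinh^{-1}(1/\sinh(\ell_{\pi(\mu)}(\gamma_i)/2))$ is a decreasing function of length, so an upper bound on $\ell_{\pi(\mu)}(\gamma_i)$ translates to a lower bound on $\Col_{\pi(\mu)}(\gamma_i)$ in terms of $\chi(S)$ alone. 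Combining with the previous display gives $i(\mu,\gamma_i) \prec \sys_Y(\mu)$, and summing over the finitely (in terms of $\chi(S)$) many curves of $\Gamma$ produces $i(\mu,\Gamma) \prec \sys_Y(\mu)$.

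There is really no main obstacle here once Theorem \ref{thm:Another_Collar} and Lemma \ref{lem:mu-short_marking} are in hand: the proof is essentially a one-line application of the mixed collar theorem with the witness marking supplied by Lemma \ref{lem:mu-short_marking}. The only place to be slightly careful is to justify the uniform upper bound on $\ell_{\pi(\mu)}(\gamma_i)$ for $\gamma_i \in \Gamma$; this is standard for thick components but deserves an explicit sentence, since it is exactly the hypothesis that lets us invert the collar estimate from $i(\mu,\gamma_i)\Col_{\pi(\mu)}(\gamma_i)$ back to $i(\mu,\gamma_i)$.
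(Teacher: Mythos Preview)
Your proposal is correct and follows essentially the same approach as the paper: invoke the $\mu$-short marking $\M$ from Lemma \ref{lem:mu-short_marking}, apply Theorem \ref{thm:Another_Collar} to each $\gamma_i \in \Gamma$ with this $\M$, and use that $\ell_{\pi(\mu)}(\gamma_i)$ is uniformly bounded on a thick component to get $\Col_{\pi(\mu)}(\gamma_i) \succ 1$. The paper's proof is virtually identical, omitting only the explicit mention of the trivial lower bound.
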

\begin{proof}
Since $Y$ is a thick component of $\pi(\mu)$, let $\M$ be the $\mu$-short marking of $Y$ given by Lemma \ref{lem:mu-short_marking}. By Theorem \ref{thm:Another_Collar}, we have that for any $\alpha$ in $\Gamma$,
\[
 i(\mu, \alpha) \Col_{\pi(\mu)}(\alpha) \prec i(\mu, \M)
\]
But by Lemma \ref{lem:mu-short_marking}, $i(\mu, \M) \prec \sys_Y(\mu)$. Moreover, $Y$ is a thick component, and $\alpha$ is part of the hyperbolically shortest marking on $\pi(\mu)$. Thus, $\Col_{\pi(\mu)}(\alpha)$ is bounded below by a constant depending only on $\chi(S)$. Therefore, 
\[
 i(\mu, \alpha) \prec \sys_Y(\mu)
\]
for constants depending only on $\chi(S)$. Since the number of curves in $\Gamma$ also depends only on $\chi(S)$, summing over all the $\alpha \in \Gamma$ gives us the proposition.
\end{proof}

We are now ready to prove the main theorem, comparing the $\mu$- and $\pi(\mu)$- lengths of simple closed curves in the thick part of $\pi(\mu)$.
\begin{MainThm}
 Let $\pi(\mu)$ be the length minimizer of $\mu \in \C_{fill}(S)$. Let $Y$ be a thick component of $\pi(\mu)$. Then for any essential simple closed curve $\alpha$ in $Y$, we have 
 \[
\frac{i(\mu, \alpha)}{i(\pi(\mu), \alpha)} \asymp \sys_Y(\mu)
 \]
\end{MainThm}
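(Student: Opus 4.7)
The plan is to combine the two key results already proved earlier in the paper: Proposition \ref{prop:Thick-thin_without_collar_lemma} and Proposition \ref{lem:Hyp_short_marking_bounded_by_mu_systole}. Let $\Gamma$ denote the hyperbolically shortest marking on the thick component $Y$. Proposition \ref{prop:Thick-thin_without_collar_lemma} already establishes the two-sided comparison
\[
 \frac{\sys_Y(\mu)^2}{i(\mu, \Gamma)} \prec \frac{i(\mu, \alpha)}{i(\pi(\mu), \alpha)} \prec i(\mu, \Gamma),
\]
with constants depending only on $\chi(S)$. The remaining task is therefore to eliminate the intermediate quantity $i(\mu, \Gamma)$ in favor of $\sys_Y(\mu)$, and this is precisely the content of Proposition \ref{lem:Hyp_short_marking_bounded_by_mu_systole}, which asserts that $i(\mu, \Gamma) \asymp \sys_Y(\mu)$.

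With both inputs in hand, the argument is a one-line substitution. Plugging $i(\mu,\Gamma) \asymp \sys_Y(\mu)$ into the upper bound of Proposition \ref{prop:Thick-thin_without_collar_lemma} immediately gives
\[
 \frac{i(\mu, \alpha)}{i(\pi(\mu), \alpha)} \prec \sys_Y(\mu).
\]
For the lower bound, the same substitution turns $\sys_Y(\mu)^2 / i(\mu,\Gamma)$ into a quantity comparable to $\sys_Y(\mu)^2 / \sys_Y(\mu) = \sys_Y(\mu)$, yielding
\[
 \sys_Y(\mu) \prec \frac{i(\mu, \alpha)}{i(\pi(\mu), \alpha)}.
\]
Combining the two bounds yields the desired $\asymp$ relation, and all constants depend only on $\chi(S)$ because this is the case for both input propositions.

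Consequently there is no real obstacle at this final stage; all of the work has been front-loaded into establishing the two propositions. The genuinely hard input is Proposition \ref{lem:Hyp_short_marking_bounded_by_mu_systole}, which in turn rests on the mixed collar lemma (Theorem \ref{thm:Full_Collar}) and the auxiliary collar-type estimate (Theorem \ref{thm:Another_Collar}) comparing $\mu$-lengths of curves to intersections with a marking. Once those mixed-collar results have been deployed to control $i(\mu,\Gamma)$ by $\sys_Y(\mu)$, Theorem \ref{thm:Main} drops out by direct substitution into Proposition \ref{prop:Thick-thin_without_collar_lemma}.
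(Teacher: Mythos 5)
Your proposal is correct and is exactly the argument the paper gives: the theorem follows by substituting $i(\mu,\Gamma) \asymp \sys_Y(\mu)$ from Proposition \ref{lem:Hyp_short_marking_bounded_by_mu_systole} into the two-sided bound of Proposition \ref{prop:Thick-thin_without_collar_lemma}. Nothing further is needed.
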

\begin{proof}
Let $\Gamma$ be a hyperbolically shortest marking of $Y$. By Proposition \ref{prop:Thick-thin_without_collar_lemma}, we have 
 \[
  \frac{\sys_Y(\mu)^2} {i(\mu, \Gamma)} \prec \frac{i(\mu, \alpha)}{i(\pi(\mu), \alpha)} \prec i(\mu, \Gamma)
 \]
But by Proposition \ref{lem:Hyp_short_marking_bounded_by_mu_systole}, we have $i(\mu, \Gamma) \asymp \sys_Y(\mu)$. Therefore, the theorem follows immediately.
\end{proof}

\section{When curves are short}
\label{sec:When_curves_short}
We can use our collar theorems to characterize when curves in $\pi(\mu)$ are short for any $\mu \in \C_{fill}(S)$. First we prove
\begin{ThmShortCurve}
  For every $\epsilon > 0$, there are constants $N_1, N_2$ so that for any $\mu \in PC_{fill}(S)$, and any simple closed curve $\alpha$,
 \begin{enumerate}
  \item If $\ell_{\pi(\mu)}(\alpha) < \epsilon$, then for any simple closed curve $\beta$ with $i(\alpha, \beta) < 1$,
  \[
   i(\mu, \beta) > N_1 i(\mu, \alpha)
  \]
\item If for any simple closed curve $\beta$ with $i(\alpha, \beta) < 1$,
  \[
   i(\mu, \beta) > N_2 i(\mu, \alpha)
  \]
  then $\ell_{\pi(\mu)}(\alpha) < \epsilon$.
 \end{enumerate}
\end{ThmShortCurve}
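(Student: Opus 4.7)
The proof splits into two implications, proved by rather different methods. I plan to handle Part~(2) directly via Theorem~\ref{thm:Main}, and Part~(1) by combining the classical collar estimate with Theorem~\ref{thm:Another_Collar}.

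For Part~(2), I would argue by contrapositive: suppose $\ell_{\pi(\mu)}(\alpha) \geq \epsilon$ and exhibit $\beta$ with $i(\alpha,\beta) \geq 1$ and bounded ratio $i(\mu,\beta)/i(\mu,\alpha)$. When $\alpha$ is essential and non-peripheral in some thick component $Y$ of $\pi(\mu)$, choose $\beta$ to be a transversal of $\alpha$ from the shortest marking of $Y$, so $\ell_{\pi(\mu)}(\beta) \asymp 1$ and $\beta$ lies in a thick component $Y_\beta$. Two applications of Theorem~\ref{thm:Main}---one to $\alpha$ and one to $\beta$---yield
\[
\frac{i(\mu,\beta)}{i(\mu,\alpha)} \asymp \frac{\ell_{\pi(\mu)}(\beta)}{\ell_{\pi(\mu)}(\alpha)} \cdot \frac{\sys_{Y_\beta}(\mu)}{\sys_{Y}(\mu)} \prec \frac{1}{\epsilon},
\]
where by choosing $\beta$ inside $Y$ one arranges $Y_\beta = Y$ (possible whenever $Y$ has positive complexity, automatic since it admits $\alpha$ with a transversal). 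The residual case where $\alpha$ is already Bers-short with $\ell_{\pi(\mu)}(\alpha) \in [\epsilon, c_b)$ is handled analogously: pick $\beta$ in an adjacent thick component, apply Theorem~\ref{thm:Main} to $\beta$, and control $i(\mu, \alpha)$ from above via Theorem~\ref{thm:Another_Collar}. Together these give $N_2 \sim 1/\epsilon$.

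For Part~(1), the plan is to combine the classical collar lemma with Theorem~\ref{thm:Another_Collar}. Since $\ell_{\pi(\mu)}(\alpha) < \epsilon$, the collar width $\Col_{\pi(\mu)}(\alpha)$ is at least $\sinh^{-1}(1/\sinh(\epsilon/2))$, which grows like $\log(1/\epsilon)$ as $\epsilon \to 0$. Take $Y$ to be a subsurface containing $\alpha$ in its interior and with Bers-short boundary---for example, the union of the (at most two) thick components of $\pi(\mu)$ adjacent to $\alpha$ together with the collar of $\alpha$. Applying Theorem~\ref{thm:Another_Collar} with its $\beta$ played by our $\alpha$ gives
\[
i(\mu,\alpha)\,\Col_{\pi(\mu)}(\alpha) \prec i(\mu,\M)
\]
for any marking $\M$ of $Y$. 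I would build $\M$ as the union of $\mu$-short markings (Lemma~\ref{lem:mu-short_marking}) of the thick components comprising $Y$, augmented by a short transversal to $\alpha$, so each $\gamma \in \M$ lying in thick component $Y_\gamma$ satisfies $i(\mu,\gamma) \asymp \sys_{Y_\gamma}(\mu)$. Since $\beta$ meets $\alpha$ it contains an essential arc in at least one thick component $Y_0$ adjacent to $\alpha$; a closing-up argument as in Lemma~\ref{lem:Upper_bound_two_curve_comparison} produces an essential closed curve $\sigma \subset Y_0$ with $i(\mu,\sigma) \prec i(\mu,\beta) + i(\mu,\M)$, so $\sys_{Y_0}(\mu) \prec i(\mu,\beta) + i(\mu,\M)$. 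Rearranging and absorbing yields $i(\mu,\M) \prec i(\mu,\beta)$, and combining with the Another-Collar estimate gives $i(\mu,\beta) \succ \Col_{\pi(\mu)}(\alpha)\,i(\mu,\alpha)$, establishing $N_1 \sim \log(1/\epsilon)$ in agreement with the remark.

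The main obstacle I anticipate is the closing-up step in Part~(1): bounding the $\mu$-intersection of the auxiliary arcs needed to close the piece of $\beta$ in $Y_0$ into an essential closed curve, without the $\mu$-mass of $\beta$ being cancelled by that of the closing arcs. A related subtlety is that the transversal to $\alpha$ in $\M$ has $\pi(\mu)$-length comparable to $\Col_{\pi(\mu)}(\alpha)$ and so is not hyperbolically short; controlling its $\mu$-intersection by $\sys_Y(\mu)$ will require Theorem~\ref{thm:Full_Collar}. A final minor technicality is verifying that the subsurface $Y$ defined above really has Bers-short boundary so that Theorem~\ref{thm:Another_Collar} applies, which should follow from the fact that we adjoin only the collar of $\alpha$, itself disjoint from all other Bers curves.
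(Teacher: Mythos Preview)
Your Part~(1) plan is close in spirit to the paper's, but the ``rearranging and absorbing'' step does not go through as written. From $\sys_{Y_0}(\mu)\prec i(\mu,\beta)+i(\mu,\M)$ and $i(\mu,\M)\asymp\sum_j\sys_{Y_j}(\mu)$ you cannot conclude $i(\mu,\M)\prec i(\mu,\beta)$: the implicit constant on the $i(\mu,\M)$ term has no reason to be less than $1$, and $i(\mu,\M)$ may be dominated by $\sys_{Y_1}(\mu)$ for a different component $Y_1$ that your closing-up argument never sees. The paper avoids this entirely by a simpler choice of marking: take $Y$ to be the smallest union of thick pieces containing \emph{both} $\alpha$ and $\beta$, and build $\M$ starting with $\gamma_0=\alpha$ and $\gamma_1=\beta$ (so $\beta$ is literally the transversal to $\alpha$ in $\M$). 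The inductive construction of Lemma~\ref{lem:mu-short_marking} then yields $i(\mu,\gamma)\prec i(\mu,\alpha)+i(\mu,\beta)$ for every $\gamma\in\M$, hence $i(\mu,\M)\prec i(\mu,\alpha)+i(\mu,\beta)$ directly, and Theorem~\ref{thm:Another_Collar} gives $i(\mu,\alpha)\Col_{\pi(\mu)}(\alpha)\prec i(\mu,\alpha)+i(\mu,\beta)$. Since $\Col_{\pi(\mu)}(\alpha)$ is large, the $i(\mu,\alpha)$ on the right is absorbed into the left. No closing-up is needed.

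Your Part~(2) approach via Theorem~\ref{thm:Main} is genuinely different from the paper's, and it has a gap in the residual case. When $\epsilon\le\ell_{\pi(\mu)}(\alpha)<c_b$, the curve $\alpha$ is a Bers curve and lies on the \emph{boundary} of every thick component, so Theorem~\ref{thm:Main} does not apply to $\alpha$; moreover any $\beta$ chosen inside an adjacent thick component is disjoint from $\alpha$ and so does not satisfy $i(\alpha,\beta)\ge 1$, and ``controlling $i(\mu,\alpha)$ from above'' via Theorem~\ref{thm:Another_Collar} is the wrong direction (you need a lower bound on $i(\mu,\alpha)$). The paper sidesteps all of this by working directly with orthogonal arc systems: by Lemma~\ref{lem:Shortest_arc_length_bound} there is an orthogonal arc system $\gamma$ for $\alpha$ with $\ell_{\pi(\mu)}(\gamma)\prec\Col_{\pi(\mu)}(\alpha)+1$, so if $\ell_{\pi(\mu)}(\alpha)\ge\epsilon$ then $\ell_{\pi(\mu)}(\gamma)$ is bounded; then Proposition~\ref{prop:Mixed_Collar_Orthogonal_Arcs} gives $i(\mu,\gamma)<\delta(\ell_{\pi(\mu)}(\gamma))^{-1}i(\mu,\alpha)$ with $\delta$ bounded below, and one closes $\gamma$ into a curve $\beta$ crossing $\alpha$ with $i(\mu,\beta)\le i(\mu,\gamma)+i(\mu,\alpha)$. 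This is uniform in $\alpha$ and needs no case split on whether $\alpha$ is Bers-short.
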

This theorem is a direct consequence of Corollary \ref{cor:Short_curve_part1} and Lemma \ref{lem:Short_curve_guarantee} below.

First, we need an improvement of Lemma \ref{lem:mu-short_marking}, as follows:
\begin{lem}
\label{lem:Thm:short_curve_part1}
Let $\mu \in \C_{fill}(S)$. Then there is some $C>0$ depending only on $\chi(S)$ so that, for any simple closed curve $\alpha$ with $\ell_{\pi(\mu)}(\alpha)<C$, and for any simple closed curve $\beta$ that intersects $\alpha$, we have 
\[
 i(\mu, \alpha) \Col_{\pi(\mu)}(\alpha) \prec i(\mu, \beta)
\]
where all constants depend only on $\chi(S)$.
\end{lem}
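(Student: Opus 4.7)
The plan is to deduce this improvement of Lemma~\ref{lem:mu-short_marking} from Theorem~\ref{thm:Another_Collar}, applied to a carefully chosen subsurface containing $\alpha$ in its interior, combined with a marking of that subsurface built from $\beta$ and the $\mu$-short markings of the thick components adjacent to $\alpha$. The mixed collar theorem (Theorem~\ref{thm:Full_Collar}) is not directly strong enough: reversing the roles of $\alpha$ and $\beta$ there produces a constant of the form $D(\ell_{\pi(\mu)}(\alpha))$, which decays to zero (rather than growing like $\Col_{\pi(\mu)}(\alpha)$) as $\ell_{\pi(\mu)}(\alpha) \to 0$. So the extra factor of $\Col_{\pi(\mu)}(\alpha)$ must come from Theorem~\ref{thm:Another_Collar}.

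First I would set $C = c_b$, the Bers constant, so that $\ell_{\pi(\mu)}(\alpha) < C$ forces $\alpha$ to be a Bers curve of $\pi(\mu)$. Let $Y_1, \dots, Y_m$ be the thick components of $\pi(\mu)$ whose closures contain $\alpha$ (so $m \in \{1,2\}$), and set $Y = \alpha \cup \bigcup_j Y_j$. Then $\alpha$ is essential and non-peripheral in the interior of $Y$, while $\partial Y$ consists of Bers curves distinct from $\alpha$, each of length less than $c_b$. By Theorem~\ref{thm:Another_Collar},
\[
i(\mu, \alpha)\,\Col_{\pi(\mu)}(\alpha) \prec i(\mu, \M)
\]
for any marking $\M$ of $Y$, with constants depending only on $\chi(S)$. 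It therefore suffices to construct a marking $\M$ of $Y$ with $i(\mu, \M) \prec i(\mu, \beta) + i(\mu, \alpha)$.

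To build such a marking, for each $Y_j$ take the $\mu$-short marking $\M_j$ from Lemma~\ref{lem:mu-short_marking}, which satisfies $i(\mu, \M_j) \asymp \sys_{Y_j}(\mu)$. Extend to a marking of $Y$ by adjoining $\alpha$ as a pants curve along with a transverse curve $\sigma$ built from $\beta$: since $\beta$ crosses $\alpha$, some subarc $\bar\beta \subset Y_j$ of $\beta$ has both endpoints on $\alpha$, and closing $\bar\beta$ by a push-off of a subarc of $\alpha$ into $Y_j$ yields a simple closed curve intersecting $\alpha$ minimally. Removing any intersections with curves in $\bigcup_j \M_j$ by the iterated surgery used in the proof of Lemma~\ref{lem:mu-short_marking} (cf.\ \cite[Lemma~4.4]{MZ}) gives a valid transverse curve $\sigma$ with
\[
i(\mu, \sigma) \leq i(\mu, \beta) + i(\mu, \alpha) + \sum_j i(\mu, \M_j).
\]
The same construction performed inside a single $Y_j$ produces an essential, non-peripheral simple closed curve $\gamma_j$ in $Y_j$ with $i(\mu, \gamma_j) \leq i(\mu, \beta) + i(\mu, \alpha)$, so $\sys_{Y_j}(\mu) \leq i(\mu, \beta) + i(\mu, \alpha)$. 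Summing everything yields $i(\mu, \M) \prec i(\mu, \beta) + i(\mu, \alpha)$, and combined with the inequality above this gives
\[
i(\mu, \alpha)\bigl(\Col_{\pi(\mu)}(\alpha) - c'\bigr) \prec i(\mu, \beta)
\]
for a constant $c'$ depending only on $\chi(S)$. Since $\Col_{\pi(\mu)}(\alpha) \to \infty$ as $\ell_{\pi(\mu)}(\alpha) \to 0$, shrinking $C$ so that $\ell_{\pi(\mu)}(\alpha) < C$ forces $\Col_{\pi(\mu)}(\alpha) > 2c'$ lets the $i(\mu,\alpha)$ term on the right be absorbed, proving the lemma.

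The main obstacle I expect is the surgery bookkeeping: ensuring that $\sigma$ remains simple and intersects $\alpha$ in the correct number of points (one or two, depending on whether $\alpha$ is non-separating or separating in $Y$) after the successive surgeries with curves in $\bigcup_j \M_j$, and that the closed-up curve $\gamma_j$ used to bound $\sys_{Y_j}(\mu)$ is essential and non-peripheral in $Y_j$. Essentiality of $\gamma_j$ follows from the fact that $\alpha$ and $\beta$ are in minimal position, so they bound no null-homotopic bigons; non-peripherality requires picking the closing subarc of $\alpha$ so that $\gamma_j$ does not become homotopic to a component of $\partial Y_j$, which can be arranged because $\bar\beta$ traverses $Y_j$ essentially (otherwise $\beta$ could be homotoped off $\alpha$ there).
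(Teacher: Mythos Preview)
Your overall strategy is exactly the paper's: apply Theorem~\ref{thm:Another_Collar} to a subsurface $Y$ with Bers-short boundary in which $\alpha$ is interior, using a marking $\M$ with $i(\mu,\M)\prec i(\mu,\alpha)+i(\mu,\beta)$, and then absorb the $i(\mu,\alpha)$ term because $\Col_{\pi(\mu)}(\alpha)$ is large. The difference, and the gap, is in the choice of $Y$ and the construction of the marking.

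You take $Y$ to be only the thick components adjacent to $\alpha$, and you assert that ``some subarc $\bar\beta\subset Y_j$ of $\beta$ has both endpoints on $\alpha$.'' This can fail. Concretely, suppose $Y_j$ has another Bers curve $\delta$ on its boundary and $\beta$, after crossing $\alpha$, exits $Y_j$ through $\delta$ before returning to $\alpha$: then every component of $\beta\cap Y_j$ that meets $\alpha$ has its other endpoint on $\delta$, not on $\alpha$. (For instance, take $\alpha,\delta$ disjoint separating Bers curves bounding a thick piece $B$ of positive genus, and $\beta$ with $i(\beta,\alpha)=i(\beta,\delta)=2$; then $\beta\cap B$ consists of two arcs, each from $\alpha$ to $\delta$.) In that situation you cannot form the transverse curve $\sigma$ as described, and you cannot build a curve $\gamma_j\subset Y_j$ out of $\bar\beta$ and a subarc of $\alpha$; so the claimed bound $\sys_{Y_j}(\mu)\le i(\mu,\alpha)+i(\mu,\beta)$ is unproven. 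Closing up using $\delta$ instead introduces $i(\mu,\delta)$, which you do not control. The obstacle you flag at the end (essentiality/non-peripherality of $\gamma_j$) is secondary to this existence issue.

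The paper sidesteps this by enlarging $Y$ to the smallest union of thick subsurfaces containing \emph{both} $\alpha$ and $\beta$, so that $\beta$ itself lies in $Y$ and can serve as the transverse curve to $\alpha$. It then never invokes the $\mu$-short markings $\M_j$ of the individual thick pieces at all: instead it reruns the inductive construction from the proof of Lemma~\ref{lem:mu-short_marking} with seed $\gamma_0=\alpha$, $\gamma_1=\beta$, so that each subsequent $\gamma_i$ is built from arcs of earlier $\gamma_j$'s together with an orthogonal arc system whose $\pi(\mu)$-length is bounded (using that $\gamma_{2j}$ for $j\ge1$ is not short). This yields $i(\mu,\M)\prec i(\mu,\alpha)+i(\mu,\beta)$ directly, with no need to compare $\sys_{Y_j}(\mu)$ to $i(\mu,\alpha)+i(\mu,\beta)$. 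Adopting either of these two changes (enlarging $Y$ to contain $\beta$, or building $\M$ inductively from $\alpha,\beta$ rather than from the $\M_j$) would close the gap in your argument.
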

\begin{proof}
 First, note that we can assume that $\beta$ intersects $\alpha$ minimally. If not, we can do surgery on $\beta$ to find a new curve $\beta'$ that does intersect $\alpha$ minimally, and for which $i(\mu, \beta') < i(\mu, \beta) + i(\mu, \alpha)$. Then, as $\alpha$ is relatively short, $\Col_{\pi(\mu)}(\alpha)$ is large enough, so that if the equation $ i(\mu, \alpha) \Col_{\pi(\mu)}(\alpha) \prec i(\mu, \beta')$ holds for $\beta'$, then it holds for $\beta$ with a different constant.
 
 Let $Y$ be the smallest union of thick subsurfaces of $\pi(\mu)$ that contains $\alpha$ and $\beta$. In other words, the lengths of the boundary components of $Y$ are all bounded above by $c_b$, and if $\gamma$ is an essential simple closed curve in $Y$ disjoint from $\alpha$ and $\beta$, then $\gamma$ has length greater than $c_b'$, where $c_b$ is the Bers constant for pants decompositions, and $c_b'$ is the Bers constant for individual curves.
 
 We then claim that we can find a marking $\M$ for $Y$ where $\alpha$ is a pants curve, and $\beta$ is its transverse curve, so that
 \[
  i(\mu, \M) \prec i(\mu, \alpha) + i(\mu, \beta)
 \]
In fact, the proof of this is the same as the proof of Lemma \ref{lem:mu-short_marking}. In terms of the proof of that lemma, let $\gamma_0 = \alpha$ and $\gamma_1 = \beta$. Then suppose we can find curves $\gamma_0, \dots, \gamma_{2i}$ for which $\gamma_0, \gamma_2, \dots, \gamma_{2i}$ are all disjoint (and part of a future pants decomposition of $Y$), and for all $j = 1, \dots, i-1$, $\gamma_{2j+1}$ is disjoint from $\gamma_{2k}$ for $k \neq j$, and intersects $\gamma_{2j}$ minimally. Moreover, suppose that
\begin{enumerate}
 \item $i(\mu, \gamma_i) \prec i(\mu, \alpha) + i(\mu, \beta)$
 \item the pants curves are not too short: $\ell_{\pi(\mu)}(\gamma_{2j}) > c_b'$ for all $j \geq 1$.
\end{enumerate}
Clearly this holds for $j = 0$. Then the proof of Lemma \ref{lem:mu-short_marking} produces curves $\gamma_{2j+1}$ and $\gamma_{2j+2}$ for which condition 1 holds. This is because the term $i(\mu, \alpha) + i(\mu, \beta)$ takes the the role of $\sys_Y(\mu)$ in the proof of the lemma, and otherwise, those are the only two conditions we use. If $\ell_{\pi(\mu)}(\gamma_{2j+2}) < c_b'$, then $\gamma_{2j+2}$ is a boundary curve of $Y$ (as it cannot be $\alpha$), so we no longer use it in the construction. So we can continue to build the marking $\M$ in this way until we are done.

Now, by Theorem \ref{thm:Another_Collar}, since $Y$ has boundary curves whose lengths are bounded by $c_b$, 
\[
 i(\mu, \alpha) \Col_{\pi(\mu)}(\alpha) \prec i(\mu, \M)
\]
But $i(\mu, \M) \prec i(\mu, \alpha) + i(\mu, \beta)$, and by assumption, $\Col_{\pi(\mu)}(\alpha)$ is large enough so that this implies 
\[
 i(\mu, \alpha) \Col_{\pi(\mu)}(\alpha) \prec i(\mu, \beta)
\]

\end{proof}

As a corollary, we get the first part of Theorem \ref{thm:Short_curves}.
\begin{cor}
\label{cor:Short_curve_part1}
 Let $\mu \in \PC_{fill}(S)$. Let $\alpha$ be any simple closed curve. Then for all $\epsilon >0$ small enough, there is an $N_1 > 0$ so that if $\ell_{\pi(\mu)}(\alpha) < \epsilon$, then for any $\beta$ that intersects $\alpha$, 
 \[
  i(\mu, \beta) > N_1 i(\mu, \alpha)
 \]
\end{cor}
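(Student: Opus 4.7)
The plan is to extract the corollary directly from Lemma \ref{lem:Thm:short_curve_part1} by absorbing the collar-lemma factor $\Col_{\pi(\mu)}(\alpha)$ into the constant $N_1$. Since both sides of the desired inequality scale the same way under $\mu \mapsto t\mu$ and the projection $\pi$ is scale-invariant, the statement is well-defined on $\PC_{fill}(S)$, so I will work with a representative $\mu \in \C_{fill}(S)$.

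First I would fix $\epsilon > 0$ smaller than the universal threshold $C = C(\chi(S))$ supplied by Lemma \ref{lem:Thm:short_curve_part1}. For any simple closed curve $\alpha$ with $\ell_{\pi(\mu)}(\alpha) < \epsilon \leq C$, and any simple closed curve $\beta$ with $i(\alpha,\beta) \geq 1$, the hypothesis of that lemma is satisfied, so
\[
 i(\mu,\alpha)\,\Col_{\pi(\mu)}(\alpha) \prec i(\mu,\beta),
\]
with implicit constant depending only on $\chi(S)$.

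Next I would use that the collar width
\[
 \Col_{\pi(\mu)}(\alpha) = \sinh^{-1}\!\left(\frac{1}{\sinh(\ell_{\pi(\mu)}(\alpha)/2)}\right)
\]
is a strictly decreasing function of $\ell_{\pi(\mu)}(\alpha)$, so $\ell_{\pi(\mu)}(\alpha) < \epsilon$ yields the deterministic lower bound $\Col_{\pi(\mu)}(\alpha) > \sinh^{-1}(1/\sinh(\epsilon/2))$, a quantity depending only on $\epsilon$. Setting
\[
 N_1 := c \cdot \sinh^{-1}\!\left(\frac{1}{\sinh(\epsilon/2)}\right),
\]
where $c = c(\chi(S))$ is the implicit constant from $\prec$, combines with the previous display to give $i(\mu,\beta) > N_1 \, i(\mu,\alpha)$, as required. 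There is no genuine obstacle here: all of the content is packaged in Lemma \ref{lem:Thm:short_curve_part1}, and this corollary simply records the observation that when $\alpha$ is short, its collar is wide, so the factor $\Col_{\pi(\mu)}(\alpha)$ can be replaced by a constant $N_1(\epsilon,\chi(S))$.
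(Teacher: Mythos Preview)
Your argument is correct and is exactly the approach the paper takes: it records the corollary as an immediate consequence of Lemma \ref{lem:Thm:short_curve_part1}, with the remark that $N_1$ is proportional to the collar width of $\alpha$. Your write-up just makes explicit the step of bounding $\Col_{\pi(\mu)}(\alpha)$ below by $\sinh^{-1}(1/\sinh(\epsilon/2))$ once $\ell_{\pi(\mu)}(\alpha) < \epsilon$.
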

Note that $N_1$ in this corollary is proportional to the collar width of $\alpha$.

Next, we prove the other direction of Theorem \ref{thm:Short_curves}. 
To prove that a curve $\alpha$ is short in $\pi(\mu)$, we need to show that the $\mu$-length of every transverse curve $\beta$ is much bigger than the $\mu$-length of $\alpha$.

\begin{lem}
\label{lem:Short_curve_guarantee}
 Let $\mu \in \C_{fill}(S)$. Let $\alpha$ be a simple closed curve. Then for every $\epsilon > 0$, there exists an $N_2 > 0$ so that, if for all curves $\beta$ with $i(\alpha,\beta) > 0$, 
 \[
  i(\mu, \beta) > N_2 i(\mu, \alpha)
 \]
 then $\ell_{\pi(\mu)}(\alpha) < \epsilon$. Note that $N$ depends only on $\epsilon$, and not on $\mu$ or $\alpha$.
\end{lem}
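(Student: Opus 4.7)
The plan is to argue by contrapositive: assuming $\ell_{\pi(\mu)}(\alpha) \geq \epsilon$, I will produce a simple closed curve $\gamma$ with $i(\alpha, \gamma) \geq 1$ and $i(\mu, \gamma) \leq C \cdot i(\mu, \alpha)$ for a constant $C$ depending only on $\epsilon$ and $\chi(S)$. Taking $N_2$ larger than this $C$ then contradicts the hypothesis.

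The first step is to apply Lemma \ref{lem:Shortest_arc_length_bound} to produce an orthogonal arc system $\beta = \{\beta^+, \beta^-\}$ for $\alpha$ with
\[
 \ell_{\pi(\mu)}(\beta) \prec \Col_{\pi(\mu)}(\alpha) + 1.
\]
The crucial observation is that the lower bound $\ell_{\pi(\mu)}(\alpha) \geq \epsilon$ forces the collar width to satisfy $\Col_{\pi(\mu)}(\alpha) \leq \sinh^{-1}(1/\sinh(\epsilon/2))$, so in fact $\ell_{\pi(\mu)}(\beta) \leq L$ for some constant $L = L(\epsilon, \chi(S))$.

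Next I would invoke the mixed collar lemma for an orthogonal arc system (Proposition \ref{prop:Mixed_Collar_Orthogonal_Arcs}). Because $\delta$ is decreasing,
\[
 i(\mu, \alpha) \geq \delta(\ell_{\pi(\mu)}(\beta)) \, i(\mu, \beta) \geq \delta(L) \, i(\mu, \beta),
\]
so $i(\mu, \beta) \leq i(\mu, \alpha)/\delta(L)$. I then build $\gamma$ exactly as in the construction of the transverse curve $\gamma_{2k+1}$ in the proof of Lemma \ref{lem:mu-short_marking}: concatenate $\beta^+$ (together with $\beta^-$, if nonempty) with one or two disjoint sub-arcs $\sigma_1, \sigma_2$ of $\alpha$ to form an embedded piecewise geodesic loop whose geodesic representative $\gamma$ is a simple closed curve satisfying $i(\alpha, \gamma) \geq 1$. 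Subadditivity of $i(\mu, \cdot)$ on disjoint sub-arcs of $\alpha$ gives
\[
 i(\mu, \gamma) \leq i(\mu, \beta) + i(\mu, \sigma_1) + i(\mu, \sigma_2) \leq i(\mu, \beta) + i(\mu, \alpha) \leq \left(1 + \frac{1}{\delta(L)}\right) i(\mu, \alpha),
\]
so setting $N_2 = 2(1 + 1/\delta(L))$ yields the desired contradiction.

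The step that needs the most care is verifying that the concatenation genuinely produces a simple closed curve transverse to $\alpha$; fortunately, this parallels the verification already carried out in the proof of Lemma \ref{lem:mu-short_marking}, since $\beta$ is an orthogonal arc system and the push-off argument relies only on the fact that the endpoints of $\beta^\pm$ lie on prescribed sides of $\alpha$. The essential mechanism driving the proof is that $\pi(\mu)$-long simple closed curves have uniformly bounded collars, hence admit short orthogonal arc systems, which in turn renders Proposition \ref{prop:Mixed_Collar_Orthogonal_Arcs} an effective upper bound on the $\mu$-intersection of some transverse curve.
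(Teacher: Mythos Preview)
Your proof is correct and follows essentially the same approach as the paper: both combine Lemma~\ref{lem:Shortest_arc_length_bound} (to obtain a short orthogonal arc system when $\alpha$ is not short), Proposition~\ref{prop:Mixed_Collar_Orthogonal_Arcs} (to bound $i(\mu,\beta)$ in terms of $i(\mu,\alpha)$), and the arc-to-curve construction from Lemma~\ref{lem:mu-short_marking}. The only cosmetic difference is that you argue by contrapositive, building a transverse curve with small $\mu$-intersection, whereas the paper argues directly, first reducing the curve hypothesis to an arc-system hypothesis and then showing every orthogonal arc system must be hyperbolically long.
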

\begin{proof}
First, we reduce this lemma to the case where $\beta$ is an orthogonal arc system for $\alpha$. Let $\gamma$ be an orthogonal arc system for $\alpha$, and suppose 
\[
 i(\mu, \gamma) < (N-1) i(\mu, \alpha)
\]
By the argument in the proof of Lemma \ref{lem:mu-short_marking}, we can use $\gamma$ to construct a simple closed curve $\beta$ for which 
\[
 i(\mu, \beta) < i(\mu, \gamma) + i(\mu, \alpha)
\]
Thus, 
\[
 i(\mu, \beta) < N i(\mu, \alpha)
\]
In other words, if $i(\mu, \beta) > N i(\mu, \alpha)$ for all curves $\beta$ that intersect $\alpha$, then $i(\mu, \gamma) > (N-1) i(\mu, \alpha)$ for all orthogonal arc systems $\gamma$ for $\alpha$.

In Lemma \ref{lem:Shortest_arc_length_bound}, we show that if $\alpha$ is a simple closed curve, then there exists an orthogonal arc system $\gamma$ with 
\[
 \ell_{\pi(\mu)}(\gamma) \prec \Col_{\pi(\mu)}(\alpha) + 1
\]
where the constants depend only on $S$, and $\Col_{\pi(\mu)}(\alpha) = \sinh^{-1}\left (\frac{1}{\sinh(\ell(\alpha)/2)}\right)$ is the constant from the collar lemma. Note that as the length of $\alpha$ increases, the bound on the right decreases to 1. Thus, given $\epsilon > 0$, there is some $L > 1$ so that if $\ell(\gamma) > L$ for every orthogonal arc system $\gamma$, then $\ell(\alpha) < \epsilon$. 

By Proposition \ref{prop:Mixed_Collar_Orthogonal_Arcs}, there is a decreasing function $\delta : \R_+ \to \R_+$ so that for any orthogonal arc system $\gamma$ for $\alpha$,
 \[
  i(\mu, \alpha) > \delta(\ell_{\pi(\mu)}(\gamma)) i(\mu, \gamma)
 \]
Moreover, $\delta = \delta(x)$ is a decreasing function that goes to 0 as $x$ goes to infinity. In other words, there is some $\delta_0$ so that $\delta(x) < \delta_0$ implies $x > L$, for the constant $L$ above.

So let $N = 1/\delta_0$ and suppose 
\[
 i(\mu, \gamma) > \frac{1}{\delta_0} i(\mu, \alpha)
\]
for all orthogonal arc systems $\gamma$. Then we would have $i(\mu, \alpha) > \frac{\delta(\ell_{\pi(\mu)}(\gamma))}{\delta_0} i(\mu, \alpha)$, or in other words,
\[
 \delta(\ell_{\pi(\mu)}(\gamma)) < \delta_0
\]
This implies that for every orthogonal arc system $\gamma$, $\ell_{\pi(\mu)}(\gamma) > L$, which in turn implies that $\ell_{\pi(\mu)}(\alpha) < \epsilon$.
\end{proof}
Essentially, we show that if $i(\mu, \beta) > N i(\mu, \alpha)$, then every orthogonal arc system for $\alpha$ has very long hyperbolic length.

Once we know which curves are short in $\pi(\mu)$ using Lemma \ref{lem:Short_curve_guarantee}, we can prove that the rest of the surface is thick. We recall Theorem \ref{thm:Thick_part_guarantee}, that shows that if a subsurface has a marking where all curves have roughly equal $\mu$-length, then it is thick in $\pi(\mu)$.
\begin{ThmThickPart}
 Let $Y \subset \pi(\mu)$ be a subsurface so that $\ell_{\pi(\mu)}(\beta) < c_b$ for each boundary component $\beta$ of $Y$, where $c_b$ is the Bers constant. Suppose $\Gamma$ is a marking of $Y$ so that 
 \[
  i(\mu, \gamma) \asymp \sys_Y(\mu)
 \]
for all $\gamma \in \Gamma$. Then for each essential simple closed curve $\alpha$ in $Y$,
\[
 \ell_{\pi(\mu)}(\alpha) \succ 1
\]
where all constants depend only on $S$.
\end{ThmThickPart}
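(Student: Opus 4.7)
The plan is to read this as a direct application of Theorem \ref{thm:Another_Collar}, combined with the definition of the $Y$-systole. The marking $\Gamma$ from the hypothesis will play the role of the marking $\M$ in that theorem.

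First, I would apply Theorem \ref{thm:Another_Collar} with $\M = \Gamma$ and $\beta = \alpha$. The hypothesis that $\ell_{\pi(\mu)}(\beta) < c_b$ for every boundary component $\beta$ of $Y$ is exactly the hypothesis of that theorem, so its conclusion gives
\[
 i(\mu, \alpha)\, \Col_{\pi(\mu)}(\alpha) \prec i(\mu, \Gamma),
\]
with constant depending only on $\chi(S)$. Next, since the number of curves in $\Gamma$ depends only on $\chi(S)$, and by hypothesis each $\gamma \in \Gamma$ satisfies $i(\mu, \gamma) \asymp \sys_Y(\mu)$, summing over $\Gamma$ yields $i(\mu, \Gamma) \asymp \sys_Y(\mu)$. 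On the other hand, $\alpha$ is essential and non-peripheral in $Y$, so by the very definition of the $Y$-systolic length as an infimum over such curves, $i(\mu, \alpha) \geq \sys_Y(\mu)$.

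Combining the three facts above,
\[
 \sys_Y(\mu)\, \Col_{\pi(\mu)}(\alpha) \;\leq\; i(\mu, \alpha)\, \Col_{\pi(\mu)}(\alpha) \;\prec\; i(\mu, \Gamma) \;\prec\; \sys_Y(\mu).
\]
Since $\sys_Y(\mu) > 0$ (as $\mu$ is filling), dividing through gives $\Col_{\pi(\mu)}(\alpha) \prec 1$, with constant depending only on $\chi(S)$.

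Finally, by the defining identity $\Col_{\pi(\mu)}(\alpha) = \sinh^{-1}\bigl(1/\sinh(\ell_{\pi(\mu)}(\alpha)/2)\bigr)$, the collar width is a strictly decreasing function of $\ell_{\pi(\mu)}(\alpha)$ that tends to infinity as $\ell_{\pi(\mu)}(\alpha) \to 0$. Consequently, any uniform upper bound on $\Col_{\pi(\mu)}(\alpha)$ translates into a uniform positive lower bound on $\ell_{\pi(\mu)}(\alpha)$, yielding $\ell_{\pi(\mu)}(\alpha) \succ 1$. There is no real obstacle here: all the substantive work has been done in Theorem \ref{thm:Another_Collar} and in setting up the relationship between the $\mu$-short marking and $\sys_Y(\mu)$, so the argument is a short bookkeeping combination of existing inequalities.
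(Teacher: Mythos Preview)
Your proof is correct and follows essentially the same approach as the paper: apply Theorem \ref{thm:Another_Collar} with $\M=\Gamma$, combine $i(\mu,\Gamma)\asymp\sys_Y(\mu)$ with $i(\mu,\alpha)\ge\sys_Y(\mu)$ to bound $\Col_{\pi(\mu)}(\alpha)$ above, and then invert the collar-width formula to get a lower bound on $\ell_{\pi(\mu)}(\alpha)$. The only addition is that you spell out the summing step and the monotonicity of the collar function, which the paper leaves implicit.
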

\begin{proof}
 Since all the boundary curves of $Y$ have length bounded above by the Bers constant, we can apply Theorem \ref{thm:Another_Collar}, which says that for any essential simple closed curve $\alpha$ in $Y$,
 \[
  i(\mu, \alpha) \Col_{\pi(\mu)}(\alpha) \prec i(\mu, \Gamma)
 \]
 for a constant depending only on $S$.
 
 Since $i(\mu, \Gamma) \asymp \sys_Y(\mu)$, and $i(\mu, \alpha) \geq \sys_Y(\mu)$, by definition, we have that 
 \[
  \Col_{\pi(\mu)}(\alpha) \prec 1
 \]
 But if the collar width of $\alpha$ is bounded above, then the length of $\alpha$ is bounded form below. Thus,
 \[
  \ell_{\pi(\mu)}(\alpha) \succ 1
 \]
\end{proof}

\section{Appendix: Hyperbolic geometry}
\label{sec:Appendix}
In Section \ref{sec:Mixed_length_comparisons}, we needed a few identities from hyperbolic geometry. These are standard computations in hyperbolic geometry, but we briefly prove them here for completeness. 

We return to the setup in Figure \ref{fig:Complement_length_subsurface_lift}. Let $\alpha$ be a simple closed geodesic on a complete hyperbolic surface $X$, and identify the universal cover of $X$ with the hyperbolic plane $\h$ with the upper half-plane model. Let $\tilde \alpha$ be a lift of $\alpha$. Without loss of generality, $\tilde \alpha$ is the imaginary axis. Let $\tilde \alpha_\theta$ be the ray out of the origin making an angle of $\theta$ with the positive real axis. Suppose the distance from each point on $\tilde \alpha_\theta$ to $\tilde \alpha$ is $\Col_X(\alpha)$, for 
\[
 \Col_X(\alpha) = \sinh^{-1}\left (\frac{1}{\sinh(\frac 12 \ell_X(\alpha))} \right )
\]
\begin{lem}
\label{lem:Hyperbolic_geometry}
 We have the following identities:
 \begin{enumerate}
  \item $\tan\left( \frac \theta 2 \right)= e^{-\Col_X(\alpha)}$
  \item $\sin(\theta) = \tanh\left (\frac{\ell(\alpha)}{2}\right)$
  \item $\sec(\theta) = \cosh\left (\frac{\ell(\alpha)}{2}\right)$
 \end{enumerate}
\end{lem}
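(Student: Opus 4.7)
The plan is to prove (1) first, since it directly encodes the defining property of $\Col_X(\alpha)$, and then derive (2) and (3) from (1) by half-angle formulas together with the explicit expression $\sinh(\Col_X(\alpha)) = 1/\sinh(\ell_X(\alpha)/2)$.

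For (1), I would take an arbitrary point $re^{i\theta}$ on $\tilde\alpha_\theta$ and compute its hyperbolic distance to the imaginary axis $\tilde\alpha$ directly. The geodesic through $re^{i\theta}$ meeting $\tilde\alpha$ orthogonally is the Euclidean semicircle of radius $r$ centered at the origin, hitting $\tilde\alpha$ at $ri$. Parameterizing this semicircle as $\phi\mapsto re^{i\phi}$ for $\phi\in[\theta,\pi/2]$ makes the hyperbolic line element collapse to $d\phi/\sin\phi$, and the standard antiderivative gives
\[
d_\h(re^{i\theta}, ri) = \int_\theta^{\pi/2}\frac{d\phi}{\sin\phi} = -\log\tan(\theta/2).
\]
Setting this equal to $\Col_X(\alpha)$ yields identity (1).

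For (2) and (3), writing $C=\Col_X(\alpha)$ and $t=\tan(\theta/2)=e^{-C}$, the half-angle formulas give
\[
\sin\theta = \frac{2t}{1+t^2} = \frac{1}{\cosh C}, \qquad \cos\theta = \frac{1-t^2}{1+t^2} = \tanh C.
\]
Using the defining identity $\sinh C = 1/\sinh(\ell_X(\alpha)/2)$ together with $\cosh^2 C - \sinh^2 C = 1$ gives $\cosh C = \coth(\ell_X(\alpha)/2)$. Substituting this yields $\sin\theta = 1/\cosh C = \tanh(\ell_X(\alpha)/2)$, which is (2), and $\sec\theta = 1/\tanh C = \cosh C/\sinh C = \cosh(\ell_X(\alpha)/2)$, which is (3).

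There is no real obstacle here; the only mildly delicate point is recognizing the Euclidean semicircle through $re^{i\theta}$ centered at the origin as the geodesic perpendicular to $\tilde\alpha$, and verifying that its arclength realizes $d_\h(re^{i\theta},\tilde\alpha)$. Everything else is trigonometric bookkeeping.
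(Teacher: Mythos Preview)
Your proof is correct and follows essentially the same approach as the paper: for (1) you compute the hyperbolic distance from $re^{i\theta}$ to the imaginary axis via the arclength integral $\int_\theta^{\pi/2} d\phi/\sin\phi$, and for (2) and (3) you use half-angle formulas together with the defining relation $\sinh(\Col_X(\alpha)) = 1/\sinh(\ell_X(\alpha)/2)$. The only cosmetic difference is that the paper derives (3) from (2) via $\cos\theta = \sqrt{1-\sin^2\theta}$, whereas you obtain $\cos\theta = \tanh C$ directly from the Weierstrass substitution, which is marginally cleaner.
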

\begin{proof}
 First, consider a point $p =re^{i \theta}$ on $\tilde \alpha_\theta$. The perpendicular from $p$ to $\tilde \alpha$ is the arc $\gamma : [\theta, \pi/2] \to \h$ with 
 \[
  \gamma(t) = re^{i t}
 \]
The length of $\gamma$ is given by 
\[
 \int_{t = \theta}^{\pi/2} \frac{1}{\sin t} dt = -\ln(\tan(\theta/2))
\]
But we know the distance from $p$ to $\tilde \alpha$ is $\Col_X(\alpha)$, and so 
\[
 \tan\left( \frac \theta 2 \right ) = e^{-\Col_X(\alpha)}
\]

Next, we compute $\sin(\theta)$. We have $\sin(\theta) = 2 \sin\left (\frac \theta 2 \right ) \cos \left (\frac \theta 2 \right )$, where the formula for $\tan(\theta/2)$ gives us
\[
 \sin\left( \frac \theta 2 \right ) = \frac{e^{-\Col_X(\alpha)}}{\sqrt{1 + e^{-2\Col_X(\alpha)}}}, \text{ and } \cos\left( \frac \theta 2 \right ) = \frac{1}{\sqrt{1 + e^{-2\Col_X(\alpha)}}}
\]
Thus,
\begin{align*}
 \sin \theta & = \frac{2e^{-\Col_X(\alpha)}}{1 + e^{-2\Col_X(\alpha)}} \\
  &= \frac{2}{e^{\Col_X(\alpha)} + e^{-\Col_X(\alpha)}}\\
  & = \frac{1}{\cosh(\Col_X(\alpha))}
\end{align*}

So we must expand $\cosh(\Col_X(\alpha))$. We use that $\cosh(\sinh^{-1}(x)) = \sqrt{1 +x^2}$, and get 
\begin{align*}
 \cosh(\Col_X(\alpha)) & = 
 \sqrt {
 1 + 
 \frac{1}
 {\sinh^2(\frac{\ell(\alpha)}{2})}
 } \\
 & = \frac{1}{\tanh\left (\frac{\ell(\alpha)}{2}\right)}
\end{align*}

Therefore, 
\[
 \sin(\theta) = \tanh\left (\frac{\ell(\alpha)}{2}\right)
\]

Lastly, we use that if $\sin(x) = a$, then $\cos(x) = \sqrt{1-a^2}$. Then, since $1-\tanh^2(x) = \frac{1}{\cosh^2(x)}$, we get 
\[
 \sec(\theta) = \cosh\left (\frac{\ell(\alpha)}{2}\right)
\]

\end{proof}

 \bibliographystyle{alpha}
  \bibliography{ProjectionProperties2}

\def\cprime{$'$}
\begin{thebibliography}{{Thu}98}

\bibitem[BCLS18]{BCL}
Martin Bridgeman, Richard Canary, Fran\c{c}ois Labourie, and Andres Sambarino.
\newblock Simple root flows for {H}itchin representations.
\newblock {\em Geom. Dedicata}, 192:57--86, 2018.

\bibitem[BIPP19]{BIPP}
M.~Burger, A.~Iozzi, A.~Parreau, and M.~B. Pozzetti.
\newblock Currents, systoles, and compactifications of character varieties,
  2019.

\bibitem[Bon85]{Bonahon85}
Francis Bonahon.
\newblock Structures g{\'e}om{\'e}triques sur les vari{\'e}t{\'e}s de dimension
  3 et applications.
\newblock 1985.

\bibitem[Bus10]{Buser}
Peter Buser.
\newblock {\em Geometry and spectra of compact {R}iemann surfaces}.
\newblock Modern Birkh\"auser Classics. Birkh\"auser Boston, Inc., Boston, MA,
  2010.
\newblock Reprint of the 1992 edition.

\bibitem[DLR10]{DLR10}
Moon Duchin, Christopher~J. Leininger, and Kasra Rafi.
\newblock Length spectra and degeneration of flat metrics.
\newblock {\em Invent. Math.}, 182(2):231--277, 2010.

\bibitem[DM20]{DM20}
Xian Dai and Giuseppe Martone.
\newblock Correlation of the renormalized hilbert length for convex projective
  surfaces, 2020.

\bibitem[Glo17]{Glorieux}
Olivier Glorieux.
\newblock Critical exponent for geodesic currents, 2017.

\bibitem[HS21]{HS21}
Sebastian Hensel and Jenya Sapir.
\newblock A projection from filling currents to teichmüller space, 2021.

\bibitem[Lab07]{Labourie_projection}
Fran\c{c}ois Labourie.
\newblock Flat projective structures on surfaces and cubic holomorphic
  differentials.
\newblock {\em Pure Appl. Math. Q.}, 3(4, Special Issue: In honor of Grigory
  Margulis. Part 1):1057--1099, 2007.

\bibitem[Lof01]{Loftin}
John~C. Loftin.
\newblock Affine spheres and convex {$\Bbb{RP}^n$}-manifolds.
\newblock {\em Amer. J. Math.}, 123(2):255--274, 2001.

\bibitem[Mar97]{Martin}
G.E. Martin.
\newblock {\em The Foundations of Geometry and the Non-Euclidean Plane}.
\newblock Undergraduate Texts in Mathematics. Springer New York, 1997.

\bibitem[Min93]{Minsky93}
Yair~N. Minsky.
\newblock Teichm\"{u}ller geodesics and ends of hyperbolic {$3$}-manifolds.
\newblock {\em Topology}, 32(3):625--647, 1993.

\bibitem[MZ19]{MZ}
Giuseppe Martone and Tengren Zhang.
\newblock Positively ratioed representations.
\newblock {\em Comment. Math. Helv.}, 94(2):273--345, 2019.

\bibitem[Ota90]{Otal90}
Jean-Pierre Otal.
\newblock Le spectre marqu{\'e} des longueurs des surfaces {\`a} courbure
  n{\'e}gative.
\newblock {\em Annals of Mathematics}, 131:151--162, 1990.

\bibitem[PT09]{PT09}
Athanase Papadopoulos and Guillaume Théret.
\newblock Shortening all the simple closed geodesics on surfaces with boundary.
\newblock {\em Proceedings of the American Mathematical Society}, 138, 05 2009.

\bibitem[Raf05]{RafiShortCurves}
Kasra Rafi.
\newblock A characterization of short curves of a {T}eichm\"{u}ller geodesic.
\newblock {\em Geom. Topol.}, 9:179--202, 2005.

\bibitem[Raf07]{RafiTT}
Kasra Rafi.
\newblock Thick-thin decomposition for quadratic differentials.
\newblock {\em Math. Res. Lett.}, 14(2):333--341, 2007.

\bibitem[Tho17]{Tholozan}
Nicolas Tholozan.
\newblock Volume entropy of {H}ilbert metrics and length spectrum of {H}itchin
  representations into {${\rm PSL}(3,\Bbb{R})$}.
\newblock {\em Duke Math. J.}, 166(7):1377--1403, 2017.

\bibitem[{Thu}98]{Thurston88}
William~P. {Thurston}.
\newblock {Minimal stretch maps between hyperbolic surfaces}.
\newblock {\em arXiv Mathematics e-prints}, page math/9801039, January 1998.

\bibitem[Wie19]{Wienhard}
Anna Wienhard.
\newblock An invitation to higher {T}eichm{\"u}ller theory.
\newblock {\em Proceedings of the International Congress of Mathematicians (ICM
  2018)}, 2019.

\end{thebibliography}
\end{document}